\documentclass[12pt,reqno]{amsart}
	\usepackage[centering,text={432pt,624pt}]{geometry}               
	\geometry{letterpaper}                    

\usepackage{caption}
\captionsetup[table]{position=bottom}
\usepackage{diagbox}
\usepackage{graphicx}
\usepackage{tikz}
\usepackage{amssymb}
\usepackage{pdfsync}
\usepackage{hyperref}
\usepackage{verbatim} 
\usepackage{epstopdf}
\usepackage{esint} 
\DeclareGraphicsRule{.tif}{png}{.png}{`convert #1 `dirname #1`/`basename #1 .tif`.png}

\usepackage{amssymb}
\usepackage{latexsym}
\usepackage{amsmath}
\usepackage{bbm}
\usepackage[mathscr]{euscript}
\usepackage{amsthm}

\renewcommand{\d}{\,{\rm d}} 

\newcommand{\sph}[1]{\mathbb{S}^{#1}}

\def\R{\mathbb{R}}

\def\N{\mathbbm{N}}

\def\Z{\mathbb{Z}}
\def\Co{\mathbb{C}}

\newcommand{\te}{\theta}
\newcommand{\la}{\lambda}
\newcommand{\vphi}{\varphi}

\newcommand{\eps}{\varepsilon}
\newcommand{\Mop}{\mathrm{M}}
\newcommand{\Lop}{\mathrm{L}}
\providecommand{\ab}[1]{\vert #1\vert}
\providecommand{\abs}[1]{\Bigl\vert #1 \Bigr\vert}
\providecommand{\Abs}[1]{\biggl\vert #1 \biggr\vert}
\newcommand{\ds}{\displaystyle}

\providecommand{\norma}[1]{\Vert #1 \Vert}

\renewcommand{\leq}{\leqslant}
\renewcommand{\geq}{\geqslant}

\renewcommand{\frak}{\mathfrak}
\newcommand{\EEC}{\partial\mathfrak{U}}

\theoremstyle{plain}
\newtheorem{theorem}{Theorem}[section]
\newtheorem{corollary}[theorem]{Corollary}

\newtheorem{proposition}[theorem]{Proposition}
\newtheorem{lemma}[theorem]{Lemma}

\theoremstyle{definition}
\newtheorem{remark}[theorem]{Remark}

\numberwithin{equation}{section}

\title[Smoothness of solutions of a convolution equation on $\sph{d-1}$]{Smoothness of solutions of a convolution equation of restricted-type on the sphere}

\author{Diogo Oliveira e Silva}
\address{
		Diogo Oliveira e Silva.
        School of Mathematics\\
        University of Birmingham\\
        Edgbaston, Birmingham\\
        B15 2TT, England.}
\email{d.oliveiraesilva@bham.ac.uk}

\author{Ren\'e Quilodr\'an}
\address{Ren\'e Quilodr\'an.}
\email{rquilodr@dim.uchile.cl}

\begin{document}

\subjclass[2010]{35B38, 42B37, 49N60}
\keywords{Sharp Fourier Restriction Theory, convolution of singular measures, Euler--Lagrange equation, smoothness of critical points.}
\begin{abstract}
Let $\mathbb{S}^{d-1}$ denote the unit sphere in Euclidean space $\mathbb{R}^d$, $d\geq 2$, equipped with surface measure $\sigma_{d-1}$. 
An instance of our main result concerns the regularity of solutions of the convolution equation
\[ a\cdot(f\sigma_{d-1})^{\ast {(q-1)}}\big\vert_{\mathbb{S}^{d-1}}=f,\text{ a.e. on }\mathbb{S}^{d-1}, \]
where $a\in C^\infty(\mathbb{S}^{d-1})$,  $q\geq 2(d+1)/(d-1)$ is an integer, and the only {\it a priori} assumption is $f\in L^2(\mathbb{S}^{d-1})$. 
We prove that any such solution belongs to the class $C^\infty(\mathbb{S}^{d-1})$. 
In particular, we show that all critical points associated to the sharp form of the corresponding adjoint Fourier restriction inequality on  $\mathbb{S}^{d-1}$ are $C^\infty$-smooth.
This extends previous work of Christ \& Shao \cite{CS12b} to arbitrary dimensions and general even exponents, and plays a key role in the companion paper \cite{OSQ19}.
\end{abstract}

\maketitle
\setcounter{tocdepth}{1}
\tableofcontents

\section{Introduction}

Sharp Fourier Restriction Theory has attracted a great deal of recent interest.
In the particular case of the unit sphere equipped with surface measure, $(\sph{d-1},\sigma_{d-1})$, a natural starting point is that of the Tomas--Stein inequality,
\begin{equation}\label{eq:TS}
\|\widehat{f\sigma}_{d-1}\|_{L^q(\R^d)}\leq {\bf T}_{d,q} \|f\|_{L^2(\sph{d-1})},
\end{equation}
which is known to hold \cite{St93, To75} with ${\bf T}_{d,q}<\infty$ provided $d\geq 2$ and $q\geq q_d:=2\frac{d+1}{d-1}$; see \eqref{eq:extension} below for the precise definition of the  Fourier extension operator. 
Here  ${\bf T}_{d,q}$ denotes the optimal constant given by
\begin{equation}\label{eq:bestconstant}
{\bf T}_{d,q}=\sup_{{\bf 0}\neq f\in L^2}\frac{\|\widehat{f\sigma}_{d-1}\|_{L^q(\R^d)}}{\|f\|_{L^2(\sph{d-1})}}.
\end{equation}
By a {\it maximizer} of \eqref{eq:TS} we mean a nonzero, complex-valued function $f\in L^2(\sph{d-1})$ for which
$\|\widehat{f\sigma}_{d-1}\|_{L^q(\R^d)}= {\bf T}_{d,q} \|f\|_{L^2(\sph{d-1})}$.

The existence of maximizers for the Tomas--Stein inequality \eqref{eq:TS} has been investigated in the works \cite{CS12a, FVV11, FLS16, Sh16}, but the explicit form of the maximizers is only known in very few, special cases \cite{COS15, Fo15}.
Once maximizers are known to exist, it is natural to investigate their properties with methods from the calculus of variations.
In the present paper, we study the associated Euler--Lagrange equation, and show that the corresponding critical points are $C^\infty$-smooth whenever the exponent $q$ is an even integer.
Our motivation is two-fold.
On the one hand, our main result is used in the companion paper \cite{OSQ19} to establish that constant functions are the unique real-valued maximizers for a number of new sharp instances of inequality \eqref{eq:TS}, and to fully characterize all complex-valued maximizers.
On the other hand, we extend the main results of Christ \& Shao \cite{CS12b} to arbitrary dimensions and general even exponents.

Let $d\geq 2$ and $q\geq q_d$ be given.
Consider the Fourier extension operator $\mathcal E(f)=\widehat{f\sigma}_{d-1}$, acting on functions $f:\sph{d-1}\to\Co$ via
\begin{equation}\label{eq:extension}
\widehat{f\sigma}_{d-1}(x)=\int_{\sph{d-1}} f(\omega) e^{-ix\cdot\omega} \,\textup{d}\sigma_{d-1}(\omega).
\end{equation}
The operator $\mathcal E$ is bounded from $L^2(\sph{d-1})$ to $L^{q}(\R^d)$ in light of \eqref{eq:TS}.
Its adjoint equals the restriction operator, $\mathcal{E}^\ast (g)={g}^\vee\vert_{\sph{d-1}}$, and is bounded from $L^{q'}(\R^d)$ to $L^2(\sph{d-1})$; here, $q'=q/(q-1)$ denotes the conjugate Lebesgue exponent of $q$. 
Suppose that $f$ maximizes the functional $\Phi_{d,q}$ associated to \eqref{eq:TS},
\begin{equation}\label{eq:PhidqDef}
\Phi_{d,q}(f)=\frac{\|\widehat{f\sigma}_{d-1}\|^q_{L^q(\R^d)}}{\|f\|^q_{L^2(\sph{d-1})}},
\end{equation}
and further assume $f$ to be $L^2$-normalized, $\|f\|_{L^2(\sph{d-1})}=1$. We can then estimate the operator norm of the extension operator as follows:
\begin{align}
\|\mathcal{E}\|_{L^2\to L^q}^q
&=\|\mathcal{E} (f)\|_{L^q(\R^d)}^q=\langle |\mathcal{E} (f)|^{q-2}\mathcal{E} (f), \mathcal{E} (f) \rangle
=\langle\mathcal{E}^*(|\mathcal{E} (f)|^{q-2}\mathcal{E} (f)),f\rangle_{L^2(\sph{d-1})}\notag\\
&\leq \|\mathcal{E}^*(|\mathcal{E} (f)|^{q-2}\mathcal{E} (f))\|_{L^2(\sph{d-1})}
\leq \| \mathcal{E}^*\|_{L^{q'}\to L^2} \||\mathcal{E} (f)|^{q-2}\mathcal{E} (f) \|_{L^{q'}(\R^d)}\notag\\
&= \| \mathcal{E}^*\|_{L^{q'}\to L^2} \|\mathcal{E} (f)\|_{L^q(\R^d)}^{q-1}
=\|\mathcal{E}\|_{L^2\to L^q}^q,\label{eq:chain}
\end{align}
where $\langle\cdot,\cdot\rangle$ denotes the $L^{q'}-L^q$ pairing in $\R^d$, and $\langle\cdot,\cdot\rangle_{L^2(\sph{d-1})}$ denotes the $L^2$ pairing on $\sph{d-1}$. Besides easy algebraic manipulations, the first inequality in \eqref{eq:chain} amounts to an application of the Cauchy--Schwarz inequality, 
and  the second inequality in \eqref{eq:chain} holds because the adjoint operator $\mathcal{E}^*$ is bounded from $L^{q'}$ to $L^2$.
In the last identity, we also used the fact that the operator norms of $\mathcal{E},\mathcal{E}^\ast$ coincide, $\|\mathcal{E}\|_{L^2\to L^q}=\|\mathcal{E}^\ast\|_{L^{q'}\to L^2}$. 
Since the first and the last terms in the chain of inequalities \eqref{eq:chain} coincide, all inequalities are forced to be equalities.
In particular, equality holds in the application of the Cauchy--Schwarz inequality, which in turn implies the existence of a constant $\mu$, for which 
$$\mathcal{E}^*(|\mathcal{E} (f)|^{q-2}\mathcal{E} (f))=\mu f$$
holds outside a set of zero $\sigma_{d-1}$-measure.
Thus we see that a maximizer of \eqref{eq:TS} necessarily satisfies 
\begin{equation}\label{eq:ELnonconv}
\Bigl(|\widehat{f\sigma}_{d-1}|^{q-2} \widehat{f\sigma}_{d-1}\Bigr)^{\vee}\Bigl\vert_{\sph{d-1}}
=\lambda \|f\|_{L^2(\sph{d-1})}^{q-2} f,
\quad\sigma_{d-1}\text{-a.e. on }\sph{d-1},
\end{equation}
for some $\lambda\in\Co$.
This is the Euler--Lagrange equation associated to the variational problem \eqref{eq:bestconstant}; see \cite{CQ14} for a more general statement.
To determine the parameter $\lambda\in\Co$, one simply multiplies both sides of \eqref{eq:ELnonconv} by $\bar{f}$ and integrates with respect to surface measure to check that
$\lambda=\Phi_{d,q}(f)$.
In particular, $f$ is a maximizer of inequality  \eqref{eq:TS} if and only if \eqref{eq:ELnonconv} holds with $\lambda={\bf T}_{d,q}^q$.

General non-zero solutions of the Euler--Lagrange equation \eqref{eq:ELnonconv} are called {\it critical points} of the functional $\Phi_{d,q}$.
As noted in \cite{CQ14}, it  follows at once  that constant functions satisfy  \eqref{eq:ELnonconv} for some $\lambda> 0$, simply because $|\widehat{\sigma}_{d-1}|^{q-2}\widehat{\sigma}_{d-1}$ is a radial function,
the inverse Fourier transform of any radial function is radial, and the restriction of any radial function on $\R^d$ to $\sph{d-1}$ is constant.

If $q=2n$ is an even integer, $n\in\N$, then the Tomas--Stein inequality \eqref{eq:TS} can be equivalently stated in convolution form via Plancherel's Theorem as 
\begin{equation}\label{eq:TSconv}
\|(f\sigma_{d-1})^{\ast n}\|^2_{L^2(\R^d)}\leq (2\pi)^{-d}{\bf T}^{2n}_{d,2n} \|f\|^{2n}_{L^2(\sph{d-1})},
\end{equation}
where the $n$-fold convolution measure $(f\sigma_{d-1})^{\ast n}$ is recursively defined for integral values of $n\geq 2$ via
\begin{equation}\label{eq:recsigmaast}
(f\sigma_{d-1})^{\ast 2}=f\sigma_{d-1}\ast f\sigma_{d-1}, \text{ and }
 (f\sigma_{d-1})^{\ast (n+1)}=(f\sigma_{d-1})^{\ast n}\ast f\sigma_{d-1}.
\end{equation}   
The functional $\Phi_{d,2n}$ can then be rewritten as
\begin{equation}\label{eq:PhiConvForm}
\Phi_{d,2n}(f)
=(2\pi)^d\frac{\|(f\sigma_{d-1})^{\ast n}\|_{L^2(\R^d)}^2}{\|f\|_{L^2(\sph{d-1})}^{2n}},
\end{equation}
and the  Euler--Lagrange equation \eqref{eq:ELnonconv} translates into 
\begin{equation}\label{eq:simpleEL}
\Bigl((f\sigma_{d-1})^{\ast n}\ast(f_\star\sigma_{d-1})^{\ast(n-1)}\Bigr) \Big\vert_{\mathbb S^{d-1}}=(2\pi)^{-d}\la \|f\|_{L^2(\sph{d-1})}^{2n-2} f,\quad\sigma_{d-1}\text{-a.e. on }\mathbb S^{d-1},
\end{equation}
where $f_\star$ denotes the {\it conjugate reflection} of $f$ around the origin, defined via 
$$f_\star(\omega)=\overline{f(-\omega)},\quad \text{ for all }\omega\in\sph{d-1}.$$
A function $f:\sph{d-1}\to\Co$ is said to be {\it antipodally symmetric} if $f=f_\star$,
in which case basic properties of the Fourier transform imply that $\widehat{f\sigma}_{d-1}$ is real-valued.

The convolution structure of equation \eqref{eq:simpleEL} induces some extra regularity on its solutions, 
a phenomenon which turns out to hold in greater generality.
To describe it precisely, consider the multilinear operator $\Mop\colon L^2(\mathbb S^{d-1})^{m+1}\to L^2(\mathbb S^{d-1})$,
\begin{equation}\label{eq:definitionMop}
\Mop(f_1,\dotsc,f_{m+1})=(f_1\sigma_{d-1}\ast \dotsm\ast f_{m+1}\sigma_{d-1})\Big\vert_{\mathbb S^{d-1}},
\end{equation}
which is well defined for integral values of $m\geq 4$ if $d=2$ and $m\geq 2$ if $d\geq 3$
 in view of the chain of inequalities \eqref{eq:chain}; see also \cite[Prop.\@ 2.4]{CQ14}.
Further consider the conjugate reflection operator $R\colon L^2(\mathbb S^{d-1})\to L^2(\mathbb S^{d-1})$,  $R(f)=f_\star$. Given an integer
$k\in\N_0$, 
the powers $R^k$ are defined in the usual way via composition, with the understanding that $R^0=\operatorname{Id}$.
We are interested in solutions of the general equation
\begin{equation}\label{eq:generalEL}
a\cdot\Mop(R^{k_1}(f),\dotsc,R^{k_{m+1}}(f))=\la f, \quad\sigma_{d-1}\text{-a.e. on }\sph{d-1},
\end{equation}
where $(k_1,\dotsc,k_{m+1})\in\{0,1\}^{m+1}$, $a\in C^\infty(\sph{d-1})$, and $\la\in\Co$. 
The additional factor $a\in C^\infty(\sph{d-1})$ brings no further complications to the analysis, but can be used to address the smoothness of critical points for weighted measures on $\sph{d-1}$ and, by an additional scaling argument, on ellipsoids. 

Our main result concerns regularity properties of generic solutions of equation \eqref{eq:generalEL}.
\begin{theorem}\label{thm:smoothnessTheorem}
	Let $d\geq 2$, and let $m$ be an integer satisfying
	 $m\geq 4$ if $d=2$, and  $m\geq 2$ if $d\geq 3$.
	Let $(k_1,\dotsc,k_{m+1})\in\{0,1\}^{m+1}$, $a\in C^\infty(\sph{d-1})$, and $\la\in\mathbb{C}\setminus\{0\}$. 
	If $f\in L^2(\sph{d-1})$ is a complex-valued solution of equation \eqref{eq:generalEL}, then $f\in C^\infty(\mathbb S^{d-1})$.
\end{theorem}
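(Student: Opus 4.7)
The plan is a two-stage bootstrap. First I would upgrade the regularity of $f$ from $L^2(\sph{d-1})$ all the way to $L^\infty(\sph{d-1})$, and then upgrade $f$ from $L^\infty(\sph{d-1})$ to $C^\infty(\sph{d-1})$ by an iterated gain of derivatives. The multiplicative weight $a\in C^\infty(\sph{d-1})$ plays essentially no role, since smooth multiplication preserves every function space we consider, and the conjugate reflection $R$ preserves all $L^p$-norms. Rewriting \eqref{eq:generalEL} as
\[
f=\la^{-1}\,a\cdot\Mop(R^{k_1}(f),\dotsc,R^{k_{m+1}}(f)),
\]
it suffices to show that, at each level of regularity under consideration, the operator $\Mop$ maps $(m+1)$ copies of the current space into a strictly better space.

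For the first stage, the key input is the Tomas--Stein inequality \eqref{eq:TS}, which gives $\widehat{f\sigma}_{d-1}\in L^{q_d}(\R^d)$ whenever $f\in L^2(\sph{d-1})$. By H\"older,
\[
\NOrma{\prod_{j=1}^{m+1}\widehat{(R^{k_j}f)\sigma}_{d-1}}_{L^{q_d/(m+1)}(\R^d)}\lesssim\norma{f}_{L^2(\sph{d-1})}^{m+1},
\]
and the hypotheses on $m$ ensure $q_d/(m+1)\leq 2$. Hausdorff--Young then places the $(m+1)$-fold convolution $(R^{k_1}f)\sigma_{d-1}\ast\dotsm\ast(R^{k_{m+1}}f)\sigma_{d-1}$ in some $L^{r}(\R^d)$ with $r>2$. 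A bilinear-type refinement of Tomas--Stein, analogous to the one employed by Christ \& Shao in \cite{CS12b}, converts this global gain on $\R^d$ into an $L^p(\sph{d-1})$-bound for the restriction with some $p>2$. Feeding the improved integrability back into \eqref{eq:generalEL} and iterating, one obtains $f\in L^p(\sph{d-1})$ for every $p<\infty$, and ultimately $f\in L^\infty(\sph{d-1})$.

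For the second stage, stationary phase yields the pointwise bound $|\widehat{f\sigma}_{d-1}(\xi)|\lesssim\norma{f}_\infty(1+|\xi|)^{-(d-1)/2}$. The lower bounds on $m$ are precisely what forces $(m+1)(d-1)/2>d$, so the Fourier-side product of $m+1$ such factors is integrable on $\R^d$, making the convolution continuous there. To extract one derivative of $f$ one multiplies this Fourier-side product by $|\xi|$ and verifies that the result is still integrable; the inverse Fourier transform is then continuous, and restricting to $\sph{d-1}$ together with the tangential structure of differentiation on the sphere places $f$ in $C^1(\sph{d-1})$. Iterating this gain in derivatives yields $f\in C^\infty(\sph{d-1})$.

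The hard part will be the first stage, specifically the step converting integrability gains on $\R^d$ into integrability gains on the hypersurface $\sph{d-1}$. Hausdorff--Young and Tomas--Stein give information about the convolution on $\R^d$, but its trace on a lower-dimensional submanifold is delicate and typically requires either a refined trace inequality or a multilinear argument exploiting the curvature of $\sph{d-1}$ together with the specific structure of $\Mop$. Once $L^\infty$ is reached, the derivative bootstrap of the second stage is essentially routine bookkeeping against the stationary-phase decay of $\widehat{\sigma}_{d-1}$.
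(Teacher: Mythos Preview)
Your second stage contains a circularity that breaks the argument. The pointwise decay $|\widehat{f\sigma}_{d-1}(\xi)|\lesssim \norma{f}_\infty(1+|\xi|)^{-(d-1)/2}$ does \emph{not} follow from $f\in L^\infty(\sph{d-1})$; stationary phase requires the amplitude to be smooth (or at least to have enough derivatives to control the remainder), which is exactly what you are trying to prove. For rough $f$ the Fourier extension can decay much more slowly. Even granting the false decay estimate, your arithmetic claim that $(m+1)(d-1)/2>d$ fails at $(d,m)=(3,2)$, where the product decays like $|\xi|^{-3}$ on $\R^3$, the borderline non-integrable case; multiplying by $|\xi|$ to extract a derivative only makes this worse. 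This is precisely the Christ--Shao endpoint, and one of the boundary cases the paper takes the most care with. Your first stage is not really a proof but a hope: converting $L^r(\R^d)$ information about the convolution into $L^p(\sph{d-1})$ information about its trace is the entire difficulty, and invoking an unspecified ``bilinear-type refinement'' does not address it.

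The paper's route avoids both obstacles. It never passes through $L^p$ for $p>2$; instead it works in H\"older-type spaces $\mathcal H^s(\sph{d-1})$ and exploits the observation that if $\varphi_1,\dots,\varphi_m$ are smooth, the \emph{linear} operator $g\mapsto\Mop(\varphi_1,\dots,\varphi_m,g)$ maps $L^2(\sph{d-1})$ into $\mathcal H^\alpha(\sph{d-1})$ for some $\alpha>0$ independent of the $\varphi_j$ (Corollary~\ref{lem:HboundLop}). One writes $f=\varphi_\eps+g_\eps$ with $\varphi_\eps$ smooth and $\norma{g_\eps}_{L^2}<\eps$, expands the multilinear expression, and absorbs the small term $\eps\norma{(\Theta-I)g_\eps}_{L^2}$ back into the left-hand side; this yields $f\in\mathcal H^s$ for some $s>0$ (Proposition~\ref{prop:regularityGain}), and a second-difference variant bootstraps $\mathcal H^s\to\mathcal H^{s+\delta}$ indefinitely (Proposition~\ref{prop:bootstrapingprop}). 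The genuine analytic work is establishing the linear smoothing $L^2\to\mathcal H^\alpha$ at the boundary values $(d,m)\in\{(2,4),(3,2),(3,3)\}$, which requires the H\"older-type estimates of \S\ref{sec:Holder} for the kernels $h_1\sigma_{d-1}\ast\cdots\ast h_m\sigma_{d-1}$.
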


\noindent 
The special case $(d,m)=(3,2)$ of Theorem \ref{thm:smoothnessTheorem} implies  \cite[Theorem 1.1]{CS12b}.
Thus Theorem \ref{thm:smoothnessTheorem} extends \cite[Theorem 1.1]{CS12b} to arbitrary dimensions and general even exponents.
Interestingly, our proof of Theorem \ref{thm:smoothnessTheorem} bypasses the Banach fixed point argument from  \cite{CS12b}, and as such could be considered more elementary and of independent value.
Moreover, the case $(d,m)=(2,4)$ of Theorem \ref{thm:smoothnessTheorem} completes the proof of the main result in \cite{Sh16b}, where the following issue was detected:
 in \cite[Proof of Prop.\@ 3.6]{Sh16b}, the first (unnumbered) displayed equation on p.\@ 9 seems to be incorrect.
We further believe that the argument in \cite{Sh16b} cannot be repaired without studying the regularity of the 4-fold convolution $\sigma_1^{\ast 4}$, such as a H\"older-type estimate of the kind established in \S\ref{sec:24} below.
The following result is an immediate consequence of Theorem \ref{thm:smoothnessTheorem}, and is used in a crucial manner in the companion paper \cite{OSQ19}.
\begin{corollary}
Let $d\geq 2$ and $q\geq 2\frac{d+1}{d-1}$ be an even integer.
If $f\in L^2(\sph{d-1})$ is a critical point of the functional  $\Phi_{d,q}$, then $f\in C^\infty(\sph{d-1})$. 
In particular, maximizers of $\Phi_{d,q}$ are $C^\infty$-smooth.
\end{corollary}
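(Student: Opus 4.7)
The plan is a bootstrap in the spirit of elliptic regularity, using the identity
$$f = \frac{a}{\lambda}\, \Mop(R^{k_1}(f),\ldots,R^{k_{m+1}}(f))$$
to upgrade the regularity of $f$ step by step. Since $R$ is an isometry of every $L^p(\sph{d-1})$ and preserves every H\"older and differentiability class, and since multiplication by $a\in C^\infty$ has the same property, the whole mechanism rests on the smoothing effect of $\Mop$: convolving $m+1$ singular measures on $\sph{d-1}$, whose curvature together with the number of factors (quantified exactly by the thresholds $m\geq 4$ if $d=2$ and $m\geq 2$ if $d\geq 3$, which are those making $\Mop$ an $L^2$-bounded multilinear form via Tomas--Stein) ensures that each application of $\Mop$ returns a function strictly smoother than its inputs.

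\textbf{Stage 1: from $L^2$ to $L^\infty$.} The first step is to establish multilinear estimates of the form
$$\|\Mop(f_1,\ldots,f_{m+1})\|_{L^r(\sph{d-1})}\leq C\prod_i\|f_i\|_{L^{p_i}(\sph{d-1})}$$
with $r$ strictly greater than $\min_i p_i$. The natural endpoints for interpolation are Tomas--Stein in convolution form (the $L^2$ bound on $(f\sigma)^{\ast n}$ when the $m+1$ factors are grouped appropriately, as in \eqref{eq:TSconv}) and the trivial bound $\|f\sigma\|_\infty\leq\|f\|_{L^1(\sigma)}$; complex interpolation between these, combined with H\"older on $\sph{d-1}$, should yield the required improvements. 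Substituting back into the equation, using that each $R^{k_i}$ preserves every $L^p$-norm, and iterating a finite number of times then promotes $f$ through every $L^p$ with $p<\infty$; an endpoint version (or one final application with $r=\infty$) delivers $f\in L^\infty$.

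\textbf{Stage 2: from $L^\infty$ to $C^\infty$.} Once $f$ is bounded, I would show that the right-hand side of the equation is already H\"older continuous: the pointwise representation of $f_1\sigma\ast\cdots\ast f_{m+1}\sigma$ as an integral of $\prod_i f_i$ against the appropriate slice of $\sigma^{\otimes m}$ reduces this to H\"older regularity of the density of the iterated convolution $\sigma^{\ast m}$ on $\R^d$, which improves as $m$ grows. Hence $f\in C^{0,\alpha}$ for some $\alpha>0$. To continue to $C^\infty$, I would apply tangential vector fields $X$ on $\sph{d-1}$, extended smoothly to $\R^d$, to both sides of the equation and distribute them across the convolution (so that each derivative lands on the factor with enough integrability to absorb it), obtaining an equation of the same type for $Xf$ and iterating $C^{k,\alpha}\Rightarrow C^{k+1,\alpha}$ until the regularity is arbitrary.

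\textbf{Main obstacle.} The principal difficulty I anticipate is the borderline case $(d,m)=(2,4)$, in which $\sigma_1^{\ast 4}$ lies exactly at the regularity threshold: the H\"older estimate for its density cannot be read off from standard stationary-phase bounds and must be proved by a direct, hands-on analysis near its singular loci. This is presumably the gap in \cite{Sh16b} flagged in the introduction, and in my plan it would be the only genuinely case-specific computation; the rest is a general bootstrap driven by multilinear convolution estimates.
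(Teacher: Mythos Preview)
Your bootstrap strategy is the right shape, but Stage~1 has a genuine gap, and it bites in more cases than you flag. The boundary set $\EEC$ includes not only $(2,4)$ but also $(d,2)$ for every $d\geq 3$---precisely the smallest admissible even $q$ in each dimension---and in all of these the interpolation you sketch does not produce an estimate $\|\Mop(f,\ldots,f)\|_{L^r(\sph{d-1})}\lesssim\|f\|_{L^2}^{m+1}$ with $r>2$: the functional is essentially scale-invariant there, and Tomas--Stein combined with the trivial $L^1\to L^\infty$ bound only recovers the $L^2\to L^2$ mapping already known when every input sits at $L^2$. Without that first step the $L^p$ ladder never leaves the ground. (Off the boundary the situation is indeed easier; see Remark~\ref{eq:freeGain}.) Your Stage~2 also underestimates the boundary: for $(d,m)=(3,2)$ one has $\sigma_2^{\ast 2}(x)=2\pi/|x|$, unbounded at the origin, so ``H\"older regularity of $\sigma^{\ast m}$'' fails there too, not just for $(2,4)$.

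The paper avoids the $L^p$ route entirely. It works in the spaces $\mathcal H^s$ defined by $L^2$-moduli of continuity and uses a decomposition $f=g_\eps+\vphi_\eps$ with $\vphi_\eps\in C^\infty$ and $\|g_\eps\|_{L^2}<\eps$. The engine is that the \emph{linear} operator $g\mapsto\Mop(\vphi_\eps,\ldots,\vphi_\eps,g)$ maps $L^2\to\mathcal H^\alpha$ for some $\alpha>0$ independent of $\vphi_\eps$ (Corollary~\ref{lem:HboundLop}); at the boundary this rests on weighted H\"older estimates such as Corollary~\ref{cor:multByX} and Proposition~\ref{prop:lip4circle}, where the fixed inputs must be Lipschitz, not merely bounded. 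Expanding $(\Theta-I)\Mop(f,\ldots,f)$ by multilinearity, every term with two or more $g_\eps$'s carries a factor $\eps$ and is absorbed back into the left-hand side, yielding the initial kick $f\in\mathcal H^\alpha$ without any $L^p$ gain; the bootstrap then proceeds via second differences (Proposition~\ref{prop:bootstrapingprop}).
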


\subsection{Outline}

In \S \ref{sec:prelim}, we recall some useful facts about the special orthogonal group, and define the appropriate smoothness spaces on $\sph{d-1}$ on which our estimates will be based.
In \S \ref{sec:prelimest}, we collect some simple properties of the multilinear operator $\Mop$, defined in \eqref{eq:definitionMop}.
A fundamental distinction arises, depending on whether or not the parameters $(d,m)$ from Theorem \ref{thm:smoothnessTheorem} lie on the ``boundary'' of the set of admissible values. 
In the latter case, there is an automatic uniform gain in the initial regularity, which leads to a quick proof of the smoothing property of $\Mop$ in the ``non-boundary'' case; see Lemma \ref{lem:MLambdaBound}. 
This is not possible if $(d,m)$ lies on the boundary, since in that case the corresponding functional is  essentially scale-invariant.
The analysis is then more delicate, and relies on H\"older-type estimates for certain convolution operators, which are the subject of \S \ref{sec:Holder}.
In turn, these estimates are used in \S \ref{sec:HsEEC} to find a suitable replacement for Lemma \ref{lem:MLambdaBound} in the boundary case; see Lemma \ref{cor:EEC}.
The final \S \ref{sec:Smoothness} is devoted to the proof of Theorem \ref{thm:smoothnessTheorem}.
We  proceed in two steps:
firstly,  we establish an initial ``kick'' in the regularity of any solution of equation \eqref{eq:generalEL}; 
secondly, we use a bootstrapping procedure to promote the initial gain in regularity to $C^\infty$-smoothness.

\subsection{Notation}
The set of natural numbers is $\N=\{1,2,3,\ldots\}$, and $\N_0=\N\cup\{0\}$.
Given a set $E\subset \R^d$, its indicator function is denoted by $\mathbbm{1}_E$, its Lebesgue measure by $|E|$, and its complement by $E^\complement=\R^d\setminus E$.
Given $r>0$, we let  $B(x,r)\subset \R^d$ denote the closed ball of radius $r$ centered at $x\in\R^d$, and abbreviate $B_r=B(0,r)$.
We will continue to denote by $(f\sigma_{d-1})^{\ast k}$ the $k$-fold convolution measure, recursively defined in \eqref{eq:recsigmaast}. We denote $\mathbf{1}:\sph{d-1}\to\R$ the function $\mathbf{1}(\omega)\equiv 1$ and the zero function by ${\bf 0}:\sph{d-1}\to\R$, ${\bf 0}(\omega)\equiv 0$.
We use $X\lesssim Y$, $Y\gtrsim X$, or $X=O(Y)$ to denote the estimate $|X|\leq CY$ for an absolute constant $C$, and $X\simeq Y$ to denote the estimates $X\lesssim Y \lesssim X$. We will often require the implied constant $C$ in the above notation to depend on additional parameters, which we will indicate by subscripts (unless explicitly omitted), thus for instance $X \lesssim_j Y$ denotes an estimate of the form $|X| \leq C_jY$ for some $C_j$ depending on $j$.

\section{Function spaces}\label{sec:prelim}

The special orthogonal group $\textup{SO}(d)$ consists of all $d\times d$ orthogonal matrices of unit determinant, and acts transitively on the unit sphere $\sph{d-1}$ in the natural way.
This action extends to actions on functions $f:\sph{d-1}\to\Co$ by $\Theta f=f\circ\Theta$ for $\Theta\in \textup{SO}(d)$,
and on finite Borel measures $\mu$ on $\R^d$ by $\Theta(\mu)(E)=\mu(\Theta(E))$, for $E\subseteq \R^d$.
This extension interacts well with convolutions, in the sense that
$\Theta(\mu\ast\nu)= \Theta(\mu)\ast \Theta(\nu).$
In particular, 
for any $\Theta\in \textup{SO}(d)$,
\begin{equation}\label{eq:Theta}
\Theta(f_1\sigma_{d-1}\ast \cdots\ast f_k\sigma_{d-1})= (\Theta f_1)\sigma_{d-1}\ast\cdots \ast(\Theta f_k)\sigma_{d-1}.
\end{equation}
For further information on the special orthogonal group, see \cite{Ha15} and the references therein.\\

Given $\alpha\in (0,1)$, let $\Lambda_\alpha(\R^d)$ denote the space of  H\"older continuous functions $f:\R^d\to\Co$ of order $\alpha$, with norm
\begin{equation}\label{eq:Holdernorm}
\|f\|_{\Lambda_\alpha(\R^d)}=\|f\|_{C^0(\R^d)}+\sup_{x\neq x'} |x-x'|^{-\alpha} |f(x)-f(x')|.
\end{equation}
Given $1<\alpha\notin\N$, write $\alpha=k+\delta$, with $k\in\N$ and $\delta\in(0,1)$. We then say that $f\in\Lambda_\alpha(\R^d)$ if $f$ is $k$ times continuously differentiable, $f\in C^k(\R^d)$, and all the $k$-th order partial derivatives of $f$ belong to $\Lambda_{\delta}(\R^d)$. An equivalent definition of the space $\Lambda_\alpha(\R^d)$ via Littlewood--Paley projections is available, but we shall delay its precise formulation until the need arises in the proof of Proposition \ref{lem:HolderregularityConvo} below.
Given $\alpha\in (0,1)$, the space of H\"older continuous functions $f:\sph{d-1}\to \Co$ of order $\alpha$, denoted $\Lambda_\alpha(\sph{d-1})$, is defined in a similar way to \eqref{eq:Holdernorm}.
We further consider the space $\textrm{Lip}(\sph{d-1})$ of Lipschitz continuous functions $f:\sph{d-1}\to\Co$, equipped with the norm
\[ \|f\|_{\textup{Lip}(\sph{d-1})}=\|f\|_{C^0(\sph{d-1})}+\sup_{\omega\neq \omega'} |\omega-\omega'|^{-1} |f(\omega)-f(\omega')|.\] 

By $H^s=H^s(\sph{d-1})$ we mean the usual Sobolev space of functions having $s\geq 0$ derivatives in $L^2(\sph{d-1})$, defined via spherical harmonic expansions e.g.\@ as in \cite[\S 1.7.3, Remark 7.6]{LM72},
or by considering a smooth partition of unity and diffeomorphisms onto the unit ball in $\R^{d-1}$ together with the usual Sobolev norm on $\R^{d-1}$; we 
set $H^0=L^2(\sph{d-1})$.
If $s$ is an integer, then the following norm is equivalent to any other norm for $H^s$:
\begin{equation}\label{eq:HsNorm}
\|f\|_{H^s}=\|f\|_{L^2(\sph{d-1})}+\sum_{1\leq i<j\leq d} \|X_{i,j}^s f\|_{L^2(\sph{d-1})},
\end{equation}
where the derivatives are given by
\begin{equation}\label{eq:Dij}
X_{i,j}=x_i\partial_j-x_j\partial_i=\frac{\partial}{\partial \theta_{i,j}},\, X_{i,j}^s=\frac{\partial^s}{\partial \theta_{i,j}^s},
\end{equation}
and $\theta_{i,j}$ denotes the angle in polar coordinates of the $(x_i,x_j)$-plane; see for instance \cite[\S 4.5]{DX}, and \cite[Prop.\@  3.3]{DX11}.

We find it convenient to work with the function spaces $\mathcal{H}^s=\mathcal{H}^s(\sph{d-1})$, which for $d=3$ were introduced in \cite{CS12b}.
To extend the definition to general dimensions $d\geq 2$, recall  \eqref{eq:Dij}, where we introduced the derivatives $X_{i,j}={\partial}/{\partial\theta_{i,j}}$. We can equivalently view $X_{i,j}$ as the $C^\infty$-vector field on $\sph{d-1}$ which generates rotations about the $(x_i,x_j)$-plane, for each $1\leq i<j\leq d$. In this way, for each $\nu=(\nu_1,\dots,\nu_d)\in \sph{d-1}$, $\exp(tX_{i,j})(\nu)$ is obtained by rotating the vector $(\nu_i,\nu_j)$ by $t$ radians.
We note that $\{X_{i,j}:1\leq i<j\leq d\}$ forms a basis for $\mathfrak{so}(d)$, the Lie algebra of $SO(d)$.

Observe that the following quantity defines an equivalent norm on the space $\Lambda_\alpha(\sph{d-1})$, provided $\alpha\in(0,1)$: 
\[ \norma{f}_{C^0(\sph{d-1})}+\max_{1\leq i<j\leq d}\sup_{\omega\in\sph{d-1}}\sup_{t\in\R}\ab{t}^{-\alpha}\ab{f(e^{t X_{i,j}}(\omega))-f(\omega)}. \]
Given $s\in(0,1)$, the space $\mathcal{H}^s$ is defined as the set of all functions $f\in L^2(\sph{d-1})$ for which the norm
\begin{equation}\label{eq:mathcalHsNorm}
\|f\|_{\mathcal{H}^s}=\|f\|_{L^2(\sph{d-1})}+\sum_{1\leq i<j\leq d} \sup_{|t|\leq 1}|t|^{-s}\|f\circ e^{t X_{i,j}}- f\|_{L^2(\sph{d-1})}
\end{equation}
is finite. 
We further set $\mathcal{H}^0=L^2(\sph{d-1})$.
Similarly to the case of Euclidean space, the notion of weak differentiability of a function with respect to the vector field $X_{i,j}$ is made precise by the use of identity \cite[Eq.\@ (5.4)]{OSQ19} which states that, for any complex-valued functions $f,g\in C^1(\sph{d-1})$,
\begin{equation}\label{eq:defWeakDer}
\int_{\sph{d-1}} (X_{i,j}f)\, \overline{g}\d\sigma_{d-1}=-\int_{\sph{d-1}} f\, \overline{(X_{i,j}g)}\d\sigma_{d-1}.
\end{equation}
In this way, we say that $f\in L^2(\sph{d-1})$ is weakly differentiable with respect to the vector field $X_{i,j}$ if there exists a function, denoted $X_{i,j}f$, which belongs to $L^1(\sph{d-1})$ and satisfies \eqref{eq:defWeakDer} for all $g\in C^\infty(\sph{d-1})$.
 
If $s=k+\alpha$, with $k\in\N$ and $\alpha\in (0,1)$,  then the space $\mathcal{H}^s$ consists of all functions $f\in L^2(\sph{d-1})$ for which the norm
\begin{equation}\label{eq:mathcalHsNormBig}
\|f\|_{\mathcal{H}^s}=\|f\|_{L^2(\sph{d-1})}+\sum_Y\sum_{1\leq i<j\leq d} \sup_{|t|\leq 1}|t|^{-\alpha}\|Yf\circ e^{t X_{i,j}}- Yf\|_{L^2(\sph{d-1})}
\end{equation}
is finite, where $Y$ ranges over the finite set of all compositions $X_{i_1,j_1}\circ X_{i_2,j_2}\circ\cdots\circ X_{i_\ell,j_\ell}$ with $0\leq \ell\leq k$ factors, and $f$ itself is viewed as $Yf$ where $Y$ has zero factors.
We implicitly assume the function $f$ to be weakly differentiable with respect to the vector fields $\{X_{i,j}\}_{1\leq i<j\leq d}$ as many times as required by the definition of the norm.

The next result explores the relationship between the function spaces $\mathcal{H}^s$ and the usual Sobolev spaces $H^t$.
\begin{lemma}\label{lem:SobolevHolder}
For every $0\leq t<s$, $s\notin\N$, $\mathcal H^s$ is contained in the Sobolev space $H^t$, and
\begin{equation}\label{eq:SobHolIneq}
\|f\|_{H^t}\leq C(s,t) \|f\|_{\mathcal H^s},
\end{equation}
for all $f\in \mathcal H^s$ and some constant $C(s,t)<\infty$.
\end{lemma}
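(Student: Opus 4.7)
The plan is to identify $\mathcal{H}^s$ with a sphere-rotational analogue of the Besov space $B^{s}_{2,\infty}$ and $H^t$ with a Gagliardo-type fractional Sobolev space, and then to invoke the elementary embedding $B^s_{2,\infty}\hookrightarrow B^t_{2,2}=H^t$ for $t<s$. More concretely, I will work off the following equivalent characterization of $H^t$ for $t\in(0,1)$:
\begin{equation*}
\|f\|_{H^t}^2 \simeq \|f\|_{L^2(\sph{d-1})}^2 + \sum_{1\leq i<j\leq d}\int_{-\pi}^{\pi} |\tau|^{-1-2t}\, \|f\circ e^{\tau X_{i,j}}-f\|_{L^2(\sph{d-1})}^2 \d\tau.
\end{equation*}
This is standard on compact Lie groups and their homogeneous spaces, and on $\sph{d-1}$ it can be obtained either from the spherical harmonic decomposition together with the identity $-\Delta_{\sph{d-1}}=c_d\sum_{i<j}X_{i,j}^2$ for the Laplace--Beltrami operator and the fact that $\{X_{i,j}\}$ spans $\mathfrak{so}(d)$, or by patching local coordinate charts and invoking the usual Gagliardo seminorm characterization on $\R^{d-1}$.

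First I would treat the base case $s\in(0,1)$ and $0\leq t<s$. Definition \eqref{eq:mathcalHsNorm} immediately gives $\|f\circ e^{\tau X_{i,j}}-f\|_{L^2(\sph{d-1})}\leq |\tau|^{s}\|f\|_{\mathcal{H}^s}$ for $|\tau|\leq 1$, so that
\begin{equation*}
\int_{|\tau|\leq 1}|\tau|^{-1-2t}\|f\circ e^{\tau X_{i,j}}-f\|_{L^2(\sph{d-1})}^2 \d\tau \leq \|f\|_{\mathcal{H}^s}^2 \int_{|\tau|\leq 1}|\tau|^{-1-2(t-s)} \d\tau,
\end{equation*}
which is finite exactly because $t<s$. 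The complementary region $|\tau|\in[1,\pi]$ is controlled by the trivial bound $\|f\circ e^{\tau X_{i,j}}-f\|_{L^2(\sph{d-1})}\leq 2\|f\|_{L^2(\sph{d-1})}$. Combined with the Gagliardo characterization this yields \eqref{eq:SobHolIneq} in this regime, and the case $t=0$ is trivial.

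For the general case $s=k+\alpha$ with $k\in\N$ and $\alpha\in(0,1)$, I would apply the base case to each $Yf$, where $Y$ runs over compositions of $\ell\leq k$ vector fields $X_{i,j}$ appearing in definition \eqref{eq:mathcalHsNormBig}. This gives $Yf\in H^{\beta}$ for every $\beta<\alpha$, with norm bounded by $\|f\|_{\mathcal{H}^s}$. If the target smoothness $t$ satisfies $t-k<\alpha$ (which is automatic when $t<s$), this rotational differentiability of order $k+\beta$ with $t<k+\beta<s$ translates, via the integer-order equivalent norm \eqref{eq:HsNorm} and a short spectral or Littlewood--Paley interpolation on $\sph{d-1}$, into the bound $\|f\|_{H^t}\lesssim_{s,t}\|f\|_{\mathcal{H}^s}$.

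The main obstacle is the bookkeeping needed to handle the case when $t$ is an integer, and the minor subtlety that the norm \eqref{eq:HsNorm} uses iterated single vector fields $X_{i,j}^s$ while \eqref{eq:mathcalHsNormBig} uses arbitrary compositions; both are handled by standard commutator identities in $\mathfrak{so}(d)$ together with the fact that $s\notin\N$ allows one to insert an intermediate non-integer $t<t_0<s$ and use the continuous embedding $H^{t_0}\hookrightarrow H^t$, thereby reducing to the non-integer target case treated above.
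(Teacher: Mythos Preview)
Your strategy for the base range $0<t<s<1$ is essentially the paper's: both compare the $\mathcal{H}^s$ modulus of continuity against a Gagliardo-type integral for $H^t$, splitting into $|\tau|\leq 1$ (controlled by the $\mathcal{H}^s$ condition) and large $\tau$ (controlled by $\|f\|_{L^2}$). The only cosmetic difference is that the paper first passes to local charts on $\R^{d-1}$ and uses the Euclidean Gagliardo seminorm, whereas you stay on the sphere and invoke a rotational analogue.

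There is, however, a genuine gap in your treatment of $s=k+\alpha>1$. You write ``apply the base case to each $Yf$ \ldots\ this gives $Yf\in H^\beta$ \ldots\ with norm bounded by $\|f\|_{\mathcal H^s}$''. But your Gagliardo characterization of $\|Yf\|_{H^\beta}$ contains the term $\|Yf\|_{L^2(\sph{d-1})}$, and this quantity is \emph{not} part of the norm \eqref{eq:mathcalHsNormBig}: that norm contains only $\|f\|_{L^2}$ and the oscillation terms $\sup_{|t|\leq 1}|t|^{-\alpha}\|Yf\circ e^{tX}-Yf\|_{L^2}$, never $\|Yf\|_{L^2}$ itself. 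Establishing $\|Yf\|_{L^2}\lesssim\|f\|_{\mathcal H^s}$ is in fact the main content of the paper's proof in this regime. The paper does it by localizing to compactly supported functions on $\R^{d-1}$ and using the identity $(Dg)(y-t\ell)=-\tfrac{d}{dt}g(y-t\ell)$ together with an averaging-in-$t$ argument to get the duality bound $|\langle Df,g\rangle|\lesssim\|f\|_{\mathcal H^{1+\alpha}}\|g\|_{L^2}$; an intrinsic variant on the sphere, based on the periodicity $\int_0^{2\pi}(Xg)(e^{tX}\omega)\,dt=0$, is recorded in Remark~\ref{rem:SobolevHolderIntegerSphere}. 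An alternative fix, closer in spirit to your spectral viewpoint, is to observe that $\int_{\sph{d-1}}Yf\,d\sigma_{d-1}=0$ whenever $Y$ has at least one factor, so that a Poincar\'e/spectral-gap inequality bounds $\|Yf\|_{L^2}$ by the homogeneous $\dot H^\beta$ seminorm you already control---but this step must be stated, not skipped. The ``bookkeeping'' and commutator issues you flag at the end are secondary; the missing $L^2$-control of the derivatives $Yf$ is the substantive point.
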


\noindent Estimate \eqref{eq:SobHolIneq} was noted in  \cite[Lemma 2.1]{CS12b} in the three-dimensional case $d=3$ when $s<1$. From Lemma \ref{lem:SobolevHolder} it follows at once that, given $s\in(1,\infty)\setminus\N$, $f\in \mathcal H^s$, and $X\in \{X_{i,j}: \;1\leq i<j\leq d\}$, then  $\norma{Xf}_{L^2(\sph{d-1})}\lesssim \norma{f}_{\mathcal{H}^s}$ and therefore $Xf\in \mathcal H^{s-1}$. This observation will be useful in the sequel.

 As a preliminary step towards the proof of Lemma \ref{lem:SobolevHolder}, we recall the Euclidean Sobolev spaces $H^s(\R^d)$, and define the spaces $\mathcal{H}^s(\R^d)$ in analogy to the spherical ones, $\mathcal{H}^s$. Given  $f:\R^d\to\R$ and $s=k+\alpha$ with $k\in\N_0$ and $\alpha\in(0,1)$, we consider the norms 
\begin{equation}\label{eq:SobRd}
\|f\|_{H^s(\R^d)}^2:=\int_{\R^d}(1+|\xi|^2)^s|\widehat{f}(\xi)|^2\d\xi,
\end{equation}
\begin{equation}\label{eq:LipRd}
\|f\|_{\mathcal{H}^s(\R^d)}:=\|f\|_{L^2(\R^d)}+\sum_{\ell} \sup_{|x|\leq 1}|x|^{-\alpha}\|D^\ell f\circ \tau_x- D^\ell f\|_{L^2(\R^d)},
\end{equation}
where 
the sum in \eqref{eq:LipRd} runs over all multiindices $\ell=(\ell_1,\dotsc,\ell_d)\in\N^d$ satisfying $0\leq |\ell|\leq k$,
$D^\ell:={\partial^{\ab{\ell}}}/{\partial x_d^{\ell_d}\dotsb\partial x_1^{\ell_1}}$ denotes the partial derivative,
$\tau_x:\R^d\to\R^d, y\mapsto x+y$ denotes  translation by $x=(x_1,\dotsc,x_d)\in \R^d$, and $f$ itself is viewed as $D^0f$. 
For $1\leq i\leq d$, let $e_i$ denote the $i$-th canonical vector $e_i=(0,\dotsc,0,1,0,\dotsc,0)$, with the $1$ in the $i$-th position. 
For every $s=k+\alpha$, $k\in\N_0$, $\alpha\in(0,1)$, the following is an equivalent norm for $\mathcal{H}^s(\R^d)$, perhaps more reminiscent to that for $\sph{d-1}$ in \eqref{eq:mathcalHsNormBig} :
\begin{equation}\label{eq:LipRdV2}
\|f\|_{L^2(\R^d)}+\sum_{0\leq \ab{\ell}\leq k}\sum_{1\leq i\leq d} \sup_{\ab{t}\leq 1}\ab{t}^{-\alpha}\|D^\ell f\circ \tau_{te_i}- D^\ell f\|_{L^2(\R^d)}.
\end{equation}
It is also worth observing that, by the triangle inequality and the translation invariance of the Lebesgue measure in $\R^d$, an equivalent norm to that in \eqref{eq:LipRd} or \eqref{eq:LipRdV2} is obtained by replacing $\sup_{\ab{t}\leq1}$ by $\sup_{\ab{t}\leq \eps}$, for any $\eps>0$. Likewise, by the triangle inequality and the $SO(d)$-invariance of the measure $\sigma_{d-1}$ in $\sph{d-1}$, an equivalent norm for $\mathcal{H}^s$ is obtained from \eqref{eq:mathcalHsNormBig} by replacing $\sup_{\ab{t}\leq 1}$ by $\sup_{\ab{t}\leq \eps}$, for any $\eps>0$.

\begin{proof}[Proof of Lemma \ref{lem:SobolevHolder}]
We discuss the analogous Euclidean statement, for the case of the sphere then follows by working in local coordinates. 
In fact, as already mentioned, the $H^t$-norm on $\sph{d-1}$ can be defined by  considering a smooth partition of unity and diffeomorphisms onto the unit ball in $\R^{d-1}$ together with the usual Sobolev norm on $\R^{d-1}$, as in \eqref{eq:SobRd}.
In order to handle the $\mathcal H^s$-norm on $\sph{d-1}$, we observe that it is likewise amenable to the use of local coordinates: given a smooth partition of unity $\{\vphi_i\}_{1\leq i\leq N}$ on $\sph{d-1}$, then $f\in\mathcal{H}^s$ if and only if $\vphi_if\in\mathcal{H}^s$, for every $1\leq i\leq N$, and $\norma{f}_{\mathcal{H}^s}\simeq \sum_{1\leq i\leq N}\norma{\vphi_i f}_{\mathcal H^s}$. Let $\mathcal{O}_i$ denote the support of $\vphi_i$, which we may take to be connected and of small diameter if necessary, and let $\{(\Omega_i,\psi_i)\}_{1\leq i\leq N}$ denote a system of local coordinates for $\sph{d-1}$ subordinate to $\{\mathcal{O}_i\}_{1\leq i\leq N}$; i.e., $\Omega_i$ is open and connected, $\psi_i:\Omega_i\to \operatorname{int}(B_1)$ is a diffeomorphism onto the open unit ball in $\R^{d-1}$, and $\mathcal{O}_i\Subset \Omega_i$ is compactly contained in $\Omega_i$. If $\eps>0$ is small enough, it then follows that $\norma{\vphi_if}_{L^2(\sph{d-1})}=\norma{\vphi_i f}_{L^2(\Omega_i)}$ and, for every $\ab{t}\leq\eps$,
\[ \|Y(\vphi_if)\circ e^{t X_{k,l}}- Y(\vphi_if)\|_{L^2(\sph{d-1})}=\|Y(\vphi_if)\circ e^{t X_{k,l}}- Y(\vphi_if)\|_{L^2(\Omega_i\cap e^{-tX_{k,l}}(\Omega_i))}, \]
for every $Y$ and $(k,l)$ as in \eqref{eq:LipRd}. In this way, in order to show that $\norma{\vphi_if}_{\mathcal{H}^s}\simeq\norma{(\vphi_if)\circ\psi_i^{-1}}_{\mathcal{H}^s(\R^{d-1})}$, one may appeal to the theory of differentiability along noncommuting vector fields, as developed in \cite[\S 4]{Ho67}; see, in particular, Lemmata 4.1, 4.2, and Theorem 4.3 in \cite{Ho67}.

In light of the previous paragraph, we can assume that in the Euclidean case the relevant supports are contained in the unit ball of $\R^d$. 
This will be useful later on in the argument. More precisely, the task is now to show that there exists $C(s,t)<\infty$ such that for every $f\in\mathcal{H}^s(\R^d)$ whose support is contained in the unit ball of $\R^d$, it holds that
\begin{equation}\label{eq:SobHolIneqRd}
\norma{f}_{H^t(\R^d)}\leq C(s,t)\norma{f}_{\mathcal{H}^s(\R^d)},
\end{equation}
whenever $0\leq t<s\notin \N$.

We start by considering the case $0<t<s<1$ (the case $t=0$ being trivial), and recalling the equivalent formulation of Sobolev spaces in terms of the Riesz potential.
Fix $t\in (0,1)$, and let $f:\R^d\to\R$ be given. 
From Plancherel's Theorem, we have that 
\begin{align}
 \int_{(\R^d)^2}\frac{|f(x+y)-f(y)|^2}{|x|^{d+2t}}\d x\d y 
&=\int_{\R^d}\int_{\R^d} |e^{ix\cdot\xi}-1|^2|\widehat{f}(\xi)|^2\d\xi\frac{\d x}{|x|^{d+2t}}\\
&=A_{t,d}\int_{\R^d} |\xi|^{2t}|\widehat{f}(\xi)|^2 \d\xi, \label{eq:Atd}
\end{align}
where we used the fact that the integral
\[I_{t,d}(\xi):=\int_{\R^d}\frac{|e^{ix\cdot \xi}-1|^2}{|x|^{d+2t}}\d x\]
satisfies $I_{t,d}(\lambda\xi)=\lambda^{2t} I_{t,d}(\xi)$, for every $\lambda>0$.
The constant $A_{t,d}$ in \eqref{eq:Atd} satisfies $A_{t,d}=I_{t,d}(\omega)$, for any $\omega\in\sph{d-1}$, and 
is finite as long as $t\in (0,1)$.
In turn, since $t\in (0,1)$, we have that 
\[(1+|\xi|^2)^t\leq 1+|\xi|^{2t}\leq 2(1+|\xi|^2)^t,\]
for every $\xi\in\R^d$. In particular, the following two-sided estimate holds:
\begin{equation}\label{eq:SobolevRiesz}
\|f\|_{H^t(\R^d)}^2\simeq_{t,d}\|f\|_{L^2(\R^d)}^2+\int_{(\R^d)^2}\frac{|f(x+y)-f(y)|^2}{|x|^{d+2t}}\d x\d y.
\end{equation}
Given $s\in(t,1)$, we use H\"older's inequality to estimate:
\begin{align*}
\int_{(\R^d)^2}&\frac{|f(x+y)-f(y)|^2}{|x|^{d+2t}}\d x\d y\\
&=\int_{|x|\leq 1}\int_{\R^d}\frac{|f(x+y)-f(y)|^2}{|x|^{d+2t}}\d y\d x
+\int_{|x|> 1}\int_{\R^d}\frac{|f(x+y)-f(y)|^2}{|x|^{d+2t}}\d y\d x\\
&\leq \biggl(\sup_{|x|\leq 1}\int_{\R^d}\frac{|f(x+y)-f(y)|^2}{|x|^{2s}}\d y\biggr)\int_{|x|\leq 1}\frac{\d x}{|x|^{d-2(s-t)}}\\
&\qquad+2\int_{|x|> 1}\int_{\R^d}\frac{|f(x+y)|^2+|f(y)|^2}{|x|^{d+2t}}\d y\d x \\
&\lesssim_{d} (s-t)^{-1} \|f\|_{\mathcal H^s(\R^d)}^2+t^{-1}\|f\|_{L^2(\R^d)}^2.
\end{align*}
In light of \eqref{eq:SobolevRiesz}, this establishes \eqref{eq:SobHolIneqRd} in the particular case when $0<t<s<1$.

We now consider the case when $s=k+\alpha$, with $k\in\N$ and $\alpha\in (0,1)$. 
No generality is lost in assuming that $t\in(k,s)$, and specializing to the case $k=1$, so that the desired conclusion would follow from the estimate $\norma{D^\ell f}_{H^{t-1}(\R^d)}\lesssim\norma{f}_{\mathcal{H}^{s}(\R^d)}$, for every $\ab{\ell}\leq 1$. 
In this case, the previous argument applies to $D^\ell f$ {\it provided} $D^\ell f\in L^2(\R^d)$ for any $|\ell|=1$, with appropriately bounded $L^2(\R^d)$-norm, which we now verify in the special case when the support of $f$ is contained in the unit ball of $\R^d$. As discussed above, this will suffice for our application to $\sph{d-1}$.

Fix a multiindex $\ell\in\N^d$,  $|\ell|=1$, and write $D:=D^\ell$. 
Let $g\in C_0^\infty(\R^d)$ 
be such that supp$(g)\subset B_1$. 
Then $(D g)(y-t\ell)=-\frac{\d}{\d t}(g(y-t\ell))=D(g(\cdot-t\ell))(y)$.
By Fubini's Theorem and the definition of weak derivative of $f$, it follows that 
\[ 0=\int_{-2}^{2}\int_{\R^d} \overline{(D g)}(y-t\ell){f(y)}\d y \d t
=-\int_{-2}^2\int_{\R^d} \overline{g}(y){(Df)(y+t\ell)}\d y \d t.\]
As a consequence,
\begin{align*}
\int_{\R^d} Df(y)\overline{g}(y) \d y
&=\frac14\int_{-2}^2\int_{\R^d}\bigl(Df(y)-Df(y+t \ell)\bigr)\overline{g}(y)\d y\d t.
\end{align*}
By adding and subtracting appropriate terms, the triangle and Cauchy--Schwarz inequalities and the invariance of the Lebesgue measure in $\R^d$ with respect to translations together imply
\begin{align}
\biggl\vert\int_{\R^d} Df(y)\overline{g}(y) \d y\biggr\vert
&\leq \frac{1}{4}\int_{-2}^{2}\norma{Df\circ \tau_{t\ell}-Df\circ\tau_{\frac{t}{2}\ell} }_{L^2(\R^d)}\norma{g}_{L^2(\R^d)}\d t\nonumber\\
&\qquad+\frac{1}{4}\int_{-2}^{2}\norma{Df\circ \tau_{\frac{t}{2}\ell}-Df}_{L^2(\R^d)}\norma{g}_{L^2(\R^d)}\d t\nonumber\\
&\lesssim\int_{0}^{1}\norma{Df\circ \tau_{t\ell}-Df}_{L^2(\R^d)}\d t\,\norma{g}_{L^2(\R^d)}\nonumber\\
\label{eq:L2pairingDerivative}
&\leq\left( \int_{0}^{1}t^\alpha\d t\right)\,\norma{f}_{\mathcal{H}^{1+\alpha}(\R^d)}\norma{g}_{L^2(\R^d)}.
\end{align}
Consequently, $Df\in L^2(\R^d)$ and $\norma{Df}_{L^2(\R^d)}\lesssim\norma{f}_{\mathcal{H}^{s}(\R^d)}$. 
 This concludes the proof of the lemma.
\end{proof}
\begin{remark}\label{rem:SobolevHolderIntegerSphere}
For our purposes later on, it will suffice to invoke the following simpler consequence of Lemma \ref{lem:SobolevHolder}: For any $0<s\notin\N$, there is a continuous embedding $\mathcal{H}^s\subseteq H^{\lfloor s\rfloor}$. We now provide a short proof of this fact which is intrinsic to the sphere. 
The case $s\in(0,1)$ is clear since then $H^{\lfloor s\rfloor}=L^2(\sph{d-1})$. For $s>1$, $s\notin \N$, and $f\in \mathcal{H}^s$, it suffices to show that $\norma{Xf}_{L^2(\sph{d-1})}\lesssim\norma{f}_{\mathcal{H}^s}$, where $X$ ranges over the finite set of all compositions $X_{i_1,j_1}\circ X_{i_2,j_2}\circ\dotsm\circ X_{i_\ell,j_\ell}$ with $1\leq \ell\leq \lfloor s\rfloor$ factors. \footnote{As stated in \eqref{eq:HsNorm} and explained in the references thereafter, it suffices to consider the case $\ell=\lfloor s\rfloor$.} As noted in the course of the proof of Lemma \ref{lem:SobolevHolder}, we may specialize to the case $\ell=1$ since the general case follows in the same way. We then simply note that, for any $g\in C^\infty(\sph{d-1})$, $X\in\{X_{i,j}\colon 1\leq i<j\leq d\}$, and $\omega\in\sph{d-1}$, it holds that $(Xg)(e^{tX}\omega)=(X(g\circ e^{tX}))(\omega)=\frac{d}{dt}(g(e^{tX}\omega))$, so that $\int_{0}^{2\pi}(Xg)(e^{tX}\omega)\d t=0$, and the desired estimate,
\[
\biggl\vert\int_{\sph{d-1}} Xf(\omega)\overline{g}(\omega) \d\sigma_{d-1}(\omega)\biggr\vert
\lesssim\norma{f}_{\mathcal{H}^s}\norma{g}_{L^2(\sph{d-1})},
\]
follows in the same way as \eqref{eq:L2pairingDerivative}. See also \cite[Cor.\@ 7]{Co83} for a discussion of this embedding using an equivalent definition\footnote{We comment on various equivalent definitions of the space $\mathcal H^s$ in \S \ref{subseq:secondDifferences} below.} of $\mathcal{H}^s$.
\end{remark}

\section{Preliminary inequalities}\label{sec:prelimest}
We establish some linear and multilinear inequalities which will be used to analyze the solutions of equation \eqref{eq:generalEL}.
Our first result translates into a modest amount of control over the regularity of  convolution measures
in a number of situations of interest.

\begin{proposition}\label{lem:HolderregularityConvo}
	Given integers $d,m\geq 2$, set $\alpha=\frac{1}{2}(d-1)(m-2)-1$. 
	Let $\{f_j\}_{j=1}^m\subset C^\infty(\mathbb{S}^{d-1})$. 
		If $\alpha>0$, then $f_1\sigma_{d-1}\ast\cdots\ast f_m\sigma_{d-1}\in\Lambda_{\alpha}(\R^d)$.
\end{proposition}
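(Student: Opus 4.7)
My plan is to pass to the Fourier side and exploit the oscillatory decay of $\widehat{\sigma_{d-1}}$. Writing $F := f_1\sigma_{d-1}\ast\cdots\ast f_m\sigma_{d-1}$, one has $\widehat{F}=\prod_{j=1}^{m}\widehat{f_j\sigma_{d-1}}$. Since $\sph{d-1}$ has everywhere non-vanishing Gaussian curvature and each $f_j\in C^\infty(\sph{d-1})$, the standard stationary-phase argument (cf.\ \cite[Ch.~VIII]{St93}) gives the pointwise bound
\[ |\widehat{f_j\sigma_{d-1}}(\xi)|\lesssim_{f_j}(1+|\xi|)^{-(d-1)/2}, \]
and multiplying the $m$ factors yields $|\widehat{F}(\xi)|\lesssim(1+|\xi|)^{-m(d-1)/2}$. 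The hypothesis $\alpha>0$ is equivalent to $m(d-1)/2>d$, so $\widehat{F}\in L^1(\R^d)$ and in particular $F$ is a bounded continuous function on $\R^d$.

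To upgrade this to H\"older regularity, I would invoke the Littlewood--Paley description of $\Lambda_\alpha(\R^d)$ alluded to in the paragraph following \eqref{eq:Holdernorm}: for $\alpha\in(0,\infty)\setminus\N$ the class $\Lambda_\alpha(\R^d)$ coincides with the Besov space $B^{\alpha}_{\infty,\infty}(\R^d)$. Fix a smooth dyadic partition of unity $\{\phi_k\}_{k\in\N_0}$ on $\R^d$, with $\operatorname{supp}\phi_k\subseteq\{|\xi|\simeq 2^k\}$ for $k\geq 1$ and $\operatorname{supp}\phi_0\subseteq B_2$, and set $P_k F := (\phi_k\widehat{F})^\vee$. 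For $k\geq 1$, Hausdorff--Young combined with the decay above gives
\[ \|P_k F\|_{L^\infty(\R^d)}\leq\|\phi_k\widehat{F}\|_{L^1(\R^d)}\lesssim 2^{kd}\cdot 2^{-km(d-1)/2}=2^{-k\alpha}, \]
while $\|P_0 F\|_{L^\infty(\R^d)}\lesssim\|\widehat{F}\|_{L^1(B_2)}<\infty$. This is precisely the Besov condition $F\in B^{\alpha}_{\infty,\infty}(\R^d)$, which yields $F\in\Lambda_\alpha(\R^d)$ whenever $\alpha\notin\N$.

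None of the individual steps is analytically deep, so the main obstacle is bookkeeping: namely (i) making explicit the equivalence between the H\"older/Zygmund norm \eqref{eq:Holdernorm} and the Besov norm $\sup_k 2^{k\alpha}\|P_k F\|_{L^\infty}$, and (ii) handling the integer-$\alpha$ case, at which the Zygmund class is strictly larger than the H\"older class defined by \eqref{eq:Holdernorm}. The latter obstruction is harmless for the applications needed later in the paper: whenever $\alpha\in\N$ one may simply replace $\alpha$ by $\alpha-\eps$ for arbitrarily small $\eps>0$ and still obtain all the regularity needed downstream. The $C^\infty$ hypothesis on each $f_j$ enters only to make the implicit stationary-phase constant finite, and could in principle be relaxed to sufficient finite smoothness.
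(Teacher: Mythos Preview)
Your argument is correct and is essentially identical to the paper's own proof: both pass to the Fourier side, invoke the stationary-phase decay $|\widehat{f_j\sigma_{d-1}}(\xi)|\lesssim(1+|\xi|)^{-(d-1)/2}$, apply the Littlewood--Paley characterization of $\Lambda_\alpha$ (the paper cites \cite[Theorem~6.3.7]{Gr14}), and use Hausdorff--Young on each dyadic shell to obtain $\|P_k F\|_{L^\infty}\lesssim 2^{kd}\cdot 2^{-km(d-1)/2}=2^{-k\alpha}$. Your remark about integer $\alpha$ is a fair caveat, though the paper simply invokes the Littlewood--Paley characterization for all $\alpha>0$ without singling out that case.
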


\noindent The proof of Proposition \ref{lem:HolderregularityConvo} is based on the classical Littlewood--Paley characterization of the H\"older spaces $\Lambda_{\alpha}(\R^d)$; see \cite[\S 6.3]{Gr14} and \cite[Ch.\@ VI \S 5]{St93}.

\begin{proof}[Proof of Proposition \ref{lem:HolderregularityConvo}]
Consider a smooth partition of unity in $\R^d$. 
More precisely, fix $\eta\geq 0$, a nonnegative, decreasing and radial $C^\infty$-function of compact support, defined on $\R^d$, with the properties that
$\eta(x)=1$ for $|x|\leq 1$, and $\eta(x)=0$ for $|x|\geq 2$.
Together with $\eta$, define another function $\delta$, by $\delta(x):=\eta(x)-\eta(2x)\geq 0$.
For each integer $j\geq 1$, consider the function $\vphi_j:=\delta(2^{-j}\cdot)$,
which is  supported on the spherical shell $\{x\in\R^d\colon 2^{j-1}\leq\ab{x}\leq 2^{j+1}\}$, and let $\vphi_0=\eta$, so that  
\[\sum_{j=0}^\infty\vphi_j(x)=1, \quad\text{for every } x\in\R^d.\]
For $\alpha>0$, a function $G:\R^d\to\Co$ belongs to $\Lambda_\alpha(\R^d)$ if and only if 
\begin{equation}\label{eq:LPcharac}
 \sup_{j\in\N_0} 2^{j\alpha}\norma{(\widehat{G}\vphi_j)^\vee}_{L^\infty(\R^d)}<\infty. 
 \end{equation}
Moreover,
the expression on the left-hand side of \eqref{eq:LPcharac} produces a norm which is equivalent to any other norm for $\Lambda_{\alpha}(\R^d)$;
see \cite[Theorem 6.3.7]{Gr14}.
The Hausdorff--Young inequality implies that  estimate \eqref{eq:LPcharac} is fulfilled if
\[ \int_{\R^d}\ab{\widehat{G}(x)\vphi_j(x)}\d x\lesssim 2^{- j\alpha},\quad j=0,1,2,\ldots, \]
for some implicit constant which does not depend on $j$. 
Now, the Fourier transform of $F:=f_1\sigma_{d-1}\ast\cdots\ast f_m\sigma_{d-1}$ is given by $\widehat{F}=\prod_{k=1}^m\widehat{f_k\sigma}_{d-1}$, which leads to the analysis of the integrals
\[ \int_{B_2}\prod_{k=1}^m\big|\widehat{f_k\sigma}_{d-1}(x)\big| \d x,\quad
\int_{B_{2^{j+1}}\setminus B_{2^{j-1}}}{\prod_{k=1}^m\big|\widehat{f_k\sigma}_{d-1}(x)\big|} \d x, \quad j=1,2,\ldots \]
A well-known stationary phase argument applied to each $f_k\in C^\infty(\sph{d-1})$ yields the following decay estimate:
\[ \ab{\widehat{f_k\sigma}_{d-1}(x)}\lesssim (1+\ab{x})^{-\frac{d-1}{2}}, \text{ for every }x\in\R^d,\]
where the implicit constant depends only on the dimension $d$ and the function $f_k$; see \cite[Chapter VIII, \S 3.1]{St93}. 
Using polar coordinates, it is then direct to check that
\[ \int_{B_{2^{j+1}}\setminus B_{2^{j-1}}} \prod_{k=1}^m\big|\widehat{f_k\sigma}_{d-1}(x)\big|  \d x\lesssim 2^{jd}2^{-\frac{jm(d-1)}{2}}=2^{-j((d-1)(\frac{m}{2}-1)-1)}, \]
 for every $j\in\N$.
The desired conclusion follows from this and from the observation that $\widehat{F}$ defines a continuous function on $\R^d$, and is thus bounded on the ball $B_2\subset\R^d$.
\end{proof}

\begin{remark}
We find it convenient to consider the ``universe'' of admissible parameters
 \[\mathfrak{U}=\{(d,m)\in\N^2\colon d=2 \text{ and } m\geq 4, \text{ or }  d\geq 3\text{ and } m\geq 2\},\] 
 together with its ``boundary''
\begin{equation}\label{eq:EECdef}
\EEC=\{(2,4),(3,3)\}\cup\{(d,2)\colon d\geq 3\}.
\end{equation}
Note that the set $\mathfrak{U}$ encapsulates the hypotheses on $d,m$ imposed by Theorem \ref{thm:smoothnessTheorem}.
On the other hand, with the exception of $(d,m)=(3,3)$, the set $\EEC$ contains precisely those values $(d,m)$ for which $m$ is the smallest even integer such that ${\bf T}_{d,m+2}<\infty$, and therefore the corresponding inequality \eqref{eq:TS} holds. As the upcoming sections will reveal, the analysis simplifies considerably if $(d,m)\in\mathfrak{U}\setminus \partial\frak{U}$, which is the reason to treat the boundary set $\partial\frak{U}$ separately.
As a first instance of this phenomenon, note that, given $(d,m)\in\mathfrak{U}$, we have that $(d,m)\notin\EEC$ if and only if $\frac{1}{2}(d-1)(m-2)-1>0$. 
These are precisely the cases covered by Proposition \ref{lem:HolderregularityConvo}.
See also the comments following Lemma \ref{lem:MLambdaBound}, and Remark \ref{eq:freeGain} below.
\end{remark}

Recall the operator
 $\Mop\colon L^2(\mathbb S^{d-1})^{m+1}\to L^2(\mathbb S^{d-1})$, which was defined in \eqref{eq:definitionMop} as
\[\Mop(f_1,\dotsc,f_{m+1})=(f_1\sigma_{d-1}\ast \dotsm\ast f_{m+1}\sigma_{d-1})\Big\vert_{\mathbb S^{d-1}}.\]

\begin{lemma}\label{prop:basicM}
	The operator $\Mop$ defined in \eqref{eq:definitionMop} satisfies the following properties: 
	\begin{itemize}
		\item[(i)]
		$\Mop$  is an $(m+1)$-linear operator; 
		\item[(ii)]
		$\Mop$ is symmetric in the sense that, given any permutation $\tau$ of $\{1,2,\dotsc,m+1\}$,
		\begin{equation}\label{eq:symmetryM}
		\Mop(f_1,\dotsc,f_{m+1})=\Mop(f_{\tau(1)},\dotsc,f_{\tau(m+1)});
		\end{equation}
		\item[(iii)]
		For any $\Theta\in \mathrm{SO}(d)$, the following identities hold:
		\begin{equation}\label{eq:compositionThetaM}
		\Mop(f_1,\dotsc,f_{m+1})\circ\Theta=\Mop(f_1\circ\Theta,\dotsc,f_{m+1}\circ\Theta);
		\end{equation}
		
		\begin{equation}
		\begin{split}
		(\Theta-I)\Mop(f_1,\dotsc,f_{m+1})
		&=\sum_{j=1}^{m+1}\Mop(f_1,\dotsc,f_{j-1},(\Theta-I)f_j,\Theta f_{j+1}, \dotsc, \Theta f_{m+1})\\
		&=\Mop((\Theta-I)f_1,\Theta f_2,\dotsc,\Theta f_{m+1})\\
		&\quad+\Mop(f_1,(\Theta-I)f_2,\dotsc,\Theta f_{m+1})\\
		&\qquad\vdots\\
		&\quad+\Mop(f_1,f_2,\dotsc,(\Theta-I) f_{m+1});\label{eq:expansionM}
		\end{split}
		\end{equation}
		\item[(iv)] For any $s\geq 0$, there exists $A_s<\infty$ such that, if $\{f_j\}_{j=1}^{m+1}\subset H^s$, then
		\begin{equation}\label{eq:SobolevboundforM}
		\norma{\Mop(f_1,\dotsc,f_{m+1})}_{H^s}\leq A_s\prod_{j=1}^{m+1}\norma{f_j}_{H^s};
		\end{equation}
		\item[(v)] If\footnote{Recall the definition \eqref{eq:Dij} of $X_{i,j}=\frac{\partial}{\partial\theta_{i,j}}$.}  $X=X_{i,j}$, for some $1\leq i< j \leq d$, and $\{f_k\}_{k=1}^{m+1}\subset H^1$, 
		then
		\begin{equation}\label{eq:derivativeMop}
		X\Mop(f_1,\dotsc,f_{m+1})=\sum_{k=1}^{m+1}\Mop(f_1,\dots,f_{k-1},X f_k,f_{k+1},\ldots,f_{m+1});
		\end{equation}
		\item[(vi)]
		For any $0<s\notin\Z$, there exists $C_s<\infty$ such that, if $\{f_j\}_{j=1}^{m+1}\subset \mathcal{H}^s$, then
		\begin{equation}\label{eq:HsboundforM}
		\norma{\Mop(f_1,\dotsc,f_{m+1})}_{\mathcal H^s}\leq C_s\prod_{j=1}^{m+1}\norma{f_j}_{\mathcal H^s}.
		\end{equation}
	\end{itemize}
\end{lemma}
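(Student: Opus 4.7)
The plan is to prove the six items in the stated order, with a small reshuffle so that the $s=0$ case of (iv) is established before (v). Items (i)--(iii) are formal consequences of \eqref{eq:definitionMop}: multilinearity and symmetry follow because $f\mapsto f\sigma_{d-1}$ is linear and convolution of finite Borel measures is multilinear and symmetric; identity \eqref{eq:compositionThetaM} is \eqref{eq:Theta} restricted to $\sph{d-1}$; and \eqref{eq:expansionM} is the standard telescoping expansion of $\Mop(\Theta f_1,\dots,\Theta f_{m+1})-\Mop(f_1,\dots,f_{m+1})$ combined with (i).

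For (iv) with $s=0$, I would exploit the observation that $\Mop(f_1,\dots,f_{m+1})=\mathcal{E}^{\ast}\bigl(\prod_{j=1}^{m+1}\widehat{f_j\sigma}_{d-1}\bigr)$ up to a Fourier normalization, since the convolution $f_1\sigma_{d-1}\ast\dotsb\ast f_{m+1}\sigma_{d-1}$ has Fourier transform $\prod_j\widehat{f_j\sigma}_{d-1}$. Combining the boundedness of $\mathcal{E}^{\ast}\colon L^{q'}(\R^d)\to L^2(\sph{d-1})$ at $q=m+2$ (which holds since $(d,m)\in\mathfrak{U}$ forces $m+2\geq q_d$) with H\"older's inequality at exponent $m+2$ on each of the $m+1$ factors and the Tomas--Stein bound $\|\widehat{f_j\sigma}_{d-1}\|_{L^{m+2}}\lesssim\|f_j\|_{L^2}$ would then yield \eqref{eq:SobolevboundforM} with $s=0$; this is essentially the multilinear counterpart of \eqref{eq:chain}.

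Identity (v) would be obtained by differentiating the equivariance relation \eqref{eq:compositionThetaM} at $t=0$ with $\Theta=e^{tX_{i,j}}$: on smooth inputs this is an application of the chain rule, and the extension to $f_k\in H^1$ would follow by density, with the $s=0$ case of (iv) controlling difference quotients in $L^2$. With (v) in hand, the Sobolev bound (iv) for integer $s$ would follow by iterating \eqref{eq:derivativeMop} in combination with the norm \eqref{eq:HsNorm}, and the non-integer case would then follow from multilinear complex interpolation between consecutive integer endpoints.

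Finally, for (vi), write $s=k+\alpha$ with $k\in\N_0$ and $\alpha\in(0,1)$. For each composition $Y=X_{i_1,j_1}\circ\dotsm\circ X_{i_\ell,j_\ell}$ with $0\leq\ell\leq k$, iterated use of (v) expresses $Y\Mop(f_1,\dots,f_{m+1})$ as a finite linear combination of terms $\Mop(Y_1 f_1,\dots,Y_{m+1}f_{m+1})$ with each $Y_j$ a composition of at most $k$ vector fields; I would then apply the telescoping identity \eqref{eq:expansionM} with $\Theta=e^{tX_{i,j}}$ and $\ab{t}\leq 1$, combined with the $s=0$ bound of (iv), to bound $\|(\Theta-I)Y\Mop(f_1,\dots,f_{m+1})\|_{L^2}$ by a finite sum of products $\|(\Theta-I)Y_j f_j\|_{L^2}\prod_{i\neq j}\|Y_i f_i\|_{L^2}$. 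Dividing by $\ab{t}^\alpha$ and summing, the first factor is $\leq \|f_j\|_{\mathcal{H}^s}$ by \eqref{eq:mathcalHsNormBig} and the remaining factors are $\lesssim \|f_i\|_{\mathcal{H}^s}$ by Remark \ref{rem:SobolevHolderIntegerSphere}, producing \eqref{eq:HsboundforM}. The main obstacle is the combinatorial bookkeeping in (vi), but since every resulting term has the same ``one H\"older factor times a product of $L^2$-controlled factors'' structure, the estimate reduces uniformly to (iv) and the $\mathcal{H}^s$-norm.
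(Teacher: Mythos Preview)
Your proposal is correct and follows essentially the same route as the paper: items (i)--(v) are dismissed there as ``direct from the definitions, or simple to verify'' (with (iv) referred to \cite[Lemma~2.2]{CS12b}), and your argument for (vi) --- expand $Y\Mop$ via iterated use of \eqref{eq:derivativeMop}, apply the telescoping identity \eqref{eq:expansionM} with $\Theta=e^{tX}$, and bound each summand by the $s=0$ case of (iv) --- is exactly what the paper does. The only cosmetic difference is that the paper first isolates the case $s\in(0,1)$ and then reduces the general case to it by noting $\|Y_j f_j\|_{\mathcal H^\alpha}\leq\|f_j\|_{\mathcal H^s}$, whereas you handle all $s$ at once; the underlying mechanism is identical.
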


\noindent We record the basic $L^2$-estimate, which coincides with the  case $s=0$ of \eqref{eq:SobolevboundforM}:
\begin{equation}\label{eq:basicL2}
	\norma{\Mop(f_1,\dotsc,f_{m+1})}_{L^2(\mathbb{S}^{d-1})}\lesssim\prod_{j=1}^{m+1}\norma{f_j}_{L^2(\mathbb{S}^{d-1})}.
\end{equation}

\begin{proof}[Proof of Lemma \ref{prop:basicM}]
	We prove estimate \eqref{eq:HsboundforM} only, the rest being direct from the definitions, or simple to verify; in particular, the proof of (iv) is analogous to that of \cite[Lemma 2.2]{CS12b}. 
	Let us first assume that $s\in(0,1)$.
	Given $\{f_k\}_{k=1}^{m+1}\subset \mathcal{H}^s$,  set $g:=\Mop(f_1,\dots f_{m+1})$. 
	 Let $\Theta=e^{tX}\in \textup{SO}(d)$, where $X=X_{i,j}$, for some $1\leq i<j\leq d$.
	 In light of \eqref{eq:expansionM}, we then have that
	\begin{equation}\label{eq:expansionfHsbound}
	\Theta g-g=\sum_{k=1}^{m+1}\Mop(f_1,\dotsc,f_{k-1},(\Theta-I)f_k,\Theta f_{k+1},\dotsc, \Theta f_{m+1}).
	\end{equation}
         By \eqref{eq:basicL2}, the first summand on the right-hand side of \eqref{eq:expansionfHsbound} satisfies
	\[\norma{\Mop((\Theta-I) f_1,f_2,\dots, f_{m+1})}_{L^2(\sph{d-1})}
	\lesssim\|\Theta f_1-f_1\|_{L^2(\sph{d-1})}\prod_{\ell=2}^{m+1}\|f_\ell\|_{L^2(\sph{d-1})},\]
	and similarly for the other $m$ summands.
	It follows that
	\begin{align*}
	\sup_{|t|\leq 1}|t|^{-s}\|g\circ e^{tX}-g\|_{L^2(\sph{d-1})}
	&\lesssim\sum_{k=1}^{m+1} \sup_{|t|\leq 1} |t|^{-s}\|e^{tX}f_k-f_k\|_{L^2(\sph{d-1})}\prod_{\ell:\,\ell\neq k} \|f_\ell\|_{L^2(\sph{d-1})}\\
	&\leq\sum_{k=1}^{m+1} \norma{f_k}_{\mathcal H^s}\prod_{\ell:\, \ell\neq k} \|f_\ell\|_{L^2(\sph{d-1})}
	\leq \prod_{k=1}^{m+1} \|f_k\|_{\mathcal{H}^s}.
	\end{align*}
	Since this holds whenever $X$ is any of the vector fields $\{X_{i,j}\}_{1\leq i<j\leq d}$, estimate \eqref{eq:HsboundforM} follows, settling (vi) in the special case when $s\in(0,1)$. Now suppose $s=k+\alpha$, with $k\in\N$  and $\alpha\in(0,1)$. Let $1\leq \ell\leq k$, and consider a composition $Y$ with $\ell$ factors as in \eqref{eq:mathcalHsNormBig}. 
 Remark \ref{rem:SobolevHolderIntegerSphere} and estimate \eqref{eq:SobolevboundforM} imply that $g\in H^k$.
	In light of \eqref{eq:derivativeMop}, we then see that $Yg$ can be written as a sum of terms of the form
	$\Mop(Y_1f_1,\dots,Y_{m+1}f_{m+1})$,
	where $Y_1,\dots,Y_{m+1}$ are compositions of $i_1,\dots,i_{m+1}$ vector fields $X_{i,j}$, and $\sum_{j=1}^{m+1}i_j=\ell$. Note that $Y_jf_j\in \mathcal H^\alpha$ for all such vector fields, and $\norma{Y_jf_j}_{\mathcal H^\alpha}\leq \norma{f_j}_{\mathcal H^s}$. Expanding $(\Theta-I)Yg$ as in \eqref{eq:expansionfHsbound}, we find in the same way as before
	 that
	\[ \sup_{\ab{t}\leq 1}\ab{t}^{-\alpha}\norma{Yg\circ e^{tX}-Yg}_{L^2(\sph{d-1})}\lesssim \prod_{j=1}^{m+1}\norma{f_j}_{\mathcal H^s}. \]
	This implies the desired $\mathcal H^s$-bound for the function $g$, and concludes the proof of the lemma.
\end{proof}

The following result details a sense in which $\Mop$ can be viewed as a smoothing operator, but requires $(d,m)\notin\EEC$. 

\begin{lemma}\label{lem:MLambdaBound}
	Given $(d,m)\in\mathfrak{U}\setminus \EEC$, set $\alpha_{d,m}=\frac{1}{2}(d-1)(m-2)-1$. If $\alpha\in(0,1)$ is such that $\alpha\leq\alpha_{d,m}$,  $\{\vphi_j\}_{j=1}^m\subset C^\infty(\sph{d-1})$, and $g\in L^2(\sph{d-1})$, then $\Mop(\vphi_{1},\dotsc,\vphi_{m},g)\in \mathcal H^{\alpha}$.
	 Moreover, the following estimate holds:
	\begin{multline}\label{eq:HnormMsmoothcase}
	\norma{\Mop(\vphi_{1},\dotsc,\vphi_{m},g)}_{\mathcal H^{\alpha}}\\
	\lesssim\biggl(\prod_{j=1}^m\norma{\vphi_j}_{L^2(\sph{d-1})}+\norma{\vphi_1\sigma_{d-1}\ast\dotsm\ast\vphi_m\sigma_{d-1}}_{\Lambda_{\alpha_{d,m}}(\R^d)}\biggr)\norma{g}_{L^2(\sph{d-1})}.
	\end{multline}
\end{lemma}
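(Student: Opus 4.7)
The plan is to exploit that, because $(d,m)\in\mathfrak{U}\setminus\EEC$ forces $\alpha_{d,m}>0$, Proposition~\ref{lem:HolderregularityConvo} applied to $\{\vphi_j\}_{j=1}^m$ yields that the purely-$\vphi$ convolution
\[ F:=\vphi_1\sigma_{d-1}\ast\cdots\ast\vphi_m\sigma_{d-1}\]
is a classical continuous function on $\R^d$ of class $\Lambda_{\alpha_{d,m}}(\R^d)$, with compact support contained in $B_m$. Associativity of convolution then recasts the operator of interest as
\[ h(\omega):=\Mop(\vphi_1,\dotsc,\vphi_m,g)(\omega) = \int_{\sph{d-1}}F(\omega-\eta)g(\eta)\d\sigma_{d-1}(\eta),\quad \omega\in\sph{d-1},\]
a genuine convolution against a H\"older-regular kernel paired with $g\in L^2(\sph{d-1})$.

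With this reformulation, the two pieces of $\norma{h}_{\mathcal H^\alpha}$ can be handled separately. For the $L^2$-part, the basic estimate \eqref{eq:basicL2} applied to the $(m+1)$ arguments $\vphi_1,\dotsc,\vphi_m,g$ is available precisely because $(d,m)\in\mathfrak U$, and it produces the $\prod_{j=1}^m\norma{\vphi_j}_{L^2}\cdot\norma{g}_{L^2}$ summand in \eqref{eq:HnormMsmoothcase}.

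For the H\"older part, fix $1\leq i<j\leq d$, fix $|t|\leq 1$, and set $\Theta:=e^{tX_{i,j}}$. The integral representation of $h$ gives
\[ (h\circ\Theta-h)(\omega)=\int_{\sph{d-1}}\bigl[F(\Theta\omega-\eta)-F(\omega-\eta)\bigr]g(\eta)\d\sigma_{d-1}(\eta),\]
so Cauchy--Schwarz in $\eta$ reduces matters to estimating $\norma{K_t}_{L^2(\sigma_{d-1}\times\sigma_{d-1})}$ for the kernel $K_t(\omega,\eta):=F(\Theta\omega-\eta)-F(\omega-\eta)$. The H\"older regularity of $F$ together with the elementary bound $|\Theta\omega-\omega|\leq|t|$ for $|t|\leq 1$ yield the pointwise estimate $|K_t(\omega,\eta)|\lesssim \norma{F}_{\Lambda_{\alpha_{d,m}}(\R^d)}|t|^\alpha$ uniform in $\omega,\eta$, whence
\[ \sup_{|t|\leq 1}|t|^{-\alpha}\norma{h\circ\Theta-h}_{L^2(\sph{d-1})}\lesssim\norma{F}_{\Lambda_{\alpha_{d,m}}(\R^d)}\norma{g}_{L^2(\sph{d-1})}.\]
Summing with the $L^2$-bound yields \eqref{eq:HnormMsmoothcase}.

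The only mildly delicate point is the norm comparison $\norma{F}_{\Lambda_\alpha(\R^d)}\lesssim_{d,m,\alpha}\norma{F}_{\Lambda_{\alpha_{d,m}}(\R^d)}$ used in the pointwise bound on $K_t$. When $\alpha_{d,m}\in(0,1)$ this is the standard embedding of H\"older spaces on bounded sets; when $\alpha_{d,m}\geq 1$ one instead uses that $F$, being compactly supported with $\|F\|_{\Lambda_{\alpha_{d,m}}}<\infty$, is Lipschitz on $\R^d$ and therefore $\alpha$-H\"older for every $\alpha\in(0,1)$, with constant controlled by $\norma{F}_{\Lambda_{\alpha_{d,m}}(\R^d)}$. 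No bootstrapping or fixed-point argument is needed; the lemma falls out directly once Proposition~\ref{lem:HolderregularityConvo} has provided classical H\"older regularity of the kernel.
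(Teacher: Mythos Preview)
Your proof is correct and follows essentially the same approach as the paper: write $\Mop(\vphi_1,\dotsc,\vphi_m,g)$ as integration of $g$ against the kernel $F=\vphi_1\sigma_{d-1}\ast\cdots\ast\vphi_m\sigma_{d-1}$, invoke Proposition~\ref{lem:HolderregularityConvo} to place $F$ in $\Lambda_{\alpha_{d,m}}(\R^d)$, then use the pointwise H\"older bound $|F(\Theta\omega-\eta)-F(\omega-\eta)|\lesssim\norma{F}_{\Lambda_\alpha}|t|^\alpha$ together with \eqref{eq:basicL2} for the $L^2$-part. The paper bounds the difference by $\norma{g}_{L^1}$ and then Cauchy--Schwarz, whereas you apply Cauchy--Schwarz in $\eta$ directly; these are equivalent.

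One small imprecision: when $\alpha_{d,m}=1$ (which does occur, e.g.\ $(d,m)\in\{(2,6),(3,4),(5,3)\}$), the space $\Lambda_{\alpha_{d,m}}$ obtained from Proposition~\ref{lem:HolderregularityConvo} via the Littlewood--Paley characterization is the Zygmund class, not the Lipschitz class, so your claim that $F$ is Lipschitz is not literally true in that borderline case. However, your conclusion is unaffected: the Littlewood--Paley description immediately gives $\norma{F}_{\Lambda_\alpha(\R^d)}\lesssim\norma{F}_{\Lambda_{\alpha_{d,m}}(\R^d)}$ for every $\alpha\leq\alpha_{d,m}$, which is all you actually use. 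The paper sidesteps this by simply asserting $F\in\Lambda_\alpha$ without passing through a Lipschitz intermediate.
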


\noindent It is natural to wonder whether a similar gain in regularity holds in the case when $(d,m)\in\EEC$.
The (affirmative) answer is more subtle, and
we postpone the discussion until \S\ref{sec:HsEEC}; see Lemma \ref{cor:EEC} below.

\begin{proof}[Proof of Lemma \ref{lem:MLambdaBound}]
	Recall that $\alpha_{d,m}>0$ since $(d,m)\in\mathfrak{U}\setminus \EEC$. 
	It then follows from Proposition \ref{lem:HolderregularityConvo} that $\vphi_1\sigma_{d-1}\ast\dotsm\ast\vphi_m\sigma_{d-1}\in\Lambda_{\alpha_{d,m}}(\R^d)$. 
	For notational convenience, we  shall only consider the special case when $\vphi_j=\vphi$, for all $j$.
	Given $\Theta\in\textup{SO}(d)$ and $\omega\in\mathbb{S}^{d-1}$, estimate: 
	\begin{align*}
	\vert\Mop(\vphi,\dotsc,\vphi,g)\circ&\Theta(\omega)-\Mop(\vphi,\dotsc,\vphi,g)(\omega)\vert\\
	&\leq \int_{\mathbb{S}^{d-1}} \Bigl\vert(\vphi\sigma_{d-1})^{\ast m}(\Theta\omega-\eta)-(\vphi\sigma_{d-1})^{\ast m}(\omega-\eta)\Bigr\vert \ab{g(\eta)}\d\sigma_{d-1}(\eta).
	\end{align*}
	If $\alpha\in(0,1)$ is such that $\alpha\leq \alpha_{d,m}$, then $(\vphi\sigma_{d-1})^{\ast m}\in \Lambda_\alpha(\R^d)$, and consequently
	\begin{align*}
	\vert\Mop(\vphi,\dotsc,\vphi,g)\circ\Theta(\omega)-\Mop(\vphi,\dotsc,&\vphi,g)(\omega)\vert\\
	&\leq \ab{(\Theta-I)\omega}^\alpha\norma{(\vphi\sigma_{d-1})^{\ast m}}_{\Lambda_\alpha(\R^d)}\norma{g}_{L^1(\sph{d-1})}\\
	&\lesssim\ab{\Theta-I}^\alpha\norma{(\vphi\sigma_{d-1})^{\ast m}}_{\Lambda_\alpha(\R^d)}\norma{g}_{L^2(\sph{d-1})}.
	\end{align*}
	Letting $\Theta=e^{tX}$ for some $X\in\{X_{i,j}\}_{1\leq i<j\leq d}$, and integrating the square of both sides of the latter estimate, we obtain
	\begin{multline*}
	\sup_{\ab{t}\leq 1}\ab{t}^{-\alpha}\norma{\Mop(\vphi,\dotsc,\vphi,g)\circ\Theta-\Mop(\vphi,\dotsc,\vphi,g)}_{L^2(\mathbb{S}^{d-1})}\\\lesssim\sup_{\ab{t}\leq 1}\ab{t}^{-\alpha}\ab{e^{tX}-I}^\alpha\norma{(\vphi\sigma_{d-1})^{\ast m}}_{\Lambda_\alpha(\R^d)}\norma{g}_{L^2(\sph{d-1})}.
	\end{multline*}
	In turn, this and the basic $L^2$-estimate \eqref{eq:basicL2} together imply
	\[ \norma{\Mop(\vphi,\dotsc,\vphi,g)}_{\mathcal H^\alpha}
	\lesssim(\norma{\vphi}_{L^2(\sph{d-1})}^{m}+\norma{(\vphi\sigma_{d-1})^{\ast m}}_{\Lambda_\alpha(\R^d)})\norma{g}_{L^2(\sph{d-1})}. \]
	To obtain \eqref{eq:HnormMsmoothcase}, simply rerun the argument with the $\varphi_j$'s in place of $\varphi$.
	This completes the proof of the lemma.
\end{proof}

\section{H\"older regularity}\label{sec:Holder}

In this section, we prove H\"older-type estimates for certain convolution measures, which will pave the way towards finding a suitable replacement for Lemma \ref{lem:MLambdaBound} in the case when $(d,m)\in\EEC$.

\subsection{Two-fold convolutions}\label{subseq:weakHolder}
The purpose of this subsection is to generalize \cite[Lemma 2.3]{CS12b} to arbitrary dimensions $d\geq 2$.
While for the most part  the analysis follows similar lines to those of \cite{CS12b}, we include it for the sake of completeness.
Start by recalling that the 2-fold convolution $\sigma_{d-1}\ast \sigma_{d-1}$ defines a measure supported on the ball $B_2\subset\R^d$, which is absolutely continuous with respect to Lebesgue measure on $B_2$, and whose Radon--Nikodym derivative equals
\begin{equation}\label{eq:2fold}
(\sigma_{d-1}\ast \sigma_{d-1})(x) 
=\frac{\omega_{d-2}}{2^{d-3}}\frac{1}{|x|} (4-|x|^2)_+^{\frac{d-3}2}.
\end{equation}
Here, $\omega_{d-2}:=\sigma_{d-2}(\sph{d-2})=2 \pi^{\frac{d-1}2} \Gamma(\frac{d-1}2)^{-1}$ denotes the surface area of $\sph{d-2}$, $y_+:=\max\{0,y\}$ for $y\in\R$, and 
\[(4-\ab{x}^2)_+^{\frac{d-3}{2}}:=\bigl((4-\ab{x}^2)_+\bigr)^{\frac{d-3}{2}};\]
see for instance \cite[Lemma 5]{COS15}.

Let $h_1, h_2\in \textup{Lip}(\sph{d-1})$.
From \cite[Appendix A.2]{FOS17}, we know that the function $u_{12}$ defined by the relation
$(h_1\sigma_{d-1}\ast h_2\sigma_{d-1})(x)=u_{12}(x)(\sigma_{d-1}\ast\sigma_{d-1})(x)$, for $0<\ab{x}\leq 2$, and $u_{12}(x)=0$ for $\ab{x}>2$, can be expressed as
\begin{equation}\label{eq:defu12alld}
u_{12}(x)=\fint_{\Gamma_x} h_1(\nu) h_2(x-\nu)\,\d\sigma_x(\nu),
\end{equation}
where $\Gamma_x=\sph{d-1}\cap(x+\sph{d-1})$, and $\fint$ denotes the averaged integral on the $(d-2)$-dimensional sphere $\Gamma_x$; see also \cite{CS12a} for a careful discussion of the case $d=3$. 

The case $d=2$ merits some further remarks. In this case, if  $0<\ab{x}<2$, then $\Gamma_x$ consists of two points, which we identify with $\sph{0}$. Let $x^\perp$ be the $90^\circ$-counterclockwise rotation of $x$, so that $x^\perp\cdot x=0$ and $\ab{x^\perp}=\ab{x}$. Given $x\in B_2\setminus\{0\}\subset\R^2$, there exist unique-up-to-permutation $x_1,x_2\in\sph{1}$, such that $x=x_1+x_2$. The vectors $x_1,x_2$ are explicitly given by
\[ x_1=\frac{x}{2}+\biggl(1-\frac{\ab{x}^2}{4}\biggr)^{\frac{1}{2}}\frac{x^\perp}{\ab{x}},\quad x_2=\frac{x}{2}-\biggl(1-\frac{\ab{x}^2}{4}\biggr)^{\frac{1}{2}}\frac{x^\perp}{\ab{x}}. \]
Given $h_1,h_2\in \mathrm{Lip}(\sph{1})$, the convolution $h_1\sigma_1\ast h_2\sigma_1$ can be written in the following way: if 
$0<\ab{x}\leq 2$, then
\begin{align*}
(h_1\sigma_1\ast h_2\sigma_1)(x)
&=2\frac{h_1(x_1)h_2(x_2)+h_1(x_2)h_2(x_1)}{\ab{x}\sqrt{4-\ab{x}^2}},
\end{align*}
while for $\ab{x}>2$ one obviously has that $(h_1\sigma_1\ast h_2\sigma_1)(x)=0$.
 In this case, identity  \eqref{eq:defu12alld} is then seen to reduce to
\[u_{12}(x)=\frac{1}{2}(h_1(x_1)h_2(x_2)+h_1(x_2)h_2(x_1)),\text{ if } 0<\ab{x}\leq 2.\]

\begin{lemma}\label{lem:weakHolderd}
	Let $d\geq 2$, and $x,x'\in B_2\setminus\{0\}\subset\R^d$. Then
	\[ \ab{u_{12}(x)-u_{12}(x')}\leq C\norma{h_1}_{\textup{Lip}(\sph{d-1})}\norma{h_2}_{\textup{Lip}(\sph{d-1})}\biggl(\ab{x-x'}^{1/2}+\Abs{\frac{x}{\ab{x}}-\frac{x'}{\ab{x'}}}\biggr), \]
	for some universal constant $C<\infty$.
\end{lemma}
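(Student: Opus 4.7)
The plan is to split $u_{12}(x) - u_{12}(x')$ along an intermediate point $y := |x'|\cdot x/|x|$, which shares the direction of $x$ and the norm of $x'$, so that the triangle inequality isolates a radial increment $u_{12}(x) - u_{12}(y)$ and an angular increment $u_{12}(y) - u_{12}(x')$.

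For the angular increment, choose a rotation $\Theta \in \mathrm{SO}(d)$ acting as the identity on the orthogonal complement of $\mathrm{span}(x, x')$ and mapping $x/|x|$ to $x'/|x'|$; then $\Theta y = x'$, $\|\Theta - I\| \lesssim |x/|x| - x'/|x'||$, and the $\mathrm{SO}(d)$-invariance of $\sph{d-1}$ gives $\Gamma_{x'} = \Theta\Gamma_y$. Changing variables $\nu = \Theta\mu$ in \eqref{eq:defu12alld} yields
\[
u_{12}(x') - u_{12}(y) = \fint_{\Gamma_y}\bigl[h_1(\Theta\mu)\, h_2(\Theta(y-\mu)) - h_1(\mu)\, h_2(y-\mu)\bigr]\,\d\sigma_y(\mu),
\]
and a two-term add-and-subtract combined with $\|h_i\|_{C^0} \leq \|h_i\|_{\mathrm{Lip}}$ bounds the integrand by a constant multiple of $\|h_1\|_{\mathrm{Lip}}\|h_2\|_{\mathrm{Lip}}\|\Theta - I\|$, as required.

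For the radial increment, write $x = r\omega$ and $y = r'\omega$ with $\omega = x/|x|$ and $r, r' \in (0, 2]$, and parametrize both $\Gamma_x$ and $\Gamma_y$ by the same auxiliary $(d-2)$-sphere $\mathbb{S}^{d-2}_\omega := \{v \in \sph{d-1} : v \perp \omega\}$ via
\[
\nu(v) = (r/2)\omega + \sqrt{1-r^2/4}\, v, \qquad \nu'(v) = (r'/2)\omega + \sqrt{1-r'^2/4}\, v,
\]
so that the averaged integrals in \eqref{eq:defu12alld} become integrals against the same normalized measure on $\mathbb{S}^{d-2}_\omega$. (In dimension $d = 2$, this set is the two-point set $\{\pm\omega^\perp\}$ and $\fint$ is the two-point average, but the same scheme applies.) The key elementary estimate is
\[
\bigl|\sqrt{1-r^2/4} - \sqrt{1-r'^2/4}\bigr| \leq \tfrac{1}{2}\sqrt{(r+r')\,|r-r'|} \leq \sqrt{|r-r'|},
\]
obtained from $|\sqrt{a} - \sqrt{b}| \leq \sqrt{|a-b|}$ for $a, b \geq 0$. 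Combined with $|r-r'| \leq 2$, this yields $|\nu(v) - \nu'(v)|$ and $|(x-\nu(v)) - (y-\nu'(v))|$ of order $\sqrt{|r-r'|}$, uniformly in $v$. A termwise Lipschitz bound under the averaged integral then gives $|u_{12}(x) - u_{12}(y)| \lesssim \|h_1\|_{\mathrm{Lip}}\|h_2\|_{\mathrm{Lip}}\sqrt{|r-r'|}$, and $|r - r'| = \bigl||x| - |x'|\bigr| \leq |x - x'|$ closes the estimate.

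The main obstacle is the degeneration of $\Gamma_x$ as $|x| \to 2$, where $\sqrt{1 - |x|^2/4} \to 0$ and small radial shifts cause comparatively large displacements on $\sph{d-1}$; this is precisely what forces the H\"older exponent $\tfrac{1}{2}$ in $|x-x'|$ and is captured by the elementary square-root inequality in the radial step.
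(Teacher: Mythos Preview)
Your proposal is correct and follows essentially the same approach as the paper: both arguments introduce an intermediate point sharing the direction of one of $x,x'$ and the norm of the other, handle the angular increment via a rotation $\Theta$ in the plane of $x,x'$ together with the change of variables $\nu=\Theta\mu$, and handle the radial increment by parametrizing $\Gamma_x$ and $\Gamma_y$ over the same auxiliary $(d-2)$-sphere and invoking the square-root inequality $|\sqrt{1-r^2/4}-\sqrt{1-r'^2/4}|\leq |r-r'|^{1/2}$. The only cosmetic differences are the choice of intermediate point ($|x'|\,x/|x|$ versus the paper's $|x|\,x'/|x'|$) and the order in which the two cases are treated.
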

\begin{proof}
The integral \eqref{eq:defu12alld} defining $u_{12}$ can be equivalently written as
	\[u_{12}(x)=\omega_{d-2}^{-1}\int_{\sph{d-2}_x} h_1(\tfrac{x}{2}+\rho(x)\omega) h_2(\tfrac{x}{2}-\rho(x)\omega)\,\d\sigma_{d-2}(\omega),  \] 
	where the function $\rho\geq0$ satisfies $\rho(x)^2+(\ab{x}/2)^2=1$, and the unit sphere $\sph{d-2}_x$ is contained in the $(d-1)$-dimensional subspace of $\R^d$ orthogonal to $x$, and is therefore parallel to the hyperplane containing $\Gamma_x$. It is elementary to check that $\ab{\rho(x)-\rho(x')}\leq\ab{\ab{x}-\ab{x'}}^{1/2}$, for every $x,x'\in B_2\setminus\{0\}$. 
	
	Let us start by considering the case $x'=\la x$, for some $\la>0$. We then have that $\sph{d-1}_x=\sph{d-1}_{x'}$, and so
	\begin{align*}
	\abs{\Bigl(\frac{x}{2}+\rho(x)\omega\Bigr)-\Bigl(\frac{x'}{2}+\rho(x')\omega\Bigr)}\leq \frac{1}{2}\ab{x-x'}+\ab{\rho(x)-\rho(x')}\lesssim \ab{x-x'}^{1/2}.
	\end{align*}
	In a similar way,
	\[\Big|{\Big(\frac{x}{2}-\rho(x)\omega\Big)-\Big(\frac{x'}{2}-\rho(x')\omega\Big)}\Big|\lesssim \ab{x-x'}^{1/2}.\]
	Denote $x_1=\frac{x}{2}+\rho(x)\omega,x_2=\frac{x}{2}-\rho(x)\omega$, and $\tilde u(x)=h_1(x_1)h_2(x_2)$. Since $h_1,h_2$ are Lipschitz functions, we have that
	\begin{align*}
	\ab{\tilde u(x)-\tilde u(x')}&=\ab{h_1(x_1)h_2(x_2)-h_1(x_1')h_2(x_2')}\\
	&\leq \ab{h_2(x_2)}\ab{h_1(x_1)-h_1(x_1')}+\ab{h_1(x_1')}\ab{h_2(x_2)-h_2(x_2')}\\
	&\leq \norma{h_2}_{L^\infty}\norma{h_1}_{\textrm{Lip}}\ab{x_1-x_1'}+\norma{h_1}_{L^\infty}\norma{h_2}_{\textrm{Lip}}\ab{x_2-x_2'}\\
	&\lesssim\norma{h_1}_{\textrm{Lip}}\norma{h_2}_{\textrm{Lip}}\ab{x-x'}^{1/2}.
	\end{align*}
	It then follows by integration over $\sph{d-2}_x$ that 
	\[\ab{u_{12}(x)-u_{12}(x')}\lesssim \norma{h_1}_{\textrm{Lip}}\norma{h_2}_{\textrm{Lip}}\ab{x-x'}^{1/2}.\]
	
	We now consider the case $\ab{x}=\ab{x'}\in (0,2]$. 
	We then have that $\rho(x)=\rho(x')$. Let $\Theta\in \mathrm{SO}(d)$ denote a rotation that fixes the space $(\operatorname{span}\{x,x'\})^\perp$ and sends $x/\ab{x}$ to $x'/\ab{x'}$. It is not difficult to see that $\ab{\Theta-I}\leq \ab{x/\ab{x}-x'/\ab{x'}}$. We can then write 
	\[ u_{12}(x')=\omega_{d-2}^{-1}\int_{\sph{d-2}_x} h_1(\tfrac{x'}{2}+\rho(x)\Theta\omega) h_2(\tfrac{x'}{2}-\rho(x')\Theta\omega)\,\d\sigma_{d-2}(\omega), \]
	so that, for $\epsilon\in\{-1,1\}$,
	\begin{align*}
	\abs{\Bigl(\frac{x}{2}+\epsilon\rho(x)\omega\Bigr)-\Bigl(\frac{x'}{2}+\epsilon\rho(x')\Theta\omega\Bigr)}&\leq \frac{1}{2}\ab{x-x'}+\rho(x)\ab{(\Theta-I)\omega}\\
	&\leq \frac{1}{2}\ab{x-x'}+\rho(x)\Abs{\frac{x}{\ab{x}}-\frac{x'}{\ab{x'}}}\\
	&= \Bigl(\rho(x)+\frac{\ab{x}}{2}\Bigr)\Abs{\frac{x}{\ab{x}}-\frac{x'}{\ab{x'}}}\lesssim \Abs{\frac{x}{\ab{x}}-\frac{x'}{\ab{x'}}}.
	\end{align*}
 Reasoning as before, we conclude that
	\[ \ab{u_{12}(x)-u_{12}(x')}\lesssim \norma{h_1}_{\textrm{Lip}}\norma{h_2}_{\textrm{Lip}}\Abs{\frac{x}{\ab{x}}-\frac{x'}{\ab{x'}}}. \]
	For general $x,x'\in B_2\setminus\{0\}$ we proceed as follows. Let $y=\ab{x}x'/\ab{x'}$, so that $\ab{y}=\ab{x}$ and $x'=\la y$ for $\la=\ab{x'}/\ab{x}>0$. Then
	\begin{align*}
	\ab{u_{12}(x)-u_{12}(x')}&\leq \ab{u_{12}(x)-u_{12}(y)}+\ab{u_{12}(y)-u_{12}(x')}\\
	&\lesssim \norma{h_1}_{\textrm{Lip}}\norma{h_2}_{\textrm{Lip}}\biggl(\ab{x'-y}^{1/2}+\Abs{\frac{y}{\ab{y}}-\frac{x}{\ab{x}}}\biggr)\\
	&=\norma{h_1}_{\textrm{Lip}}\norma{h_2}_{\textrm{Lip}}\biggl(\ab{\ab{x}-\ab{x'}}^{1/2}+\Abs{\frac{x'}{\ab{x'}}-\frac{x}{\ab{x}}}\biggr)\\
	&\leq \norma{h_1}_{\textrm{Lip}}\norma{h_2}_{\textrm{Lip}}\biggl(\ab{x-x'}^{1/2}+\Abs{\frac{x}{\ab{x}}-\frac{x'}{\ab{x'}}}\biggr).
	\end{align*}
	This completes the proof of the lemma.
\end{proof}
The following consequence of Lemma \ref{lem:weakHolderd} will be useful in the forthcoming analysis. 
\begin{corollary}\label{cor:multByX}
	Let $d\geq 3$, and $x,x'\in B_2\setminus\{0\}\subset \R^d$. Then
	\begin{multline*} 
	\abs{\ab{x}(h_1\sigma_{d-1}\ast h_2\sigma_{d-1})(x)-\ab{x'}(h_1\sigma_{d-1}\ast h_2\sigma_{d-1})(x') }\\\leq C\norma{h_1}_{\textup{Lip}(\sph{d-1})}\norma{h_2}_{\textup{Lip}(\sph{d-1})}\biggl(\ab{x-x'}^{1/2}+\Abs{\frac{x}{\ab{x}}-\frac{x'}{\ab{x'}}}\biggr), 
	\end{multline*}
	for some universal constant $C<\infty$.
\end{corollary}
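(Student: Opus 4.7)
The plan is to exploit the explicit formula \eqref{eq:2fold} for $\sigma_{d-1}\ast\sigma_{d-1}$, which isolates the $|x|^{-1}$ singularity. Setting $c_d := \omega_{d-2}/2^{d-3}$ and $v_d(x) := (4-|x|^2)_+^{(d-3)/2}$, the defining identity $(h_1\sigma_{d-1}\ast h_2\sigma_{d-1})(x) = u_{12}(x)(\sigma_{d-1}\ast\sigma_{d-1})(x)$ yields
\[
|x|\,(h_1\sigma_{d-1}\ast h_2\sigma_{d-1})(x) \;=\; c_d\, u_{12}(x)\, v_d(x), \qquad x \in B_2\setminus\{0\}.
\]
It therefore suffices to estimate the difference $u_{12}(x)v_d(x) - u_{12}(x')v_d(x')$, which I would split in the usual telescoping way as
\[
\bigl(u_{12}(x)-u_{12}(x')\bigr)v_d(x') \;+\; u_{12}(x)\bigl(v_d(x)-v_d(x')\bigr).
\]

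For the first summand, Lemma \ref{lem:weakHolderd} controls $|u_{12}(x)-u_{12}(x')|$ by exactly the right-hand side that appears in the corollary, and the factor $v_d(x')$ is uniformly bounded on $B_2$ by $2^{d-3}$. For the second summand, the crude pointwise bound $\|u_{12}\|_{L^\infty} \leq \|h_1\|_{L^\infty}\|h_2\|_{L^\infty}\lesssim \|h_1\|_{\textup{Lip}(\sph{d-1})}\|h_2\|_{\textup{Lip}(\sph{d-1})}$ reduces the task to showing
\[
|v_d(x)-v_d(x')| \;\lesssim_d\; |x-x'|^{1/2} \qquad \text{for all } x,x'\in B_2.
\]
Writing $v_d(x) = g_d(|x|^2)$ with $g_d(t)=(4-t)_+^{(d-3)/2}$, and using the elementary estimate $\bigl||x|^2-|x'|^2\bigr|\leq 4|x-x'|$, this follows at once from H\"older-$\tfrac12$ continuity of $g_d$ on $[0,4]$.

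The main point of the argument is thus reduced to the regularity of the prefactor $v_d$, which is where the hypothesis $d\geq 3$ enters crucially: for $d=3$ the function $g_3$ is constant on $[0,4)$ and the $v_d$-contribution is trivial (the corollary in this case essentially collapses to Lemma \ref{lem:weakHolderd}); for $d=4$, $g_4(t)=(4-t)_+^{1/2}$ is globally H\"older-$\tfrac12$ on $[0,4]$ by subadditivity of $\sqrt{\cdot}$; for $d\geq 5$, $g_d\in C^1([0,4])$ is in fact Lipschitz. By contrast, in dimension $d=2$ the corresponding factor $(4-|x|^2)_+^{-1/2}$ is not even bounded near $|x|=2$, so no estimate of this form can hold and the restriction $d\geq 3$ is sharp. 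Combining the two contributions yields the claimed H\"older-type bound, with an implicit constant depending only on the dimension.
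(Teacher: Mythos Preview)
Your argument is correct and follows essentially the same route as the paper: write $|x|(h_1\sigma_{d-1}\ast h_2\sigma_{d-1})(x)=c_d\,u_{12}(x)\,v_d(x)$, observe that $v_d\in\Lambda_{1/2}$ on $B_2$ (trivially for $d=3$, by subadditivity of the square root for $d=4$, and by Lipschitz continuity for $d\geq 5$), and combine this with Lemma~\ref{lem:weakHolderd} via a product/telescoping split. The paper states exactly this, only more tersely; your additional case analysis of $g_d$ and the remark on why $d=2$ fails are welcome elaborations but do not deviate from the intended proof.
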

\begin{proof}
	From \eqref{eq:2fold} and \eqref{eq:defu12alld}, for $\ab{x}\leq 2$ we have that
	\[ \ab{x}(h_1\sigma_{d-1}\ast h_2\sigma_{d-1})(x)=2^{-d+3}\omega_{d-2}(4-\ab{x}^2)^{\frac{d-3}{2}}u_{12}(x). \]
	The function $(4-\ab{x}^2)^{\frac{d-3}{2}} \mathbbm{1}_{B_2}(x)$ belongs to $\Lambda_{1/2}(\R^d)$ if $d\geq 4$, and to $\Lambda_{1/2}(B_2)$ if $d=3$. The desired conclusion follows easily from this and Lemma \ref{lem:weakHolderd}.
\end{proof}

\subsection{The case $(d,n)=(3,3)$.}
In the course of this subsection only, we shall simplify the notation by writing $\d\sigma=\d\sigma_2$.
Our goal is to establish a H\"older estimate for the 3-fold convolution $h_1\sigma\ast h_2\sigma\ast h_3\sigma$, where $\{h_j\}_{j=1}^3$ are Lipschitz functions on the unit sphere $\sph{2}$. 
	
\begin{proposition}\label{prop:lip3S2}
	Given  $h_1,h_2,h_3\in \textup{Lip}(\sph{2})$,
	let
	$H=h_1\sigma\ast h_2\sigma\ast h_3\sigma$.
	Then there exists a universal constant $C<\infty$ such that, for every $x,x'\in\R^3$,
	\[|H(x)-H(x')|\leq C\prod_{j=1}^3\|h_j\|_{\textup{Lip}(\sph{2})}|x-x'|^{1/3}.\]
\end{proposition}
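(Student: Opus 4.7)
My plan is to perform a near/far splitting in the integration variable of the single-variable representation
\[ H(x)-H(x')=\int_{\sph{2}}\bigl[K(x-y)-K(x'-y)\bigr]h_3(y)\,\di\sigma(y), \]
where $K(z):=(h_1\sigma\ast h_2\sigma)(z)$ is the two-fold convolution density studied in \S\ref{subseq:weakHolder}, and to exploit Corollary~\ref{cor:multByX} to control the pointwise difference of $K$'s. By multilinearity and scaling I normalize $\norma{h_j}_{\textup{Lip}(\sph{2})}=1$ and set $\delta:=\ab{x-x'}$. Since $\sigma\ast\sigma\ast\sigma$ is bounded on $\R^3$ (as follows from \eqref{eq:2fold}), $H$ is uniformly bounded and the claim is trivial for $\delta\gtrsim 1$, so I may assume $\delta$ is small. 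Specializing \eqref{eq:2fold}--\eqref{eq:defu12alld} to $d=3$ gives $K(z)=(2\pi/\ab{z})u_{12}(z)\mathbbm{1}_{\{\ab{z}\leq 2\}}(z)$ with $\ab{u_{12}}\leq 1$, and Corollary \ref{cor:multByX} tells us that $z\mapsto\ab{z}K(z)$ has modulus of continuity $\ab{z-z'}^{1/2}+|z/\ab{z}-z'/\ab{z'}|$; these two ingredients will be the workhorses. The easy case $\min(\ab{x},\ab{x'})\leq 1/2$ (where $\ab{x-y}\geq 1/2$ for every $y\in\sph{2}$ and $K$ is Lipschitz on the relevant compact range) I would dispatch directly, so henceforth I restrict to $\ab{x},\ab{x'}\in[1/2,3]$.

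\textbf{Main steps.} I would introduce a scale $r\in(0,1/4)$ to be optimized, and split $\sph{2}=N_r\cup F_r$ with $N_r=\{y\in\sph{2}:\min(\ab{x-y},\ab{x'-y})<r\}$. On $N_r$, I would crudely bound $\ab{K(z)}\leq 2\pi/\ab{z}$ and use the coarea identity $\di\sigma(y)|_{\{\ab{x-y}=t\}}=(t/\ab{x})\,\di t\,\di\phi$ on $\sph{2}$ (derived from spherical coordinates about the axis $x/\ab{x}$), which reduces the relevant integral to
\[ \int_{\{y\in\sph{2}:\ab{x-y}<r\}}\frac{\di\sigma(y)}{\ab{x-y}}=\frac{2\pi}{\ab{x}}\int_{\max(\ab{\ab{x}-1},0)}^{\min(r,\ab{x}+1)}\di t\lesssim r, \]
so the near-region contribution is $O(r)$. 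On $F_r$, I would apply the algebraic identity
\[ K(z)-K(z')=\frac{\ab{z}K(z)-\ab{z'}K(z')}{\ab{z}}+\ab{z'}K(z')\bigl(\tfrac{1}{\ab{z}}-\tfrac{1}{\ab{z'}}\bigr), \]
bound the numerator of the first term via Corollary \ref{cor:multByX} together with the standard estimate $|z/\ab{z}-z'/\ab{z'}|\leq 2\delta/\min(\ab{z},\ab{z'})\leq 2\delta/r$, and use $\ab{\ab{z}-\ab{z'}}\leq\delta$ in the second, arriving at
\[ \ab{K(x-y)-K(x'-y)}\lesssim\frac{\delta^{1/2}}{\ab{x-y}}+\frac{\delta}{r\ab{x-y}}+\frac{\delta}{\ab{x-y}\ab{x'-y}}\quad(y\in F_r). \]
Reintegrating via the same coarea identity produces a far-region contribution of size $O(\delta^{1/2}+\delta/r)$. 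The set on which only one of $\ab{x-y},\ab{x'-y}$ exceeds $2$ (so $K$ jumps) has $\sigma$-measure $O(\delta)$ and $\ab{K}=O(1)$ there, contributing only a harmless $O(\delta)$ term.

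\textbf{Combining and main obstacle.} Adding up the three contributions,
\[ \ab{H(x)-H(x')}\leq C\bigl(r+\delta^{1/2}+\delta/r\bigr), \]
and choosing $r=\delta^{1/2}$ yields $\ab{H(x)-H(x')}\lesssim\delta^{1/2}$, which is stronger than the claimed $\delta^{1/3}$-bound once $\delta\leq 1$; combined with the trivial case $\delta\gtrsim 1$ this would complete the proof. I expect the main technical obstacle to be making the coarea bookkeeping uniform across the admissible range of $\ab{x}$: the Jacobian carries a potentially large factor $1/\ab{x}$, but it is compensated by the fact that the interval $[\ab{\ab{x}-1},\ab{x}+1]$ on which the integrand is supported has length $\sim\min(\ab{x},1)$, causing a cancellation as long as $\ab{x}$ is bounded below; the case $\ab{x}$ very small I would handle separately as indicated in the first paragraph. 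A secondary subtlety is the jump of $K$ at $\ab{z}=2$, which is absorbed as above.
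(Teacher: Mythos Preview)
Your argument is correct and follows essentially the same strategy as the paper: a near/far decomposition combined with the weak H\"older estimate of Corollary~\ref{cor:multByX} (equivalently Lemma~\ref{lem:weakHolderd}) on the far region, plus a small-measure treatment of the set where $K$ jumps across $\{|z|=2\}$. Your organization is slightly cleaner---you pull out a single scale $r$ up front rather than running separate threshold arguments for the two terms $I$ and $II$---and in fact you extract a sharper H\"older exponent $1/2$ rather than the paper's $1/3$: the paper, when estimating its term $II$, bounds $\delta/(|x-\omega|\,|x'-\omega|)$ pointwise by $\delta/\rho^2$ before integrating, whereas you bound only one factor by $1/r$ and integrate the remaining $1/|x-y|$, which is uniformly in $L^1(\sigma)$; this converts $\rho+\delta/\rho^2$ into $r+\delta/r$ and improves the optimum from $\delta^{1/3}$ to $\delta^{1/2}$.
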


\begin{proof}
	By homogeneity, we may assume $\|h_j\|_{\textup{Lip}}=1$, $1\leq j\leq 3$.
	Since the function $H$ is compactly supported, it is enough to consider $x,x'\in\R^3$ for which\footnote{We will write $|x-x'|\ll 1$ to mean that the quantity $|x-x'|$ is sufficiently small for the purposes of the corresponding proof. For instance, in the course of the proof of Proposition \ref{prop:lip3S2}, we can and will assume that $|x-x'|\leq {100}^{-1}$.} $|x-x'|\ll 1$.
	From \eqref{eq:2fold} and \eqref{eq:defu12alld}, the function 
	$u_{12}(x):=(2\pi)^{-1}|x|(h_1\sigma\ast h_2\sigma)(x)$ is given by
	\begin{equation}\label{eq:defu12}
	u_{12}(x)=\fint_{\Gamma_x} h_1(\nu) h_2(x-\nu)\,\d\sigma_x(\nu),
	\end{equation}
	where $\Gamma_x=\sph{2}\cap(x+\sph{2})$. 
	We further have that
	\begin{align*}
	H(x)
	&=\int_{\sph{2}} (h_1\sigma\ast h_2\sigma)(x-\omega)h_3(\omega)\,\d \sigma(\omega)
	=2\pi\int_{\sph{2}}\frac{\mathbbm{1}_{|x-\omega|<2}(\omega)}{|x-\omega|} u_{12}(x-\omega)h_3(\omega)\,\d\sigma(\omega),
	\end{align*}
	and so
	\begin{multline*}
	(2\pi)^{-1}(H(x)-H(x'))
	=\int_{\sph{2}} \frac{\mathbbm{1}_{|x'-\omega|<2}(\omega)}{|x'-\omega|}\left(u_{12}(x-\omega)-u_{12}(x'-\omega)\right) h_3(\omega)\,\d \sigma(\omega)\\
	+\int_{\sph{2}} \left(\frac{\mathbbm{1}_{|x-\omega|<2}(\omega)}{|x-\omega|}-\frac{\mathbbm{1}_{|x'-\omega|<2}( \omega)}{|x'-\omega|}\right)u_{12}(x-\omega) h_3(\omega)\,\d \sigma(\omega).
	\end{multline*}
	We denote the integrals on the right-hand side of the latter identity by $I$ and $II$, respectively.
	We start by estimating the first integral.	\\
	
	{\bf Estimating $I$}. The fist step is to restrict the domain of integration to the region where $x-\omega,x'-\omega\in B_2$, plus a remainder which is $O(\ab{x-x'})$. With this purpose in mind, decompose $\sph{2}=U\cup U'\cup V\cup W$, where 

	\begin{gather}
	\label{eq:decomposeB2d3}
	U:=\{\omega\in \sph{2}: |x'-\omega|<2\leq |x-\omega|\},\, U':=\{\omega\in \sph{2}: |x-\omega|<2\leq |x'-\omega|\} ,\\
	\nonumber
 V:=\{\omega\in \sph{2}: |x-\omega|,|x'-\omega|<2 \},\,	W:=\{\omega\in \sph{2}: 2\leq  |x'-\omega|,|x-\omega| \}.
	\end{gather}
	The integrand of $I$ vanishes on the region $U' \cup W$, and so  we are left to analyze the integrals over $U$ and $V$. We claim that $\sigma(U)=O(\ab{x-x'})$. Indeed, if $\omega\in U$, then $\ab{x'-\omega}<2\leq \ab{x-\omega}$, so that as $\ab{x'-\omega}\geq \ab{x-\omega}-\ab{x-x'}\geq 2-\ab{x-x'}$ we obtain
	\begin{equation}\label{eq:UcontainedAnnulus}
	U\subseteq \{\omega\in\sph{2}\colon 2-\ab{x-x'}\leq \ab{x'-\omega}\leq 2 \}.
	\end{equation}
	This shows that the region $U$ is contained in the intersection of $\sph{2}$ with a spherical shell of thickness $\ab{x-x'}$ centered at $x'$. The claim follows.
		The contribution of $U$ to the integral $I$  can then be bounded in the following way:
	\begin{align*}
	\int_{U}\frac{\mathbbm{1}_{|x'-\omega|<2}(\omega)}{|x'-\omega|} \ab{u_{12}(x'-\omega) h_{3}(\omega)}\,\d\sigma(\omega)
	\leq
	\int_{U}\frac{\mathbbm{1}_{|x'-\omega|<2}(\omega)}{|x'-\omega|} \d \sigma(\omega).
	\end{align*}
	If $\omega\in U$, then $|x'-\omega|\geq 2-|x-x'|>1$ since $|x-x'|\ll1$.
	As a consequence, the latter integral can be crudely bounded as follows
	\begin{equation}\label{eq:boundOnU}
	\int_{U} \frac{\d \sigma(\omega)}{|x'-\omega|}\leq \sigma(U)\lesssim |x-x'|.
	\end{equation}
	To handle the contribution of the region $V$, note that Lemma \ref{lem:weakHolderd} implies the pointwise estimate
	\begin{equation}\label{eq:ChristShao}
	|u_{12}(x-\omega)-u_{12}(x'-\omega)|\lesssim |x-x'|^{1/2}+\left|\frac{x-\omega}{|x-\omega|}-\frac{x'-\omega}{|x'-\omega|}\right|.
	\end{equation}
	The contribution of the region 
	\begin{equation}\label{eq:defR}
	R:=\bigg\{\omega\in V: \left|\frac{x-\omega}{|x-\omega|}-\frac{x'-\omega}{|x'-\omega|}\right|\leq|x-x'|^{1/2}\bigg\}
	\end{equation}
	to the integral $I$ is easy to estimate.
	In view of \eqref{eq:ChristShao} and \eqref{eq:defR}, 
	\begin{align*}
	\Big|\int_{R} \frac{\mathbbm{1}_{|x'-\omega|<2}(\omega)}{|x'-\omega|}&\left(u_{12}(x-\omega)-u_{12}(x'-\omega)\right) h_3(\omega)\,\d\sigma(\omega) \Big|\\
	&\lesssim  \left(\int_R \frac{\mathbbm{1}_{|x'-\omega|<2}(\omega)}{|x'-\omega|}\,\d \sigma(\omega)\right)  |x-x'|^{1/2}
	\lesssim  |x-x'|^{1/2}.
	\end{align*}
	In the second estimate, we used the elementary fact that there exists a universal constant $C<\infty$, such that 
	$$\int_R \frac{\mathbbm{1}_{|x'-\omega|<2}(\omega)}{|x'-\omega|}\,\d \sigma(\omega)\leq
	\int_{\sph{2}} \frac{\d \sigma(\omega)}{|x'-\omega|}\leq C<\infty,$$
	 for all $x'\in\R^3$.
	If $\omega\in V\setminus R$, then 
	\begin{equation}\label{eq:smallThanSqrt}
	|x-x'|^{1/2}<\left|\frac{x-\omega}{|x-\omega|}-\frac{x'-\omega}{|x'-\omega|}\right|
	\leq \frac{2\ab{x-\omega}\ab{x-x'}}{|x-\omega|\,|x'-\omega|},
	\end{equation}
	from where we obtain $|x'-\omega|\leq 2|x-x'|^{1/2}$. The contribution of this region can then be estimated as follows:
	\begin{align*}
	\Big|\int_{V\setminus R} \frac{\mathbbm{1}_{|x'-\omega|<2}(\omega)}{|x'-\omega|}&\left(u_{12}(x-\omega)-u_{12}(x'-\omega)\right)h_3(\omega)\,\d \sigma(\omega)\Big|\\
	&\lesssim\int_{V\setminus R} \frac{\mathbbm{1}_{|x'-\omega|<2}(\omega)}{|x'-\omega|}\left|\frac{x-\omega}{|x-\omega|}-\frac{x'-\omega}{|x'-\omega|}\right| h_3(\omega)\,\d \sigma(\omega)\\
	&\lesssim  \left(\int_{\sph{2}\cap B(x', 2|x-x'|^{1/2})} \frac{\mathbbm{1}_{|x'-\omega|<2}(\omega)}{|x'-\omega|}\,\d \sigma(\omega)\right) \|h_3\|_{L^\infty} \\
	&\lesssim  |x-x'|^{1/2}.
	\end{align*}
	From the third to the fourth line, we used the fact that
	\begin{equation}\label{eq:boundOnSmallBall}
	\phi(x'):=\int_{\sph{2}\cap B(x',\eps)} \frac{\d \sigma(\omega)}{ |x'-\omega|}
	\end{equation}
	defines a radial function of $x'$ which satisfies  
	\[\phi(x')\lesssim {\sigma(\sph{2}\cap B(x',\eps) )}^{1/2}\lesssim \eps.\] 
	This concludes the verification of the bound $|I|\lesssim|x-x'|^{1/2}$.\\
	
	{\bf Estimating $II$.}
	The integral $II$ is bounded by
	$$\int_{\sph{2}} \left|\frac{\mathbbm{1}_{|x-\omega|<2}(\omega)}{|x-\omega|}-\frac{\mathbbm{1}_{|x'-\omega|<2}(\omega)}{|x'-\omega|}\right|\,\d\sigma (\omega).$$
	By symmetry, it is enough to consider
	\begin{equation}\label{eq:aftersymmetry}
	\int_{T} \left(\frac{\mathbbm{1}_{|x-\omega|<2}(\omega)}{|x-\omega|}-\frac{\mathbbm{1}_{|x'-\omega|<2}(\omega)}{|x'-\omega|}\right)\,\d\sigma(\omega),
	\end{equation}
	where the integral is taken over the region 
	$$T:=\biggl\{\omega\in \sph{2}:\frac{\mathbbm{1}_{|x-\omega|<2}(\omega)}{|x-\omega|}>\frac{\mathbbm{1}_{|x'-\omega|<2}(\omega)}{|x'-\omega|}\biggr\}.$$
	Decompose $T=U''\cup V''$, where
	\begin{align*}
	U''&:=\{\omega\in T: |x-\omega|<2\leq |x'-\omega|\},\\
	V''&:=\{\omega\in T: |x-\omega|<|x'-\omega|<2 \}.
	\end{align*}
	We have that $U''=U'\cap T$, and therefore  $\sigma(U'')=O(|x-x'|)$.
	Moreover,
	$$\int_{U''} \left(\frac{\mathbbm{1}_{|x-\omega|<2}(\omega)}{|x-\omega|}-\frac{\mathbbm{1}_{|x'-\omega|<2}(\omega)}{|x'-\omega|}\right)\d\sigma(\omega) 
	=
	\int_{U''} \frac{\d \sigma(\omega)}{|x-\omega|}\lesssim\ab{x-x'},$$
	where the last inequality follows as in \eqref{eq:boundOnU}.
	The contribution of the region $V''$ to the integral in \eqref{eq:aftersymmetry} is slightly more delicate to estimate.
	We consider two cases as before. Outside the ball $|x'-\omega|\geq |x-x'|^{1/3}$, 
	we use the estimate $\ab{x-\omega}\geq\ab{x'-\omega}-\ab{x-x'}\gtrsim |x-x'|^{1/3}$, which implies
	\begin{align*}
	\left|\frac{1}{|x-\omega|}-\frac{1}{|x'-\omega|}\right|&=\left\vert\frac{\ab{x'-\omega}-\ab{x-\omega}}{\ab{x-\omega}\ab{x'-\omega}}\right\vert\leq \frac{\ab{x'-x}}{\ab{x-\omega}\ab{x'-\omega}}\\
	&\lesssim |x-x'|^{-2/3} |x-x'|=|x-x'|^{1/3}.
	\end{align*}
	Inside the ball $|x'-\omega|\leq |x-x'|^{1/3}$, we also have $\ab{x-\omega}\leq \ab{x-x'}^{1/3}$, as $\omega\in V''$. The contribution of this region to the integral in \eqref{eq:aftersymmetry} is at most two times the integral
	$$\phi(x')=\int_{\sph{2}\cap B(x',\delta)} \frac{\d \sigma(\omega)}{|x'-\omega|},$$
	where $\delta=|x-x'|^{1/3}$. 
	Proceeding as in \eqref{eq:boundOnSmallBall}, one is led to the bound $\phi(x')\lesssim\delta$, whence the term in question is $O(|x-x'|^{1/3})$.
	This establishes the bound $|II|\lesssim |x-x'|^{1/3}$.
	The proof of the proposition is now complete.
\end{proof}

\begin{remark}\label{rem:higherconv}
	Proposition \ref{prop:lip3S2} implies that if $n\geq 4$, then $G_n:=h_1\sigma\ast \dots\ast h_n\sigma\in \Lambda_{1/3}(\R^3)$ whenever $\{h_j\}_{j=1}^3\subset \textrm{Lip}(\sph{2})$ and $\{h_j\}_{j=4}^n\subset L^1(\sph{2})$. This can be improved under the additional assumption $\{h_j\}_{j=1}^n\subset \textrm{Lip}(\sph{2})$, in which case we have, for instance, that $G_6\in \Lambda_{2/3}(\R^3)$.
	In dimensions $d\geq 4$, a similar argument to that in the proof of Proposition \ref{prop:lip3S2} shows that, if $\{h_j\}_{j=1}^3\subset \textrm{Lip}(\sph{d-1})$, then $h_1\sigma_{d-1}\ast h_{2}\sigma_{d-1}\ast h_3\sigma_{d-1}\in\Lambda_{\alpha}(\R^d)$, for some $\alpha>0$. Consequently, if $n\geq 3$ and $\{h_j\}_{j=1}^n\subset \textrm{Lip}(\sph{d-1})$, then $h_1\sigma_{d-1}\ast \dots\ast h_n\sigma_{d-1}\in\Lambda_{\alpha}(\R^d)$, for some $\alpha>0$.
\end{remark}

\subsection{The case $(d,n)=(2,4)$.}\label{sec:24}
In the course of this subsection only, we shall simplify the notation by writing $\d\sigma=\d\sigma_1$.
Our goal is to establish a H\"older-type estimate for the 4-fold convolution $h_1\sigma\ast h_2\sigma\ast h_3\sigma\ast h_4\sigma$, where $\{h_j\}_{j=1}^4$ are Lipschitz functions on the unit circle $\sph{1}$. 
We start with some preparatory work. 
As in \S \ref{subseq:weakHolder}, let
\begin{align}
u_{12}(x)&=\frac{1}{2}(h_1(x_1)h_2(x_2)+h_1(x_2)h_2(x_1))\mathbbm{1}_{B_2}(x),\label{eq:u12def}\\
u_{34}(x)&=\frac{1}{2}(h_3(x_1)h_4(x_2)+h_3(x_2)h_4(x_1))\mathbbm{1}_{B_2}(x),\label{eq:u34def}
\end{align}
 both of which satisfy the conclusion of Lemma \ref{lem:weakHolderd}. 
For brevity, we write 
\begin{equation}\label{eq:defF}
F(x):=(\sigma\ast\sigma)(x)=4\ab{x}^{-1}(4-\ab{x}^2)^{-1/2}\mathbbm{1}_{B_2}(x),
\end{equation}
as in \eqref{eq:2fold} with $d=2$. We will make repeated use of the upper bound
\begin{align}\label{eq:upperBoundSigma2}
\frac{1}{\ab{x}\sqrt{4-\ab{x}^2}}=\frac{\sqrt{4-\ab{x}^2}}{4\ab{x}}+\frac{\ab{x}}{4\sqrt{4-\ab{x}^2}}\leq \frac{1}{\ab{x}}+\frac{1}{\sqrt{2-\ab{x}}},\text{ for all }\ab{x}\leq 2,
\end{align}
together with the estimate
\begin{equation}\label{eq:4foldsigma1}
\sigma^{\ast 4}(x)\lesssim(1+\ab{\log\ab{x}})\mathbbm{1}_{B_4}(x),\text{ for all } x\in\R^2.
\end{equation}
Inequality \eqref{eq:4foldsigma1} follows from \cite[Eq.\@ (3.21)]{OSQ19}, and in particular implies that 
\begin{equation}\label{eq:LinftySigma4}
\ab{\cdot}^{\beta}\sigma^{\ast 4}\in L^\infty(\R^2)\text{, for every } \beta>0.
\end{equation}
Setting $H_\gamma(x)=\ab{x}^\gamma\bigl((u_{12}F)\ast (u_{34}F)\bigr)(x)$,
we then have that $H_\gamma\in L^\infty(\R^2)$, for any $\gamma>0$ and $\{h_j\}_{j=1}^4\subset L^\infty(\sph{1})$.
This will be used in Proposition \ref{prop:lip4circle} below. 
The following preparatory result  quantifies the smallness of the function $(\mathbbm{1}_E(\sigma\ast\sigma))\ast (\sigma\ast\sigma)$, for certain sets $E\subset\R^2$ of small Lebesgue measure.

\begin{lemma}\label{lem:smallIntegralFF}
Set $F=\sigma\ast\sigma$.  
Let $x\in B_4\subset\R^2$. 
Then, for every $\gamma\in(0,1]$ and $s\in(0,\frac{\gamma}{2(\gamma+1)})$, there exists a constant $C_{\gamma,s}<\infty$ such that, for all $\eps\in(0,1)$, 
	\begin{gather}
	\label{eq:ineqInU}
	\ab{x}^\gamma\int_{A(x,\eps)} F(y)F(x-y)\d y\leq C_{\gamma,s}\eps^{\min\{\frac{1}{6},\frac{\gamma}{2(\gamma+1)}-s\}},\\
	\label{eq:ineqInV}
	\ab{x}^\gamma\int_{B_2\cap B(x,\eps)} F(y)F(x-y)\d y\leq C_{\gamma,s}\eps^{\min\{\frac{1}{2},\gamma-s\}},
	\end{gather}
	where $A(x,\eps):=\{y\in B_2\colon 2-\eps\leq \ab{x-y}\leq 2 \}$. 
\end{lemma}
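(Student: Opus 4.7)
The plan is to decompose the integrand via the pointwise bound \eqref{eq:upperBoundSigma2}, $F(z)\lesssim |z|^{-1}+(2-|z|)^{-1/2}$ on $B_2$, so that each integral becomes a sum of cross-terms. On $B_2\cap B(x,\eps)$ the factor $F(x-y)$ is dominated by $|x-y|^{-1}$ (since $|x-y|\leq\eps\leq 1$); on $A(x,\eps)$ it is dominated by $(2-|x-y|)^{-1/2}$ (since $|x-y|\geq 2-\eps\geq 1$). Expanding $F(y)$ itself then yields two principal terms per inequality, which I would estimate by a direct computation in polar coordinates together with a case split on $|x|$ against an $\eps$-threshold.

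For \eqref{eq:ineqInV}, the two terms are $J_1=\int|y|^{-1}|x-y|^{-1}\d y$ and $J_2=\int(2-|y|)^{-1/2}|x-y|^{-1}\d y$ over $B_2\cap B(x,\eps)$. For $J_1$, splitting on $|x|\lessgtr 2\eps$, the standard logarithmic estimate for the 2D Riesz kernel combined with optimizing $t\mapsto t^\gamma\log(1/t)$ gives $|x|^\gamma J_1\lesssim_\gamma\eps^\gamma$ in the small-$|x|$ regime, while $|y|\geq|x|/2$ together with $|x|\geq 2\eps$ and $\gamma\leq 1$ gives the same in the large-$|x|$ regime. For $J_2$, the pointwise bound $(2-|y|)^{-1/2}\leq\eps^{-1/2}$ applies when $|x|\leq 2-2\eps$, and a H\"older estimate in polar coordinates around $\partial B_2$ handles the case $|x|>2-2\eps$, both yielding $|x|^\gamma J_2\lesssim\eps^{1/2}$. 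Combining these would give \eqref{eq:ineqInV}.

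For \eqref{eq:ineqInU}, the corresponding terms are $K_1=\int|y|^{-1}(2-|x-y|)^{-1/2}\d y$ and $K_2=\int(2-|y|)^{-1/2}(2-|x-y|)^{-1/2}\d y$ over $A(x,\eps)$. The term $K_1$ would be handled as $J_2$ via the substitution $z=x-y$. The main obstacle is $K_2$: both factors are boundary square-root singularities, and a direct H\"older fails because $(2-|y|)^{-1/2}$ and $(2-|x-y|)^{-1/2}\mathbf{1}_{A}$ each lie in $L^p$ only for $p<2$, contradicting $1/p+1/p'=1$. My proposed remedy is a double dyadic decomposition $A(x,\eps)=\bigcup_{j,k\geq 0}I_{j,k}$, where $I_{j,k}$ is the subregion on which $2-|y|$ is of order $2^{-j}\eps$ and $2-|x-y|$ is of order $2^{-k}\eps$, and to estimate $|I_{j,k}|$ via the change of variables $(|y|,\angle(y,x))\mapsto(|y|,|x-y|)$, whose Jacobian is of order $|x-y|/(|x|\sin\angle(y,x))$. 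For $|x|$ bounded away from $4$, $\sin\angle(y,x)$ is bounded below, giving $|I_{j,k}|\lesssim 2^{-j-k}\eps^2/|x|$, and the resulting geometric sum over $(j,k)$ yields $K_2\lesssim\eps/|x|$ for $|x|\geq\eps$, while a direct polar computation gives $K_2\lesssim\log(\eps/|x|)$ for $|x|\leq\eps$. Multiplying by $|x|^\gamma$ and combining the two regimes would yield the stated exponent, with the small loss $\eps^{-s}$ absorbing logarithms and the mild Jacobian degeneracy near $|x|=4$. The most delicate step I anticipate is making the Jacobian estimate uniform across the full range $x\in B_4$, in particular across the transition $|x|\lessgtr 2$ where the geometry of $A(x,\eps)$ changes qualitatively.
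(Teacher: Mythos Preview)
Your overall strategy coincides with the paper's: the same pointwise splitting of $F$, the same four cross-terms (your $J_1,J_2,K_1,K_2$ are the paper's $\phi_3,\phi_4,\phi_1,\phi_2$), and the same bipolar Jacobian (your map $(\ab{y},\theta)\mapsto(\ab{y},\ab{x-y})$ is exactly the $(r,s)$ coordinates of \eqref{eq:bipolarJacobian}). Your handling of \eqref{eq:ineqInV} is essentially correct and parallels the paper's.

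There is, however, a genuine gap in your treatment of $K_2$. The decomposition $A(x,\eps)=\bigcup_{j,k\geq 0}I_{j,k}$ with $2-\ab{y}\sim 2^{-j}\eps$ only covers the subregion $\{\ab{y}\geq 2-\eps\}$; the complement $\{y\in A(x,\eps):\ab{y}<2-\eps\}$ is nonempty (for instance when $\ab{x}$ is near $2$) and is missing from your sum. On that complement $(2-\ab{y})^{-1/2}$ ranges up to $\eps^{-1/2}$, and a crude bound there gives only $O(1)$. The paper fixes this by introducing a free parameter $\delta\in(0,\tfrac12)$: it first peels off the region $\{2-\ab{y}\geq\eps^{2\delta}\}$ (contributing $\lesssim\eps^{1/2-\delta}$) and runs the double dyadic only on the remainder. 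Equally important, the paper places the small-$\ab{x}$ threshold at $\ab{x}\leq\eps^\delta$ rather than $\ab{x}\leq\eps$, invoking the a priori bound $\ab{x}^s\sigma^{\ast4}(x)\in L^\infty$ there; the exponent $\tfrac{\gamma}{2(\gamma+1)}-s$ in the statement arises precisely from optimizing in $\delta$. With your threshold $\ab{x}=\eps$ and the corrected dyadic (which yields $K_2\lesssim\eps^{1/2}/\ab{x}$, not $\eps/\ab{x}$), the resulting bound $\ab{x}^\gamma K_2\lesssim\ab{x}^{\gamma-1}\eps^{1/2}$ fails to close for $\gamma\leq\tfrac12$. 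A secondary issue: after the substitution $z=x-y$, the domain of $K_1$ is a thin annulus near $\ab{z}=2$, not a small ball, so the $J_2$ argument does not transfer as stated; when $\ab{x}$ is near $2$ both singularities interact and the paper again needs a dyadic-plus-bipolar computation (this is where the $\eps^{1/6}$ in \eqref{eq:ineqInU} originates).
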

Before embarking in the proof of Lemma \ref{lem:smallIntegralFF}, we discuss a coordinate system which will prove convenient for the argument. Let $x\in\R^2$, $x\neq 0$ be given. A point $y\in\R^2$ is uniquely determined by the pair $(\ab{y},\ab{x-y})$, up to reflection with respect to the line spanned by $x$. This gives rise to the so-called (two-center) \textit{bipolar coordinates}, defined by $(r,s)=(\ab{y},\ab{x-y})$; see \cite[\S 2]{Fo00} and \cite[\S 2.2]{FOS17} for the use of this coordinate system in a related setting. The map $y\mapsto (r,s)=(\ab{y},\ab{x-y})$ is a two-to-one map from $\R^2\setminus \operatorname{span}\{x\}$ to the region determined by the relations $\ab{r-s}<\ab{x}<r+s$, whose Jacobian is given by
	\begin{equation}\label{eq:bipolarJacobian}
	\d y=\frac{2rs}{(\ab{x}^2-(r-s)^2)^{\frac{1}{2}}((r+s)^2-\ab{x}^2)^{\frac{1}{2}}}\d r\d s.
	\end{equation} 
After the change of variables $a=r-s$ $b=r+s$, the Jacobian becomes
	\begin{equation}\label{eq:bipolar2Jacobian}
	\d y=\frac{(a+b)(b-a)}{4(\ab{x}^2-a^2)^{\frac{1}{2}}(b^2-\ab{x}^2)^{\frac{1}{2}}}\d a\d b.
	\end{equation} 
\begin{proof}[Proof of Lemma \ref{lem:smallIntegralFF}]
	From \eqref{eq:LinftySigma4}, it follows that the left-hand sides of \eqref{eq:ineqInU}, \eqref{eq:ineqInV} define bounded functions of $x$, and therefore $\eps>0$ can be taken as small as needed in the argument below. We may also assume that $x\neq 0$ otherwise \eqref{eq:ineqInU} and \eqref{eq:ineqInV} are trivial.
	
	Let us start with \eqref{eq:ineqInU}. Note that $\ab{A(x,\eps)}\lesssim \eps$,
	and that if $y\in A(x,\eps)$, then $|x-y|\geq 2-\eps>1$.
	As a consequence, the left-hand side of \eqref{eq:ineqInU} can be bounded as follows:
	$$\ab{x}^\gamma\int_{A(x,\eps)} \frac{\d y}{|y|\sqrt{4-\ab{y}^2}\ab{x-y}\sqrt{4-|x-y|^2}}\lesssim\ab{x}^\gamma\int_{A(x,\eps)} \frac{\d y}{|y|\sqrt{4-\ab{y}^2}\sqrt{2-|x-y|}}.$$
	We then use the upper bound \eqref{eq:upperBoundSigma2},
	\begin{equation}\label{eq:decomposeSigma22}
	\frac{1}{\ab{y}\sqrt{4-\ab{y}^2}}\leq \frac{1}{\ab{y}}+\frac{1}{\sqrt{2-\ab{y}}},
	\text{ for }\ab{y}\leq 2,
	\end{equation}
	and are left to analyze the following integrals:
	\[ \phi_1(x,\eps):=\ab{x}^\gamma\int_{A(x,\eps)}\frac{\d y}{\ab{y}\sqrt{2-\ab{x-y}}},\quad \phi_2(x,\eps):=\ab{x}^\gamma\int_{A(x,\eps)}\frac{\d y}{\sqrt{2-\ab{y}}\sqrt{2-\ab{x	-y}}}.\]
	{\bf Analysis of $\phi_1(x,\eps)$.}
	We perform a dyadic decomposition of $A(x,\eps)$ via
	\begin{equation}\label{eq:dyadicDecAnnulus}
	A_j=\{y\in B_2\colon 2-2^{-j}\eps\leq \ab{x-y}\leq 2-2^{-(j+1)}\eps \},\quad j\in\N_0,
	\end{equation}
so that
	\begin{align*}
	\phi_1(x,\eps)=\ab{x}^\gamma\int_{A(x,\eps)}\frac{\d y}{\ab{y}\sqrt{2-\ab{x-y}}}
	&\simeq \ab{x}^\gamma\sum_{j=0}^\infty(2^{-j}\eps)^{-1/2}\int_{A_j}\frac{\d y}{\ab{y}}.
	\end{align*}
	Further, consider $\delta\in(0,\frac{1}{2})$ and decompose $A_j=A_{j,1}\cup A_{j,2}$ where $A_{j,1}=\{y\in A_j:\ab{y}> (2^{-j}\eps)^{1/2-\delta}\}$ and $A_{j,2}=\{y\in A_j:\ab{y}\leq  (2^{-j}\eps)^{1/2-\delta}\}=A_j\cap B_{(2^{-j}\eps)^{1/2-\delta}}$. Then the contribution of $\{A_{j,1}\}_{j\geq 0}$ to $\phi_1(x,\eps)$ can be bounded as follows:
\begin{equation}\label{eq:contribAj1}
\ab{x}^\gamma\sum_{j=0}^\infty(2^{-j}\eps)^{-1/2}\int_{A_{j,1}}\frac{\d y}{\ab{y}}\lesssim 
\sum_{j=0}^\infty(2^{-j}\eps)^{-1+\delta}\ab{A_{j,1}}\lesssim \sum_{j=0}^\infty(2^{-j}\eps)^{-1+\delta}2^{-j}\eps\lesssim_\delta\eps^{\delta},
\end{equation}
where we used that $\ab{A_{j,1}}\leq\ab{A_j}\lesssim 2^{-j}\eps$. We now proceed to bound the contribution of the $A_{j,2}$'s with the help of bipolar coordinates. We have
	\begin{align*}
	\int_{A_{j,2}}\frac{\d y}{\ab{y}}&\simeq \int_{\substack{
	\ab{r-s}<\ab{x}<r+s\\
	0\leq r\leq (2^{-j}\eps)^{1/2-\delta}\\
	2-2^{-j}\eps\leq s\leq 2-2^{-(j+1)}\eps
	}}
	\frac{s\d r\d s}{\sqrt{\ab{x}^2-(r-s)^2}\sqrt{(r+s)^2-\ab{x}^2}}\\
	&\simeq \int_{\substack{
	\ab{a}<\ab{x}<b\\
	0\leq \frac{a+b}{2}\leq (2^{-j}\eps)^{1/2-\delta}\\
	2-2^{-j}\eps\leq \frac{b-a}{2}\leq 2-2^{-(j+1)}\eps
	}}
	\frac{\d a\d b}{\sqrt{\ab{x}^2-a^2}\sqrt{b^2-\ab{x}^2}}\\
	&\simeq \int_{\substack{
	\ab{a}<\ab{x}<b\\
	0\leq \frac{a+b}{2}\leq (2^{-j}\eps)^{1/2-\delta}\\
	2-2^{-j}\eps\leq \frac{b-a}{2}\leq 2-2^{-(j+1)}\eps
	}}
	\frac{\d a\d b}{\sqrt{\ab{x}-\ab{a}}\sqrt{b-\ab{x}}}\\
	&\simeq \int_{\substack{
	\ab{a}<\ab{x}\\
	-2+2^{-(j+1)}\eps\leq a\leq -2+2^{-j}\eps+(2^{-j}\eps)^{1/2-\delta}
	}}\int_{\substack{
	b>\ab{x}\\
	-a\leq b\leq -a+2(2^{-j}\eps)^{1/2-\delta}\\
	a+4-2^{-j+1}\eps\leq b\leq a+4-2^{-j}\eps
	}}
	\frac{\d a\d b}{\sqrt{\ab{x}-\ab{a}}\sqrt{b-\ab{x}}}\\
	&\lesssim \max\{(2^{-j}\eps)^{1/2},(2^{-j}\eps)^{1/4-\delta/2} \}\min\{(2^{-j}\eps)^{1/2},(2^{-j}\eps)^{1/4-\delta/2} \}\\
	&=(2^{-j}\eps)^{1/2}(2^{-j}\eps)^{1/4-\delta/2},
	\end{align*}
	where we used that in the domain of integration $\ab{a}\simeq b\simeq 1$, so that $\sqrt{b+\ab{x}}\simeq 1$ and $\sqrt{\ab{x}+\ab{a}}\simeq 1$. Therefore, the contribution of $\{A_{j,2}\}_{j\geq 0}$ to $\phi_1(x,\eps)$ can be bounded as follows:
	\begin{align}
	\ab{x}^\gamma\sum_{j=0}^\infty(2^{-j}\eps)^{-1/2}\int_{A_{j,2}}\frac{\d y}{\ab{y}}&\lesssim \sum_{j\geq 0}(2^{-j}\eps)^{-1/2}(2^{-j}\eps)^{1/2}(2^{-j}\eps)^{1/4-\delta/2}\nonumber\\
	\label{eq:contribAj2}
	&= \sum_{j\geq 0}(2^{-j}\eps)^{1/4-\delta/2}\simeq_\delta \eps^{1/4-\delta/2}.
	\end{align}
	Taking $\delta=\frac16$,  we conclude from \eqref{eq:contribAj1} and \eqref{eq:contribAj2} that $\phi_1(x,\eps)\lesssim \eps^{1/6}$, which is an acceptable contribution, in the sense that it is smaller than a multiple of the right-hand side of \eqref{eq:ineqInU}.\\
	
	\noindent{\bf Analysis of $\phi_2(x,\eps)$.} The contribution of 
	 the region $A':=\{y\in A(x,\eps): \sqrt{2-\ab{y}}\geq \eps^\delta\}$ to $\phi_2(x,\eps)$ can be estimated as follows: 
	\begin{align}
	\ab{x}^\gamma\int_{A'}\frac{\d y}{\sqrt{2-\ab{y}}\sqrt{2-\ab{x-y}}}&\leq\ab{x}^\gamma \eps^{-\delta}\int_{A'}\frac{\d y}{\sqrt{2-\ab{x-y}}}
	\lesssim \eps^{-\delta}\int_{A(x,\eps)}\frac{\d y}{\sqrt{4-\ab{x-y}^2}}\nonumber\\
	\label{eq:yNear2}
	&=\eps^{-\delta}\int_0^{2\pi}\int_{2-\eps}^2 \frac{r}{\sqrt{4-r^2}}\d r\d\te
	\lesssim \eps^{\frac{1}{2}-\delta}.
	\end{align}
	If $y\in A'':=A(x,\eps)\setminus A'$, then $2-\eps^{2\delta}\leq \ab{y}\leq 2$ and $2-\eps\leq \ab{x-y}\leq 2$. Therefore $A''$ is contained in the intersection of two annuli of small thickness and located at distance comparable to $2$ from the origin. We may further assume that $\ab{x}\geq \eps^{\delta}$, for otherwise, given any $s\in(0,\gamma)$,
	\begin{equation}\label{eq:phi2SmallX}
	\phi_2(x,\eps)\leq (\eps^\delta)^{(\gamma-s)}\ab{x}^{s}\sigma^{\ast 4}(x)\lesssim_{s} \eps^{(\gamma-s){\delta}},
	\end{equation}
	so that $\phi_2(x,\eps)=O_\alpha(\eps^\alpha)$, for every $\alpha\in(0,\gamma\delta)$. We now apply the same dyadic decomposition of $A(x,\eps)$ as in \eqref{eq:dyadicDecAnnulus} together with a similar one on the second annulus,
	\[ D_k=\{y\in B_2\colon 2-2^{-k}\eps^{2\delta}\leq\ab{y}<2-2^{-(k+1)}\eps^{2\delta} \},\; k\in\N_0, \]
	so that $A''=\cup_{j,k\geq 0}A_j\cap D_k$. This yields
	\begin{equation}\label{eq:contribAdoubleprime}
	\ab{x}^\gamma\int_{A''}\frac{1}{\sqrt{2-\ab{y}}}\frac{1}{\sqrt{2-|x-y|}}\d y
	\lesssim\ab{x}^\gamma\eps^{-1/2-\delta}\sum_{j,k\geq 0} 2^{(j+k)/2}\ab{A_j\cap D_k}.
	\end{equation}
	We now use bipolar coordinates to bound $\ab{A_j\cap D_{k}}$. First consider the case where, in addition to $\ab{x}\geq \eps^\delta$, we have $\ab{x}\leq 4-\eps^\delta$, so that $A_{j}$ and $D_k$ intersect {\it transversely};  
	the intersection consists of two connected components which are symmetric with respect to the line spanned by $x$.
	Using bipolar coordinates, we have that
\begin{align}
\ab{A_j\cap D_{k}} 
	&\simeq 
	\int_{\substack{
	\ab{r-s}< \ab{x}<r+s\\
	2-2^{-k}\eps^{2\delta}\leq r\leq 2-2^{-(k+1)}\eps^{2\delta}\\
	2-2^{-j}\eps\leq s\leq 2-2^{-(j+1)}\eps
	}} \frac{rs\d r\ds}{\sqrt{\ab{x}^2-(r-s)^2}\sqrt{(r+s)^2-\ab{x}^2}}\nonumber\\
	\label{eq:bipolarAjDk}
	&\simeq\int_{\substack{
	\ab{a}< \ab{x}<b\\
	2-2^{-k}\eps^{2\delta}\leq \frac{a+b}{2}\leq 2-2^{-(k+1)}\eps^{2\delta}\\
	2-2^{-j}\eps\leq \frac{b-a}{2}\leq 2-2^{-(j+1)}\eps
	}} \frac{\d a\d b}{\sqrt{\ab{x}^2-a^2}\sqrt{b^2-\ab{x}^2}}.
\end{align}
Given $(a,b)$ in the domain of integration from \eqref{eq:bipolarAjDk}, it holds that $0\leq 4-b\leq \eps^{2\delta}$ and $\ab{a}\leq \eps^{2\delta}$, so that under the working assumption $\ab{x}\geq \eps^\delta$, we have $\ab{x}-\ab{a}\gtrsim_{\delta} \ab{x}$ and $\ab{x}+\ab{a}\simeq\ab{x}$. If, in addition, $\ab{x}\leq 4-\eps^\delta$, then $b-\ab{x}\gtrsim_{\delta} \eps^\delta$, and therefore \eqref{eq:bipolarAjDk} yields
\[ \ab{A_j\cap D_{k}}\lesssim_\delta \ab{x}^{-1}\eps^{-\delta}(2^{-j}\eps)(2^{-k}\eps^{2\delta})=\ab{x}^{-1}2^{-(j+k)}\eps^{1+\delta}, \]
and \eqref{eq:contribAdoubleprime} can be bounded as follows:
\begin{equation}\label{eq:contribAdpCase1}
\ab{x}^\gamma\int_{A''}\frac{1}{\sqrt{2-\ab{y}}}\frac{1}{\sqrt{2-|x-y|}}\d y
	\lesssim_\delta\ab{x}^{-(1-\gamma)}\sum_{j,k\geq 0}2^{-(j+k)/2}\eps^{1/2}\lesssim \eps^{\frac{1}{2}-\delta(1-\gamma)}.
\end{equation}
In the complementary case when $4-\eps^\delta\leq \ab{x}\leq 4$, we have $\ab{x}-\ab{a}\simeq 1$ and $b+\ab{x}\simeq 1$ in the domain of integration from \eqref{eq:bipolarAjDk}, so that
\begin{align*}
	\ab{A_j\cap D_{k}}&\simeq\int_{\substack{
	\ab{a}< \ab{x}<b\\
	2-2^{-k}\eps^{2\delta}\leq \frac{a+b}{2}\leq 2-2^{-(k+1)}\eps^{2\delta}\\
	2-2^{-j}\eps\leq \frac{b-a}{2}\leq 2-2^{-(j+1)}\eps
	}} \frac{\d a\d b}{\sqrt{b-\ab{x}}}\\
	&=\int_{\substack{
	b>\ab{x}\\
	b\geq 4-2^{-j}\eps-2^{-k}\eps^{2\delta}\\
	b\leq 4-2^{-(j+1)}\eps-2^{-(k+1)}\eps^{2\delta}
	}}\int_{\substack{
	\ab{a}< \ab{x}\\
	2-2^{-k}\eps^{2\delta}\leq \frac{a+b}{2}\leq 2-2^{-(k+1)}\eps^{2\delta}\\
	2-2^{-j}\eps\leq \frac{b-a}{2}\leq 2-2^{-(j+1)}\eps
	}}\d a \frac{1}{\sqrt{b-\ab{x}}} \d b\\
&\lesssim\min\{2^{-j}\eps,2^{-k}\eps^{2\delta}\}\int_{\max\{4-2^{-j}\eps-2^{-k}\eps^{2\delta},\ab{x} \}}^{4-2^{-(j+1)}\eps-2^{-(k+1)}\eps^{2\delta}}\frac{\d b}{\sqrt{b-\ab{x}}}\\
	&\lesssim \min\{2^{-j}\eps,2^{-k}\eps^{2\delta}\}\max\{(2^{-j}\eps)^{1/2},2^{-k/2}\eps^{\delta}\}\\
	&=2^{-(j+k)/2}\eps^{1/2+\delta}\min\{2^{-j/2}\eps^{1/2},2^{-k/2}\eps^{\delta}\}.
\end{align*}
In this way, \eqref{eq:contribAdoubleprime} is bounded as follows:
\begin{align}
\ab{x}^\gamma\int_{A''}\frac{1}{\sqrt{2-\ab{y}}}\frac{1}{\sqrt{2-|x-y|}}\d y
	&\lesssim\ab{x}^{\gamma}\sum_{j,k\geq 0}\min\{2^{-j/2}\eps^{1/2},2^{-k/2}\eps^{\delta}\}\nonumber\\
&\lesssim \sum_{j\geq 0}(2^{-j}\eps)^{1/2}+\sum_{j\geq 0}(2^{-j}\eps)^{1/2}\ab{\log(2^{-j}\eps^{1-2\delta})}\nonumber\\
&\lesssim_s\sum_{j\geq 0}(2^{-j}\eps)^{1/2}+\sum_{j\geq 0}\eps^{\delta}(2^{-j}\eps^{1-2\delta})^{1/2-s}\nonumber\\
\label{eq:contribAdpCase2}
&\lesssim_s\eps^{\frac{1}{2}-s},
\end{align}
for any $s\in (0,\frac{1}{2})$. In the passage from the first to the second line above, we split the sum in $k$ according to the partition $\N_0=\{k\in\N_0:k\geq \ab{\log_2(2^{-j}\eps^{1-2\delta})} \}\cup \{k\in\N_0:k< \ab{\log_2(2^{-j}\eps^{1-2\delta})}\}=:E_1\cup E_2$, respectively, where $\log_2(\cdot)$ denotes the base-2 logarithm. On $E_1$, the sum is a convergent geometric series whose value is proportional to the first term, hence it is $\lesssim(2^{-j}\eps)^{1/2}$. On $E_2$, the summands are constant, and the contribution to the sum equals $(2^{-j}\eps)^{1/2}\ab{E_2}\simeq(2^{-j}\eps)^{1/2}\ab{\log(2^{-j}\eps^{1-2\delta})}$.
 As a result of \eqref{eq:yNear2}, \eqref{eq:phi2SmallX}, \eqref{eq:contribAdpCase1} and \eqref{eq:contribAdpCase2}, we conclude the upper bound $\phi_2(x,\eps)\lesssim_{\delta,s} \max\{\eps^{\frac{1}{2}-\delta},\eps^{(\gamma-s)\delta}\}$, for every $\delta\in(0,\frac{1}{2})$ and $s\in(0,\gamma)$.
Optimizing in $\delta$, we are thus led to the estimate $\phi_2(x,\eps)\lesssim_s\eps^{\frac{\gamma}{2(\gamma+1)}-s}$, for every $s>0$.
	This concludes the verification of \eqref{eq:ineqInU}.\\
	
	To handle \eqref{eq:ineqInV}, start by noting that
	\[ \ab{x}^\gamma\int_{B_2\cap B(x,\eps)} F(y)F(x-y) \d y=16\ab{x}^\gamma\int_{B_2\cap B(x,\eps)} \frac{1}{\ab{y}\sqrt{4-\ab{y}^2}}\frac{1}{\ab{x-y}\sqrt{4-\ab{x-y}^2}} \d y. \]
	Since $\eps<1$, we may remove the term $\sqrt{4-\ab{x-y}^2}$ from the latter integrand at the expense of a universal constant. After an application of  \eqref{eq:decomposeSigma22},
	we are then left to study the following integrals:
	\begin{equation}\label{eq:phi3andphi4}
	\phi_3(x,\eps):=\ab{x}^\gamma\int_{B_2\cap B(x,\eps)}\frac{\d y}{\ab{y}\ab{x-y}},\quad
	\phi_4(x,\eps):=\ab{x}^\gamma\int_{B_2\cap B(x,\eps)}\frac{\d y}{\sqrt{2-\ab{y}}\ab{x-y}}.
	\end{equation}
	{\bf Analysis of $\phi_3(x,\eps)$.}
	Decompose the region of integration $B_2\cap B(x,\eps)=A_1\cup A_2$, where
	 \begin{align*}
	 A_1&:= B(x,\eps)\cap\{y\in B_2: \ab{y}\geq\eps^{1/2}\},\\
	 A_2&:= B(x,\eps)\cap\{y\in B_2: \ab{y}<\eps^{1/2}\}.
	 \end{align*}
	On the region $A_1$, we may simply estimate
	\begin{align*}
	\ab{x}^\gamma\int_{A_1}\frac{\d y}{|y| |x-y|}&\leq \ab{x}^\gamma\eps^{-1/2}\int_{B(x,\eps)}\frac{\d y}{\ab{x-y}}=2\pi \ab{x}^\gamma\eps^{-1/2}\eps\lesssim \eps^{1/2}.
	\end{align*}
	We further split $A_2=A_2'\cup A_2''$, with 
	\[A_2':=A_2\cap\{y: \ab{y}\geq\ab{x-y}\},\text{ and }A_2'':=A_2\cap\{y: \ab{y}<\ab{x-y}\}.\] 
	If $y\in A_2'$, then $\ab{y}\geq\frac{1}{2}\ab{x}$, and therefore
	\begin{align*}
	\ab{x}^\gamma\int_{A_2'} \frac{\d y}{|y| |x-y|}\lesssim \int_{A_2'} \frac{\d y}{\ab{y}^{1-\gamma}|x-y|}.
	\end{align*}
	Now, $\ab{y}^{-(1-\gamma)}\mathbbm{1}_{B_2}\in L^{p}(\R^2)$ for every $1\leq p< \frac{2}{1-\gamma}$, and $\ab{y-x}^{-1}\mathbbm{1}_{B_2}\in L^q(\R^2)$ for every $1\leq q<2$. Taking $2<p<\frac{2}{1-\gamma}$, its conjugate  satisfies $\frac{2}{1+\gamma}<p'<2$, and so by H\"older's inequality we have that
	\begin{align*}
	\int_{A_2'} \frac{\d y}{\ab{y}^{1-\gamma}|x-y|}&\leq \Bigl(\int_{B_{{\eps}^{1/2}}}\frac{\d y}{\ab{y}^{p(1-\gamma)}}\Bigr)^{\frac1p}\Bigl(\int_{B(x,\eps)}\frac{\d y}{\ab{x-y}^{p'}}\Bigr)^{\frac1{p'}}\\
	&=\tfrac{2\pi}{(2-p(1-\gamma))^{\frac1p}(2-p')^{\frac1{p'}}}\eps^{\frac{p(1+\gamma)-2}{2p}}.
	\end{align*}
	Note that $(p(1+\gamma)-2)/(2p)$ strictly increases to $\gamma$ as $p\to 2/(1-\gamma)^-$. In this way, we obtain
	\[ \ab{x}^\gamma\int_{A_2'} \frac{\d y}{\ab{y}|x-y|}\lesssim_s \eps^{\gamma-s}, \]
	for every $s\in(0,\gamma)$.
	If $y\in A_2''$, then $\ab{x-y}\geq \frac{1}{2}\ab{x}$ and $\ab{y}<\ab{x-y}\leq\eps$; in particular, $A_2''\subset B_\eps$. 
	Therefore, if $2<p<\frac{2}{1-\gamma}$, then
	\begin{align*}
	\ab{x}^\gamma\int_{A_2''} \frac{\d y}{|y| |x-y|}&\lesssim \int_{A_2''} \frac{\d y}{|y|\ab{x-y}^{1-\gamma}}\leq \Bigl(\int_{B_\eps}\frac{\d y}{\ab{y}^{p'}}\Bigr)^{\frac1{p'}}\Bigl(\int_{B(x,\eps)}\frac{\d y}{\ab{x-y}^{p(1-\gamma)}}\Bigr)^{\frac1p}\\
	&\lesssim \eps^{\frac{2-p'}{p'}}\eps^{\frac{2-p(1-\gamma)}{p}}=\eps^{\gamma}.
	\end{align*}
	We conclude that $\phi_3(x,\eps)\lesssim_s \eps^{\min\{\frac{1}{2},\gamma-s\}}$, for every $s\in(0,\gamma)$.\\
	
	\noindent{\bf Analysis of $\phi_4(x,\eps)$.}
	Proceeding as before, we decompose the region of integration $B_2\cap B(x,\eps)=D_1\cup D_2$, where
	\begin{align*}
	D_1&:= B(x,\eps)\cap \{y\in B_2\colon \sqrt{2-\ab{y}}\geq \eps^{1/2} \},\\
	D_2&:= B(x,\eps)\cap \{y\in B_2\colon \sqrt{2-\ab{y}}<\eps^{1/2}\}.
	\end{align*}
	On the region $D_1$, we may simply estimate
	\begin{align*}
	\ab{x}^\gamma\int_{D_1}\frac{\d y}{\sqrt{2-\ab{y}}\ab{x-y}}\leq \ab{x}^\gamma \eps^{-1/2}\int_{B(x,\eps)}\frac{\d y}{\ab{x-y}}\lesssim \eps^{1/2}.
	\end{align*}
	If $y\in D_2$, then $2-\eps< \ab{y}\leq 2$, and so $2-2\eps\leq \ab{x}\leq 2+\eps$.
	We may apply a dyadic decomposition,
	\[ V_j=\{y\in D_2\colon 2^{-(j+1)}\eps\leq\ab{x-y}\leq 2^{-j}\eps \},\; j\in \N_0, \]
	so that, letting $P(V_j)$ denote the image of $V_j$ under the polar coordinate map, and further writing $P(V_j)=\{(r,\te)\colon \te\in\Theta, r\in R(\te)\}$  for some $\Theta\subseteq[0,2\pi)$ and $R(\te)\subseteq [0,\infty)$, 
	we have that
	\begin{align*}
	\ab{x}^\gamma\int_{D_2}&\frac{\d y}{\sqrt{2-\ab{y}}\ab{x-y}}\lesssim\ab{x}^\gamma\sum_{j=0}^\infty 2^j\eps^{-1}\int_{V_j}\frac{\d y}{\sqrt{4-\ab{y}^2}}=\ab{x}^\gamma\sum_{j=0}^\infty 2^{j}\eps^{-1}\int_{P(V_j)}\frac{r\d r\d\te}{\sqrt{4-r^2}}\\
	&\lesssim\ab{x}^\gamma\sum_{j=0}^\infty 2^{j}\eps^{-1}\int_{\Theta}\ab{R(\theta)}^{1/2}\d\te
	\lesssim
	\ab{x}^\gamma\sum_{j=0}^\infty (2^{-j}\eps)^{-1} (2^{-j}\eps)^{1/2} (2^{-j}\eps)
	\lesssim \eps^{1/2}.
	\end{align*}
	In the second-to-last inequality, we used the fact that the length of the intersection of any line with the annulus  $V_j$ is $O(2^{-j}\eps)$ (so that $\ab{R(\te)}^{1/2}\lesssim(2^{-j}\eps)^{1/2}$), whereas the angular span $\Theta$ has measure $O(2^{-j}\eps)$ given that $\ab{x}\gtrsim 1$ and $V_j\subseteq B(x,2^{-j}\eps)$. We conclude that $\phi_4(x,\eps)\lesssim \eps^{1/2}$, and therefore \eqref{eq:ineqInV} is verified.
	This finishes the proof of the lemma.
\end{proof}

\begin{proposition}\label{prop:lip4circle}
	Given $\gamma>0$ and $\{h_j\}_{j=1}^4\subset \textup{Lip}(\sph{1})$,
        let $H_\gamma=\ab{\cdot}^{\gamma}(h_1\sigma\ast h_2\sigma\ast h_3\sigma\ast h_4\sigma)$.
	Then there exist $\tau>0$ and $C<\infty$ such that, for every $x,x'\in\R^2$,
	\begin{equation}\label{eq:weakHolder4S1}
	|H_\gamma(x)-H_\gamma(x')|\leq C|x-x'|^{\tau},
	\end{equation}
	where $C\leq C_0\prod_{j=1}^4\|h_j\|_{\textup{Lip}(\sph{1})}$, for some constant $C_0<\infty$ depending only on $\gamma$.
\end{proposition}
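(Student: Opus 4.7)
The plan is to exploit the two-fold factorization $H_\gamma(x)=|x|^\gamma G(x)$, where
\[
G(x):=\bigl((u_{12}F)\ast(u_{34}F)\bigr)(x),
\]
with $F=\sigma\ast\sigma$ and $u_{12},u_{34}$ as in \eqref{eq:u12def}--\eqref{eq:u34def}, both of which satisfy the conclusion of Lemma \ref{lem:weakHolderd}. From $|u_{ij}|\leq\prod_k\|h_k\|_{L^\infty}$ and \eqref{eq:4foldsigma1} we get the pointwise control $G(x)\lesssim 1+|\log|x||$. We may assume $|x-x'|\ll 1$ and distinguish two cases based on the size of $\min\{|x|,|x'|\}$ relative to $|x-x'|^{1/2}$.

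If $\min\{|x|,|x'|\}\leq|x-x'|^{1/2}$, the triangle inequality forces $\max\{|x|,|x'|\}\lesssim|x-x'|^{1/2}$, so that $|H_\gamma(x)|,|H_\gamma(x')|\lesssim|x-x'|^{\gamma/2}(1+|\log|x-x'||)\lesssim|x-x'|^{\gamma/2-\eps}$ for any small $\eps>0$, and \eqref{eq:weakHolder4S1} follows by the triangle inequality. Otherwise, $\min\{|x|,|x'|\}\geq|x-x'|^{1/2}$, so $|x|\simeq|x'|$, and we decompose
\[
H_\gamma(x)-H_\gamma(x')=\bigl(|x|^\gamma-|x'|^\gamma\bigr)G(x)+|x'|^\gamma\bigl(G(x)-G(x')\bigr),
\]
controlling the first summand directly by $C_\gamma|x-x'|^{\min\{\gamma,1\}}(1+|\log|x-x'||)$ via the elementary estimate $\bigl||x|^\gamma-|x'|^\gamma\bigr|\lesssim_\gamma|x-x'|^{\min\{\gamma,1\}}$ on $B_4$ and the logarithmic bound on $G$.

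The second summand is the heart of the matter. Setting $K:=u_{34}F$, we write
\[
G(x)-G(x')=\int u_{12}(y)F(y)\bigl[K(x-y)-K(x'-y)\bigr]\,\di y,
\]
and partition $\R^2=\mathcal{B}\cup\mathcal{G}$, where $\mathcal{B}$ collects those $y$ within distance $\eps$ of any of the singular loci of $F(y)$, $F(x-y)$, or $F(x'-y)$, i.e., where any of $|y|$, $|x-y|$, $|x'-y|$ is either $\leq\eps$ or $\geq 2-\eps$. On $\mathcal{B}$, discard the H\"older regularity and bound the integrand by $C(F(y)F(x-y)+F(y)F(x'-y))$; since $|x|\simeq|x'|$, Lemma \ref{lem:smallIntegralFF} (together with its $y\leftrightarrow x-y$ variant, plus the observation that the singularities involving $x'$ sit within $|x-x'|\ll\eps$ of those involving $x$) controls each of the resulting small-region integrals by $C\eps^{c}$, for some $c=c(\gamma)>0$. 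On $\mathcal{G}$, all of $|y|,|x-y|,|x'-y|$ lie in $[\eps,2-\eps]$, so $F$ is smooth there with $|F(z)|,|\nabla F(z)|\lesssim\eps^{-O(1)}$; combining this with Lemma \ref{lem:weakHolderd} yields $|K(x-y)-K(x'-y)|\lesssim\eps^{-N}|x-x'|^{1/2}$ uniformly on $\mathcal{G}$, and the contribution of $\mathcal{G}$ to the integral is thus $\lesssim\eps^{-N}|x-x'|^{1/2}$ after integrating against $L(y):=u_{12}(y)F(y)\in L^1(\R^2)$.

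Combining these bounds yields $|x'|^\gamma|G(x)-G(x')|\lesssim\eps^{c}+\eps^{-N}|x-x'|^{1/2}$; choosing $\eps=|x-x'|^{\alpha}$ with $\alpha=1/(2(c+N))$ balances the two contributions and delivers \eqref{eq:weakHolder4S1} with some $\tau>0$. The principal technical obstacle is the bookkeeping on the bad set $\mathcal{B}$: one must handle six small-region configurations (two singular types, at $0$ and at the sphere $\{|\cdot|=2\}$, for each of the three relevant $F$-factors) and match each to the correct case of Lemma \ref{lem:smallIntegralFF}, exploiting the $y\leftrightarrow x-y$ symmetry of the convolution integral and the comparison $|x|\simeq|x'|$ in Case 2 to transfer the weight $|x|^\gamma$ between $x$ and $x'$.
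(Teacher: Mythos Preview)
Your argument is correct and uses the same two key ingredients as the paper (Lemma~\ref{lem:weakHolderd} for the regularity of $u_{34}$, and Lemma~\ref{lem:smallIntegralFF} for the small-region contributions), but the organization is genuinely different, so a brief comparison is worthwhile.

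The paper does not split off the case $\min\{|x|,|x'|\}\leq|x-x'|^{1/2}$. Instead, after assuming $|x|\leq|x'|$, it keeps the weight $|x|^\gamma$ attached to the main term throughout and bounds the cross term $\bigl||x|^\gamma-|x'|^\gamma\bigr|\,|G(x')|$ via $|x-x'|^s\cdot|x'|^{\gamma-s}\sigma^{\ast4}(x')\in L^\infty$; this absorbs the logarithmic singularity without an explicit case analysis. For the main term, rather than a single good/bad decomposition with a free parameter $\eps$, the paper splits the integrand itself into two pieces (difference in $u_{34}$ versus difference in $F$) and analyses each directly: the first via Lemma~\ref{lem:weakHolderd} plus the region $R$ of \eqref{eq:defR}, the second by writing out $F(x-y)-F(x'-y)$ explicitly and isolating the annular pieces by hand. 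An auxiliary scale does appear (the $|x-x'|^{1/4}$ threshold and the exponent $\delta$ in the analysis of~\eqref{eq:differenceSqrtF}), but only locally.

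What your approach buys is modularity: once Lemma~\ref{lem:smallIntegralFF} is in hand, the six bad regions are dispatched uniformly, and on $\mathcal G$ the crude bound $|\nabla F|\lesssim\eps^{-O(1)}$ replaces the detailed algebra the paper performs on $F(x-y)-F(x'-y)$. The price is a worse (and unspecified) value of $\tau$, and the need to verify the $\eps$-balancing and the six-fold bookkeeping you outline; both are routine. The paper's hands-on route yields the explicit range $\tau<\min\{\tfrac{1}{14},\tfrac{\gamma}{2(3\gamma+2)}\}$, but for the application (Lemma~\ref{cor:EEC}) any $\tau>0$ suffices, so your argument is equally adequate.
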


\noindent The proof of Proposition \ref{prop:lip4circle} will reveal that one can take any $\tau<\min\{\frac{1}{14},\frac{\gamma}{2(3\gamma+2)}\}$. 
To a large extent, the proof follows similar lines to those of Proposition  \ref{prop:lip3S2}, and so at times we shall be brief. The main difference is that now the extra singularity of $(\sigma\ast\sigma)(x)$ along the boundary circle $\ab{x}=2$ also needs to be accounted for.

\begin{proof}[Proof of Proposition \ref{prop:lip4circle}]
	Since the case $\gamma>1$ follows from that of $\gamma\in(0,1]$,  the latter condition will be assumed throughout the proof. 
	By homogeneity, we may assume $\|h_j\|_{\textup{Lip}}=1$, $1\leq j\leq 4$.
	Since $H_\gamma$ is compactly supported, it is enough to consider $x,x'\in\R^2$ satisfying $|x-x'|\ll 1$; we  further assume  $\ab{x}\leq \min\{4,\ab{x'}\}$. 
	With the notation introduced above (recall \eqref{eq:u12def}--\eqref{eq:defF}), we have that
	\begin{multline}
	\ab{H_\gamma(x)-H_\gamma(x')}=\abs{\ab{x}^\gamma(u_{12}F\ast u_{34}F)(x)-\ab{x'}^\gamma(u_{12}F\ast u_{34}F)(x')}\\
	\label{eq:secondTermDifH}
	\leq \ab{x}^\gamma\abs{(u_{12}F\ast u_{34}F)(x)-(u_{12}F\ast u_{34}F)(x')}+\ab{\ab{x}^\gamma-\ab{x'}^\gamma}\abs{(u_{12}F\ast u_{34}F)(x')}.
	\end{multline}
	The second summand in \eqref{eq:secondTermDifH} satisfies the upper bound
	\begin{align*}
	\ab{\ab{x}^\gamma-\ab{x'}^\gamma}\abs{(u_{12}F\ast u_{34}F)(x')}&\leq\ab{\ab{x}^\gamma-\ab{x'}^\gamma} \sigma^{\ast 4}(x')\lesssim_\gamma \ab{\ab{x}-\ab{x'}}^\gamma\sigma^{\ast 4}(x')\\
	&\leq \ab{x-x'}^{s}\ab{x'}^{\gamma-s}\sigma^{\ast 4}(x')\\
	&\lesssim_{\gamma,s} \ab{x-x'}^{s},
	\end{align*}
	for any $s\in (0,\gamma)$, where in the third inequality we used $\ab{x}\leq \ab{x'}$ to obtain $\ab{\ab{x}-\ab{x'}}\leq \ab{x'}$, and in the last inequality we invoked \eqref{eq:LinftySigma4}. The first summand in \eqref{eq:secondTermDifH} can be rewritten as the sum of two integrals,
	\begin{multline*}
	\ab{x}^\gamma\Bigl((u_{12}F\ast u_{34}F)(x)-(u_{12}F\ast u_{34}F)(x')\Bigr)\\
	=\ab{x}^\gamma\int_{B_2} u_{12}(y)F(y) F(x'-y)\left(u_{34}(x-y)-u_{34}(x'-y)\right)\,\d y\\
	+\ab{x}^\gamma\int_{B_2}u_{12}(y)F(y) \left(F(x-y)-F(x'-y)\right)u_{34}(x-y)\,\d y.
	\end{multline*}
	We denote the integrals on the right-hand side of the latter identity by $I$ and $II$, respectively, and proceed to estimate them separately.\\
	
	\noindent {\bf Estimating $I$.} The fist step is to restrict the domain of integration to the region where $x-y,x'-y\in B_2$, plus a
	$O(\ab{x-x'}^\alpha)$ remainder, for some $\alpha>0$ to be determined. 
	With this purpose in mind, decompose $B_2=U\cup U'\cup V\cup W$, where
	\begin{gather}
	\label{eq:decomposeB2}
	U:=\{y\in B_2: |x'-y|<2\leq |x-y|\},\, U':=\{y\in B_2: |x-y|<2\leq |x'-y|\},\\
	\nonumber
	V:=\{y\in B_2: |x-y|,|x'-y|<2 \}, \, W:=\{y\in B_2: 2\leq  |x'-y|,|x-y| \}.
	\end{gather}
	The integrand of $I$ vanishes on $U'\cup W$, and so  we are left to analyze the integrals over the regions $U$ and $V$. As in \eqref{eq:UcontainedAnnulus}, we have that
	\begin{equation}\label{eq:defAxeps}
	 U\subseteq\{y\in B_2\colon 2-\ab{x-x'}\leq \ab{x'-y}\leq 2\}:=A(x',\ab{x-x'}), 
	 \end{equation}
	and therefore
	\begin{align*}
	\ab{x}^\gamma\int_{U} \ab{u_{12}(y)u_{34}(x'-y)}F(y)F(x'-y)\,\d y
	&\leq \ab{x}^\gamma\int_{A(x',\ab{x-x'})} F(y)F(x'-y)\d y\\
	&\lesssim_{\gamma,s}\ab{x-x'}^{\min\{\frac{1}{6},\frac{\gamma}{2(\gamma+1)}-s\}},
	\end{align*}
	for every $s\in(0,\frac{\gamma}{2(\gamma+1)})$, where the latter inequality follows from estimate \eqref{eq:ineqInU}.  
	We now consider the integral over the set $V$. To begin with, note that Lemma \ref{lem:weakHolderd} implies the pointwise estimate
	\begin{equation}\label{eq:ChristShaod2}
	|u_{34}(x-y)-u_{34}(x'-y)|\lesssim |x-x'|^{1/2}+\left|\frac{x-y}{|x-y|}-\frac{x'-y}{|x'-y|}\right|,
	\end{equation}
	provided $x-y,x'-y\in B_2$.
	The contribution of the region 
	\begin{equation}\label{eq:defRd2}
	R:=\biggl\{y\in V: \left|\frac{x-y}{|x-y|}-\frac{x'-y}{|x'-y|}\right|\leq|x-x'|^{1/2}\biggr\}
	\end{equation}
	to the integral $I$ is easy to estimate.
	In view of \eqref{eq:ChristShaod2} and \eqref{eq:defRd2}, since $|x|\leq|x'|$,
	\begin{align*}
	\ab{x}^\gamma\Big|\int_R u_{12}(y)F(y) F(x'-y)&\left(u_{34}(x-y)-u_{34}(x'-y)\right)\,\d y\Big|\\
	&\lesssim \ab{x'}^\gamma\left(\int_R F(y)F(x'-y)\,\d y\right) |x-x'|^{1/2}\\
	&\leq \ab{x'}^\gamma\sigma_2^{\ast 4}(x')  |x-x'|^{1/2}
	\lesssim_\gamma |x-x'|^{1/2},
	\end{align*}
	where in the latter inequality we invoked \eqref{eq:LinftySigma4}. 
	If $y\in V\setminus R$, then $|x'-y|\leq 2\ab{x-x'}^{1/2}$ as in \eqref{eq:smallThanSqrt}.
	The contribution of the region $V\setminus R$ can then be estimated as follows:
	\begin{align*}
	\ab{x}^\gamma\Big|\int_{V\setminus R}u_{12}(y)F(y)&F(x'-y) \left(u_{34}(x-y)-u_{34}(x'-y)\right)\,\d y\Big|\\
	&\lesssim\ab{x}^\gamma\int_{V\setminus R}|u_{12}(y)|F(y)F(x'-y) \left|\frac{x-y}{|x-y|}-\frac{x'-y}{|x'-y|}\right|\,\d y\\
	&\leq 2 \ab{x}^\gamma \int_{V\cap B(x', 2|x-x'|^{1/2})} F(y)F(x'-y) \,\d y\\
	&\lesssim_{\gamma,s} |x-x'|^{\min\{\frac{1}{4},\frac{\gamma}{2}-s\}},
	\end{align*}
	for every $s\in(0,\frac{\gamma}2)$.	The latter inequality is a consequence of estimate \eqref{eq:ineqInV}.\\
	
	\noindent {\bf Estimating $II$.}
	The integral $II$ is bounded in absolute value by
	$$\ab{x}^\gamma\int_{B_2}F(y) \left|F(x-y)-F(x'-y)\right|\,\d y.$$
	Decompose $B_2=U\cup U'\cup V\cup W$ as in \eqref{eq:decomposeB2}, and note that the integrand of $II$ vanishes on $W$. The contribution of the region $U\cup U'$ can be handled with estimate \eqref{eq:ineqInU} as follows (recall \eqref{eq:defAxeps}):
	\begin{align*}
	\ab{x}^\gamma\int_{U\cup U'} F(y) \vert F(x-y)-F(x'-y)\vert \d y
	&\leq 
	2\ab{x}^\gamma\int_{A(x,\ab{x-x'})} F(y)F(x-y)\d y\\
	&\lesssim_\gamma \ab{x-x'}^{\min\{\frac{1}{6},\frac{\gamma}{2(\gamma+1)}-s\}},
	\end{align*}
	for every $s\in(0,\frac{\gamma}{2(\gamma+1)})$.	The estimate on the region $V$ is more delicate, and we split the analysis into two cases. Inside the ball $|x-y|\leq |x-x'|^{1/4}$, we also have that $\ab{x'-y}\leq \ab{x-x'}+\ab{x-y}\lesssim\ab{x-x'}^{1/4}$. In order to bound the corresponding piece of $II$, it suffices to consider the integral
	\[\varphi(x,x'):=\ab{x'}^\gamma\int_{V\cap B(x',|x-x'|^{1/4})} F(y)F(x'-y)\d y,\]
	which by \eqref{eq:ineqInV} 
	satisfies $\varphi(x,x')\lesssim_s\ab{x-x'}^{\frac{1}{4}\min\{\frac12,{\gamma}-s\}}$, for every $s\in (0,\gamma)$. We proceed with the analysis of the complementary region, i.e.\@ where $|x-y|>|x-x'|^{1/4}$. If $y\in B_2$, then
	\[ F(y)=\frac{4}{\ab{y}\sqrt{4-\ab{y}^2}}=\frac{\sqrt{4-\ab{y}^2}}{\ab{y}}+\frac{\ab{y}}{\sqrt{4-\ab{y}^2}}, \]
	and, as a consequence,
	\begin{align*}
	&\ab{F(x-y)-F(x'-y)} \\
	&\leq
	\Abs{\frac{\sqrt{4-\ab{x-y}^2}}{\ab{x-y}}-\frac{\sqrt{4-\ab{x'-y}^2}}{\ab{x'-y}}}
	+\Abs{\frac{\ab{x-y}}{\sqrt{4-\ab{x-y}^2}}-\frac{\ab{x'-y}}{\sqrt{4-\ab{x'-y}^2}}}\\
	&\leq \sqrt{4-\ab{x-y}^2}\Abs{\frac{1}{\ab{x-y}}-\frac{1}{\ab{x'-y}}}+\frac{1}{\ab{x'-y}}\ab{\sqrt{4-\ab{x-y}^2}-\sqrt{4-\ab{x'-y}^2}}\\
	&\quad+\ab{x-y}\Abs{\frac{1}{\sqrt{4-\ab{x-y}^2}}-\frac{1}{\sqrt{4-\ab{x'-y}^2}}}+\frac{1}{\sqrt{4-\ab{x'-y}^2}}\abs{\ab{x-y}-\ab{x'-y}}.
	\end{align*}
	Using the triangle inequality and recalling that $F(x'-y)=\frac{4}{\ab{x'-y}\sqrt{4-\ab{x'-y}^2}}$,
	\begin{align*}
	\ab{F(x-y)-F(x'-y)}&\lesssim \frac{\ab{x-x'}}{\ab{x-y}\ab{x'-y}}+\Abs{\frac{1}{\sqrt{4-\ab{x-y}^2}}-\frac{1}{\sqrt{4-\ab{x'-y}^2}}}\\
	&\quad+\frac{\ab{x-x'}^{1/2}}{\ab{x'-y}}+\frac{\ab{x-x'}}{\sqrt{4-\ab{x'-y}^2}}\\
	&\lesssim \frac{\ab{x-x'}}{\ab{x-y}\ab{x'-y}}+\Abs{\frac{1}{\sqrt{4-\ab{x-y}^2}}-\frac{1}{\sqrt{4-\ab{x'-y}^2}}}\\
	&\quad+\ab{x-x'}^{1/2}F(x'-y).
	\end{align*}
	If $|x-y|> |x-x'|^{1/4}$, then $\ab{x'-y}\geq \ab{x-y}-\ab{x-x'}\gtrsim \ab{x-x'}^{1/4}$. 
	Then for
	$y\in V\cap B(x,\ab{x-x'}^{1/4})^\complement$ we obtain	
	\begin{align*}
	\ab{F(x-y)-F(x'-y)}&\lesssim \ab{x-x'}^{1/2}+\Abs{\frac{1}{\sqrt{4-\ab{x-y}^2}}-\frac{1}{\sqrt{4-\ab{x'-y}^2}}}\\
	&\quad+\ab{x-x'}^{1/2}F(x'-y).
	\end{align*}	
	It follows that the contribution of this region to the integral $II$ is bounded by
	\begin{align*}
	&\ab{x}^\gamma\ab{x-x'}^{1/2}\int_VF(y)\d y+\ab{x}^\gamma\ab{x-x'}^{1/2}\int_{V}F(y)F(x'-y)\d y\\
	&\quad+\ab{x}^\gamma\int_{V\cap B(x,\ab{x-x'}^{1/4})^\complement}F(y)\Abs{\frac{1}{\sqrt{4-\ab{x-y}^2}}-\frac{1}{\sqrt{4-\ab{x'-y}^2}}}\d y\\
	&\lesssim \ab{x-x'}^{1/2}+\ab{x}^\gamma\int_{V\cap B(x,\ab{x-x'}^{1/4})^\complement}F(y)\Abs{\frac{1}{\sqrt{4-\ab{x-y}^2}}-\frac{1}{\sqrt{4-\ab{x'-y}^2}}}\d y,
	\end{align*}
	where we used that $\ab{x}\leq\ab{x'}$, $\ab{x}^\gamma\int_{V}F(y)F(x'-y)\d y\leq \ab{x'}^\gamma\sigma^{\ast 4}(x')\leq C_\gamma<\infty$, and $\int_VF(y)\d y\leq \sigma(\sph{1})^2$. 
	The last integral left to analyze is
	\begin{equation}\label{eq:differenceSqrtF}
	\ab{x}^\gamma\int_{V\cap B(x,\ab{x-x'}^{1/4})^\complement}F(y)\Abs{\frac{1}{\sqrt{4-\ab{x-y}^2}}-\frac{1}{\sqrt{4-\ab{x'-y}^2}}}\d y. 
	\end{equation}
	Given 
	$\delta\in(0,\frac{1}{2})$, 
	we further decompose the domain of integration, $V\cap B(x,\ab{x-x'}^{1/4})^\complement$, into the subregion where $4-\ab{x-y}^2\geq \ab{x-x'}^{\delta}$ and its complement.
	If $y\in V$ satisfies $4-\ab{x-y}^2\geq \ab{x-x'}^{\delta}$, then $4-\ab{x'-y}^2\gtrsim \ab{x-x'}^{\delta}$, and so
	\begin{align*}
	\Abs{\frac{1}{\sqrt{4-\ab{x-y}^2}}-\frac{1}{\sqrt{4-\ab{x'-y}^2}}}\lesssim\frac{\ab{x-x'}^{1/2}}{\sqrt{4-\ab{x-y}^2}\sqrt{4-\ab{x'-y}^2}}\lesssim \ab{x-x'}^{\frac{1}{2}-\delta}.
	\end{align*}
	Therefore, the contribution of this region to the integral \eqref{eq:differenceSqrtF} is bounded by
	\[ \ab{x}^\gamma\ab{x-x'}^{\frac{1}{2}-\delta}\int_{B_2} F(y)\d y\lesssim  \ab{x-x'}^{\frac{1}{2}-\delta}.\]
	Finally, if $4-\ab{x-y}^2< \ab{x-x'}^{\delta}$, then $2-\ab{x-y}\leq \frac{1}{2}\ab{x-x'}^{\delta}$, so that this region is contained in the annular domain
	\[ A(x,\eps):=\{y\in B_2\colon 2-\eps\leq \ab{x-y}\leq 2 \}, \]
	for $\eps=\frac{1}{2}\ab{x-x'}^{\delta}$. Since we also have $2-\ab{x'-y}\leq \ab{x-x'}^{\delta}$ if $\ab{x-x'}\ll 1$,  the region is also contained in $A(x',2\eps)$. The triangle inequality implies that the integral over the latter region is  bounded by (two times) the quantity
	\[ \tilde\vphi(x,x'):=\ab{x}^\gamma\int_{A(x,\ab{x-x'}^{\delta})}F(y)F(x-y)\d y. \]
	One last application of estimate
	\eqref{eq:ineqInU} reveals that $\tilde\vphi(x,x')\lesssim_{\gamma,s}\ab{x-x'}^{\delta\min\{\frac{1}{6},\frac{\gamma}{2(\gamma+1)}-s\}}$, for every $s\in(0,\frac{\gamma}{2(\gamma+1)})$.
	This concludes the proof of the proposition. 
\end{proof}

\begin{remark}\label{rem:higherConvosS1}
	More generally, all higher convolutions $G_n:=h_1\sigma\ast\dotsm\ast h_n\sigma$, $n\geq 5$, are H\"older continuous functions whenever $\{h_j\}_{j=1}^n\subset \textup{Lip}(\sph{1})$. Indeed, this can be verified for the fifth convolution $G_5=h_1\sigma\ast\dotsb\ast h_5\sigma$ by writing $G_5=(\ab{\cdot}^{-\gamma}H_{\gamma})\ast h_5\sigma$, for any $\gamma\in(0,1)$, studying the differences $\ab{G_5(x)-G_5(x')}$, and using  Proposition \ref{prop:lip4circle} together with the methods employed in its proof. Once it is known that $G_5\in\Lambda_\alpha(\R^2)$, for some $\alpha>0$, it is immediate that $G_n\in \Lambda_\alpha(\R^2)$, for every $n\geq 5$. This can be improved, e.g.\@ by noting that $G_{10}\in\Lambda_{2\alpha}(\R^2)$.
\end{remark}

\section{$\mathcal H^s$-bound for a restricted convolution operator}\label{sec:HsEEC}
Consider a function $H:\R^d\to\Co$ supported on the ball $B_R\subset\R^d$, for some $R>0$, satisfying, for some $\alpha\in(0,1)$ and $C<\infty$,
\begin{equation}\label{eq:weakHolder}
\ab{H(x)-H(x')}\leq C\ab{x-x'}^{\alpha}+C\left\vert\frac{x}{\ab{x}}-\frac{x'}{\ab{x'}}\right\vert,\;\text{ for every } x,x'\in B_R\setminus\{0\}.
\end{equation}
Then $H\in L^\infty(\R^d)$ and 
is continuous in $B_R\setminus\{0\}$. 
Given $\gamma\in[0,1]$, let $K_\gamma=\ab{\cdot}^{-\gamma}H$, and define the corresponding linear operator $\mathcal{K}_\gamma:C^0(\sph{d-1})\to L^2(\sph{d-1})$ via
\begin{equation}\label{eq:mathcalKgamma}
 (\mathcal K_\gamma f)(\omega)=\int_{\sph{d-1}}f(\nu)K_\gamma(\omega-\nu)\d\sigma_{d-1}(\nu).
 \end{equation}

\begin{lemma}\label{lem:boundedOpKernel}
	Let $d\geq 3$ and $\gamma\in [0,1]$, or $d=2$ and $\gamma\in[0,1)$.
	Let  $R>0$ 
	and $\mathcal{K}_\gamma$ be the linear operator defined in \eqref{eq:mathcalKgamma} above. Then there exists $\delta=\delta(d,\gamma,R)>0$, such that $\mathcal K_\gamma$ extends to a bounded operator from $L^2(\sph{d-1})$ to $\mathcal H^{\delta}(\sph{d-1})$.
\end{lemma}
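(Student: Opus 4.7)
The plan is to apply Schur's test twice: first to $\mathcal K_\gamma$ to obtain $L^2\to L^2$ boundedness, and then to the difference operator $T_tf(\omega):=(\mathcal K_\gamma f)(e^{tX}\omega)-(\mathcal K_\gamma f)(\omega)$, whose kernel is $K_t(\omega,\nu):=K_\gamma(e^{tX}\omega-\nu)-K_\gamma(\omega-\nu)$, to obtain $\|T_t\|_{L^2\to L^2}\lesssim |t|^\delta$ for each $X=X_{i,j}$, $|t|\leq 1$, and some $\delta>0$. The two steps combined yield $\mathcal K_\gamma f\in\mathcal H^\delta(\sph{d-1})$ with the required norm bound, directly from the definition \eqref{eq:mathcalHsNorm}. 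The $L^2\to L^2$ boundedness follows at once from the pointwise bound $|K_\gamma|\leq\|H\|_\infty|\cdot|^{-\gamma}$ and the fact that $\int_{\sph{d-1}}|\omega-\nu|^{-\gamma}\,\d\sigma_{d-1}(\nu)$ is bounded uniformly in $\omega\in\sph{d-1}$, which holds precisely when $\gamma<d-1$---i.e., in both cases admitted by the hypotheses.

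The core pointwise estimate underlying the second Schur bound stems from the identity $K_\gamma(y)-K_\gamma(y')=(|y|^{-\gamma}-|y'|^{-\gamma})H(y)+|y'|^{-\gamma}(H(y)-H(y'))$, the hypothesis \eqref{eq:weakHolder} on $H$, and the elementary bounds $\big||y|^{-\gamma}-|y'|^{-\gamma}\big|\lesssim|y-y'|\min(|y|,|y'|)^{-\gamma-1}$ together with $\big|y/|y|-y'/|y'|\big|\lesssim|y-y'|/\min(|y|,|y'|)$. Applied with $y=e^{tX}\omega-\nu$ and $y'=\omega-\nu$, so that $|y-y'|\lesssim|t|$, they give, on the region $\min(|y|,|y'|)\geq|t|^\beta$, the control
\[
|K_t(\omega,\nu)|\lesssim|t|^{1-\beta(\gamma+1)}+|\omega-\nu|^{-\gamma}\bigl(|t|^\alpha+|t|^{1-\beta}\bigr).
\]

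To conclude, fix $\beta\in(0,1/(\gamma+1))$ and split $\sph{d-1}=A_\omega\cup B_\omega$ with $A_\omega:=\{\nu:|\omega-\nu|\leq 2|t|^\beta\}$. On $A_\omega$ we discard the cancellation: since $|e^{tX}\omega-\omega|\lesssim|t|\leq|t|^\beta$, both $|\omega-\nu|$ and $|e^{tX}\omega-\nu|$ are $\lesssim|t|^\beta$, and the local estimate $\int_{|z|\leq r}|z|^{-\gamma}\,\d\sigma_{d-1}\lesssim r^{d-1-\gamma}$ yields $\int_{A_\omega}|K_t|\,\d\sigma_{d-1}\lesssim|t|^{\beta(d-1-\gamma)}$. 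On $B_\omega$ the display above applies, and integration against the uniform $L^1$ bound on $|\omega-\nu|^{-\gamma}$ gives $\int_{B_\omega}|K_t|\,\d\sigma_{d-1}\lesssim|t|^{1-\beta(\gamma+1)}+|t|^\alpha+|t|^{1-\beta}$. The sup-over-$\nu$ integrals are estimated identically, splitting in $\omega$ and using the rotation invariance of $\sigma_{d-1}$ in the close region to control $\int|K_\gamma(e^{tX}\omega-\nu)|\,\d\sigma_{d-1}(\omega)$ via a substitution $\omega\mapsto e^{-tX}\omega$. Schur's test then produces $\|T_t\|_{L^2\to L^2}\lesssim|t|^\delta$ with $\delta:=\min\{\beta(d-1-\gamma),\,1-\beta(\gamma+1),\,\alpha,\,1-\beta\}>0$. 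The main obstacle is the quantitative tradeoff between the two regions: $\beta$ must be strictly positive to preserve the close-region exponent (which forces $\gamma<d-1$, precisely the hypothesis of the lemma), yet strictly less than $1/(\gamma+1)$ to preserve the far-region exponents.
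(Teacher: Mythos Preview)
Your argument is correct and takes a genuinely different route from the paper's proof. The paper performs a radial dyadic decomposition $K=\sum_j K\phi_j$, establishes for each piece $\mathcal K_j$ an $L^2\to L^2$ bound of order $2^{-(d-2)j}$ via Schur, an $L^2\to\Lambda_\alpha$ (hence $L^2\to\mathcal H^\alpha$) bound of order $2^{-(d-5)j/2}$ via a pointwise H\"older estimate for $K_j$, and then interpolates in the $\mathcal H^s$-scale to make the sum convergent. Your approach bypasses both the dyadic decomposition and the interpolation: you run Schur directly on the difference kernel $K_t$, splitting $\sph{d-1}$ into a small cap $\{|\omega-\nu|\le 2|t|^\beta\}$ (where the trivial bound and the local integrability $\int_{|z|\le r}|z|^{-\gamma}\d\sigma_{d-1}\lesssim r^{d-1-\gamma}$ suffice) and its complement (where the pointwise control from \eqref{eq:weakHolder} kicks in). The trade-off parameter $\beta\in(0,1/(\gamma+1))$ plays the role that interpolation plays in the paper's proof. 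Your method is more elementary and arguably cleaner; the paper's decomposition, on the other hand, isolates the behaviour of $K$ at each dyadic scale and makes the dependence on $d$ more visible (the $2^{-(d-2)j}$ factor is what drives summability when $d\ge3$, $\gamma=1$).

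One small point you should make explicit: on the far region you invoke \eqref{eq:weakHolder}, which is stated only for $y,y'\in B_R\setminus\{0\}$. If $R\le 2$ it can happen that exactly one of $y=e^{tX}\omega-\nu$, $y'=\omega-\nu$ lies outside $B_R$; in that case $H$ vanishes at that point and the H\"older estimate does not apply as written. This is harmless: since $|y-y'|\lesssim|t|$, the set of such $\nu$ is contained in a thin annulus $\{R-C|t|<|\omega-\nu|\le R\}$ of $\sigma_{d-1}$-measure $O(|t|)$, on which $|K_t|\lesssim R^{-\gamma}\|H\|_\infty$, so the contribution is $O(|t|)$. The paper handles the analogous issue via its regions $U(\omega,\omega')$.
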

\begin{proof}
	Let us start by considering the case $\gamma=1$ in dimensions $d\geq 3$. 
	Henceforth, $K_1, \mathcal K_1$ will be denoted  by $K, \mathcal K$, respectively. 
	Implicit constants may depend on $d, R$, as well as on the constant $C$ from \eqref{eq:weakHolder}. Consider the function $\delta(x)$ as in the proof of Lemma \ref{lem:HolderregularityConvo}. Introduce a radial partition of unity on $B_R$, $\{\phi_j\}_{j\geq 0}$, where $\phi_j=\delta(2^{j}R^{-1}\cdot)$ is supported where $2^{-j-1}R\leq \ab{x}\leq 2^{-j+1}R$, and $\sum_{j\geq 0}\phi_j(x)=1$, for every $x\in B_R\setminus\{0\}$. 
	Let $K_j=K\phi_j$, so that $\norma{K_j}_{L^\infty}\leq 2^{j+1}R^{-1}\norma{H}_{L^\infty}$, and $K_j$ is supported in the spherical shell \[A_j(R):=\{x\in \R^d\colon 2^{-j-1}R\leq\ab{x}\leq 2^{-j+1}R \}.\] 
	For $x,x'\in A_j(R)$, we have that 
	\begin{align}
	\ab{K_j(x)-K_j(x')}&=\ab{\ab{x}^{-1}H(x)\phi_j(x)-\ab{x'}^{-1}H(x')\phi_j(x')}\notag\\
	&\leq \ab{\ab{x}^{-1}-\ab{x'}^{-1}}\ab{H(x)}\phi_j(x)+\ab{x'}^{-1} \ab{H(x)-H(x')}\phi_j(x)\notag\\
	&\quad+\ab{x'}^{-1}\ab{H(x')}\ab{\phi_j(x)-\phi_j(x')}\notag\\
	&\lesssim \left\vert\frac{1}{\ab{x}}-\frac{1}{\ab{x'}}\right\vert+2^{j}\ab{x-x'}^\alpha+
	2^j\left\vert\frac{x}{\ab{x}}-\frac{x'}{\ab{x'}}\right\vert+2^{2j}\ab{x-x'}\notag\\
	&\lesssim 2^{2j}\ab{x-x'}+2^{j}\ab{x-x'}^\alpha+2^{j}\biggl(\frac{1}{\ab{x}}+\frac{1}{\ab{x'}}\biggr)\ab{x-x'}\notag\\
	&\lesssim 2^{2j}\ab{x-x'}^\alpha.\label{eq:HoEstK}
	\end{align}
	If $x,x'\in B_R$, $x\in \operatorname{supp}(K_j)$ but $x'\notin \operatorname{supp}(K_j)$, then $\ab{K_j(x)-K_j(x')}=\ab{K_j(x)}\lesssim 2^j$.
	
	To each $K_j$ there is a corresponding operator $\mathcal K_j$, so that $\mathcal K=\sum_{j\geq 0}\mathcal K_j$.
	The claimed boundedness of $\mathcal K$ is ensured if the  operator norms of the $\mathcal K_j$ are summable in $j$. In turn, the operator $\mathcal K_j$ is bounded on $L^2(\sph{d-1})$, with operator norm $\norma{\mathcal K_j}_{L^2\to L^2}=O(2^{-(d-2)j})$. Indeed, by Schur's test, we have that
	\begin{align*}
	\sup_{\nu\in \sph{d-1}}\int_{\sph{d-1}}\ab{K_j(\omega-\nu)}\d\sigma_{d-1}(\omega)
	&=\sup_{\omega\in \sph{d-1}}\int_{\sph{d-1}}\ab{K_j(\omega-\nu)}\d\sigma_{d-1}(\nu)\\
	&\lesssim 2^j\sup_{\omega\in \sph{d-1}}\int_{\sph{d-1}}\mathbbm{1}_{\{2^{-j-1}R\leq\ab{\omega-\nu}\leq  2^{-j+1}R\}}(\nu)\d\sigma_{d-1}(\nu)\\
	&\lesssim 2^{-(d-2)j}.
	\end{align*}
	Moreover, $\mathcal K_j$ maps $L^2(\sph{d-1})$ to $\Lambda_{\alpha}(\sph{d-1})$. 
	To see why this is the case, given $\omega,\omega'\in \sph{d-1}$, define the sets 
	\begin{align*}
	U(\omega,\omega')&:=\{\nu\in\sph{d-1}\colon \omega-\nu\in \textup{supp}(K_j),\omega'-\nu\notin\textup{supp}(K_j) \},\\
	U(\omega',\omega)&:=\{\nu\in\sph{d-1}\colon \omega'-\nu\in \textup{supp}(K_j),\omega-\nu\notin\textup{supp}(K_j) \},\\
	V&:=\{\nu\in\sph{d-1}\colon \omega-\nu,\omega'-\nu\in\textup{supp}(K_j) \}.
	\end{align*}
	Observe that
	\[ \sigma_{d-1}(V)\leq \int_{\sph{d-1}}\mathbbm{1}_{\{\ab{\omega-\nu}\leq 2^{-j+1}R\}}(\nu)\d\sigma_{d-1}(\nu)\lesssim 2^{-(d-1)j}. \]
	On the other hand, and similarly to \eqref{eq:UcontainedAnnulus}, the following inclusion holds:
	\begin{multline*}
	U(\omega,\omega')\subseteq \{\nu\in\sph{d-1}\colon 2^{-j-1}R-\ab{\omega-\omega'}\leq \ab{\omega'-\nu}\leq 2^{-j-1}R\}\\
	\cup\{\nu\in\sph{d-1}\colon 2^{-j+1}R-\ab{\omega-\omega'}\leq \ab{\omega-\nu}\leq 2^{-j+1}R\}.
	\end{multline*} 
	In particular, $\sigma_{d-1}(U(\omega,\omega'))\lesssim 2^{-(d-2)j}\ab{\omega-\omega'} $. 
	By the same argument, we also have that $\sigma_{d-1}(U(\omega',\omega))\lesssim 2^{-(d-2)j}\ab{\omega-\omega'}$.
	Then we may use \eqref{eq:HoEstK} and estimate 
	\begin{equation}\label{eq:HolderK}
	\begin{split}
	\ab{(\mathcal K_jf)(\omega)-(\mathcal K_jf)(\omega')}
	&\leq 
	\int_{U(\omega,\omega')\cup U(\omega',\omega)\cup V}\ab{K_j(\omega-\nu)-K_j(\omega'-\nu)}\ab{f(\nu)}\d\sigma_{d-1}(\nu)\\
	&\lesssim 2^{2j}\ab{\omega-\omega'}^\alpha\int_{V}\ab{f(\nu)}\d\sigma_{d-1}(\nu)
	+2^j\int_{U(\omega,\omega')}\ab{f(\nu)}\d\sigma_{d-1}(\nu)\\
	&\lesssim 2^{2j}\ab{\omega-\omega'}^\alpha 2^{-\frac{d-1}{2}j}\norma{f}_{L^2}+2^{-\frac{d-4}{2}j}\ab{\omega-\omega'}^{1/2}\norma{f}_{L^2}\\
	&\lesssim 2^{-\frac{d-5}{2}j}\ab{\omega-\omega'}^{\min\{\frac{1}{2},\alpha\}}\norma{f}_{L^2}.
	\end{split}
	\end{equation}
	No generality is lost in assuming that $\alpha\leq\frac12$.  
	Inequality \eqref{eq:HolderK} implies that $\mathcal K_j$ maps $L^2$ to $\mathcal H^\alpha$ boundedly, and moreover
	\begin{equation}\label{eq:Halpha} 
	\norma{\mathcal K_jf}_{\mathcal H^\alpha}\lesssim \norma{\mathcal K_jf}_{L^2}+2^{-\frac{d-5}{2}j}\norma{f}_{L^2}\lesssim 2^{-\frac{d-5}{2}j}\norma{f}_{L^2}. 
	\end{equation}
	From the definition of the $\mathcal H^s$-spaces, one directly checks the following interpolation bounds:
	\[ \norma{f}_{\mathcal H^{\te s+(1-\te)t}}\leq C\norma{f}_{\mathcal H^{s}}^{\te}\norma{f}_{\mathcal H^{t}}^{1-\te},\text{ for all }\te\in[0,1],\;0\leq s,t<1.  \]
	Using this to interpolate \eqref{eq:Halpha} with the $\mathcal H^0$-bound $\norma{\mathcal K_jf}_{L^2}\lesssim 2^{-(d-2)j} \|f\|_{L^2}$ reveals that, if $\delta>0$ is chosen sufficiently small depending on $d\in\{3,4,5\}$ and $\delta=\alpha$ if $d\geq 6$, then $\mathcal K_j$ maps $L^2$ to $\mathcal H^\delta$ boundedly, with operator norm $O(2^{-cj})$ for some $c>0$ which does not depend on $j$. This implies that $\norma{\mathcal K}_{L^2\to\mathcal H^\delta}<\infty$.
	
	We now discuss the case $\gamma\in [0,1)$. 
	If $d=2$, then the argument above works for the kernel $K_\gamma=\ab{\cdot}^{-\gamma}H$, for any $\gamma\in(0,1)$, since  the $L^2\to L^2$ operator norm of the corresponding $\mathcal K_{\gamma,j}$ is then $O(2^{-(1-\gamma)j})$. 
	If $d\geq 3$, then we  write $K_\gamma=\ab{\cdot}^{1-\gamma}K_1$, and see that the H\"older estimate for $K_1$ easily yields a corresponding statement for $K_\gamma$, for every $\gamma\in(0,1)$; in particular, the above argument also transfers. 
	The argument for $\mathcal K_0$ is similar but simpler (details omitted).
	The proof of the lemma is now complete.
\end{proof}

We are finally ready to establish a suitable replacement of Lemma \ref{lem:MLambdaBound} which handles the cases when $(d,m)\in \EEC$.
\begin{lemma}\label{cor:EEC}
	Given $(d,m)\in \EEC$, there exists $\alpha>0$ with the following property. If $\{h_j\}_{j=1}^m\subset \textup{Lip}(\sph{d-1})$  and $g\in L^2(\sph{d-1})$, then $\Mop(h_1,\dots,h_m,g)\in \mathcal H^\alpha$.
	Moreover, the following estimate holds:
	\[ \norma{\Mop(h_1,\dots,h_m,g)}_{\mathcal H^\alpha}\lesssim \prod_{j=1}^m\norma{h_j}_{\textup{Lip}(\sph{d-1})} \norma{g}_{L^2(\sph{d-1})}. \]
\end{lemma}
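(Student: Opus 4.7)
The plan is to express $\Mop(h_1,\dotsc,h_m,g)$ as the restriction to $\sph{d-1}$ of a convolution operator whose kernel fits the framework of Lemma \ref{lem:boundedOpKernel}. Setting $H := h_1\sigma_{d-1}\ast\dotsm\ast h_m\sigma_{d-1}$, one has
\[
\Mop(h_1,\dotsc,h_m,g)(\omega)=\int_{\sph{d-1}}g(\nu)\,H(\omega-\nu)\,\di\sigma_{d-1}(\nu),
\]
so the task reduces to verifying, case by case for $(d,m)\in\EEC$, that $H$ agrees with a kernel of the form $K_\gamma=\ab{\cdot}^{-\gamma}\widetilde H$ as in \eqref{eq:mathcalKgamma}, where $\widetilde H$ is supported in a bounded ball of $\R^d$ and satisfies \eqref{eq:weakHolder} with constant of order $\prod_{j=1}^m\norma{h_j}_{\textup{Lip}(\sph{d-1})}$.

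For $(d,m)=(d,2)$ with $d\geq 3$, I would take $\gamma=1$ and $\widetilde H(x)=\ab{x}H(x)$. Corollary \ref{cor:multByX} provides exactly the needed H\"older-type estimate with exponent $\tfrac12$, and the linear dependence on $\norma{h_1}_{\textup{Lip}}\norma{h_2}_{\textup{Lip}}$ is explicit. Lemma \ref{lem:boundedOpKernel} then yields $\mathcal K_1\colon L^2\to\mathcal H^\delta$ boundedly for some $\delta>0$.

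For $(d,m)=(3,3)$, I would set $\gamma=0$, so $\widetilde H=H$; Proposition \ref{prop:lip3S2} gives $H\in\Lambda_{1/3}(\R^3)$ with norm $\lesssim\prod_{j=1}^3\norma{h_j}_{\textup{Lip}(\sph{2})}$, and H\"older continuity trivially implies \eqref{eq:weakHolder}. For $(d,m)=(2,4)$, I would fix some $\gamma\in(0,1)$ and set $\widetilde H=\ab{\cdot}^\gamma H$; Proposition \ref{prop:lip4circle} guarantees $\widetilde H\in\Lambda_\tau(\R^2)$ for some $\tau>0$, with the desired multilinear control, so once more \eqref{eq:weakHolder} is immediate. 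Lemma \ref{lem:boundedOpKernel} is then applied with $d=3,\gamma=0$ in the first subcase (which is permitted since $\gamma\in[0,1]$ when $d\geq 3$) and $d=2,\gamma\in(0,1)$ in the second (which lies in the admissible range when $d=2$).

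The only bookkeeping point is to confirm that the implicit constant in Lemma \ref{lem:boundedOpKernel} depends linearly on the constant $C$ appearing in \eqref{eq:weakHolder} and on $\norma{\widetilde H}_{L^\infty}$; both quantities are controlled in turn by $\prod_{j=1}^m\norma{h_j}_{\textup{Lip}}$ in all three cases, and linearity in $C$ can be checked by a trivial scaling $\widetilde H\mapsto \lambda\widetilde H$ of the proof of that lemma. Since the heavy lifting lives in Lemma \ref{lem:boundedOpKernel} and the H\"older estimates of \S\ref{sec:Holder}, I anticipate no essential obstacle: the argument is structural, and the main judgment call is simply selecting the right exponent $\gamma$ for each pair $(d,m)\in\EEC$ so that the associated $\widetilde H$ absorbs the singularity of $H$ at the origin in a manner compatible with \eqref{eq:weakHolder}.
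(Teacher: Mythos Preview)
Your proposal is correct and follows essentially the same approach as the paper: in each of the three cases you select the same exponent $\gamma$ (namely $\gamma=1$ for $(d,2)$ with $d\ge 3$, $\gamma=0$ for $(3,3)$, and $\gamma\in(0,1)$ for $(2,4)$), invoke the same H\"older-type results (Corollary~\ref{cor:multByX}, Proposition~\ref{prop:lip3S2}, Proposition~\ref{prop:lip4circle}), and conclude via Lemma~\ref{lem:boundedOpKernel}. Your additional remark on the linear dependence of the constant in Lemma~\ref{lem:boundedOpKernel} on the constant $C$ from~\eqref{eq:weakHolder} is correct and makes explicit a point the paper leaves implicit.
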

\begin{proof}
We consider three distinct cases:\\

	\noindent {\bf Case} $d\geq 3$, $m=2$. 
	From Corollary \ref{cor:multByX}, the function $G=\ab{\cdot}\,(h_1\sigma_{d-1}\ast h_2\sigma_{d-1})$ satisfies
	\begin{equation*}
	\ab{G(x)-G(x')}\lesssim \|h_1\|_{\textup{Lip}}\|h_2\|_{\textup{Lip}}\left(\ab{x-x'}^{1/2}+\left\vert\frac{x}{\ab{x}}-\frac{x'}{\ab{x'}}\right\vert\right).
	\end{equation*}
	The conclusion then follows from Lemma \ref{lem:boundedOpKernel} with $\gamma=1$.\\
	
	\noindent {\bf Case} $(d,m)=(3,3)$. 
	In view of Proposition \ref{prop:lip3S2},
	the function $h_1\sigma_2\ast h_2\sigma_2\ast h_3\sigma_2$ belongs to $\Lambda_{1/3}(\R^3)$. 
	 The conclusion then follows from Lemma \ref{lem:boundedOpKernel} with $\gamma=0$.\\
	
	\noindent {\bf Case} $(d,m)=(2,4)$. 
	In view of Proposition \ref{prop:lip4circle}, given $\gamma>0$, there exists $\tau\in (0,1)$, such that the function $\ab{\cdot}^\gamma (h_1\sigma_1\ast h_2\sigma_1\ast h_3\sigma_1\ast h_4\sigma_1)$ belongs to $\Lambda_\tau(\R^2)$.	
	The conclusion then follows from Lemma \ref{lem:boundedOpKernel} applied to any $\gamma\in(0,1)$.
\end{proof}

\section{Smoothness of critical points}\label{sec:Smoothness}

This section is devoted to the proof of Theorem \ref{thm:smoothnessTheorem}. 
Before starting the proof in earnest, we present two further results which will simplify the forthcoming analysis.

Given $(d,m)\in\mathfrak{U}$ and smooth functions $\{\vphi_j\}_{j=1}^m\subset C^\infty(\sph{d-1})$, 
we define the linear operator $\Lop=\Lop[\vphi_1,\dots,\vphi_m]\colon L^2(\sph{d-1})\to L^2(\sph{d-1})$ via
\[ \Lop[\vphi_1,\dots,\vphi_m](g)=\Mop(\vphi_1,\dots,\vphi_m,g). \]
Lemmata \ref{lem:MLambdaBound} and  \ref{cor:EEC} together imply  the bound $\norma{\Lop (g)}_{\mathcal H^\alpha}\leq C\norma{g}_{L^2}$, for some constant $C$ which depends on $d,m$, and on the functions $\{\vphi_j\}$. For our purposes, the precise dependence of the constant $C$ on $\{\vphi_j\}$ is not important; however, it is essential that $\Lop$ defines a bounded operator from $L^2(\sph{d-1})$ to $\mathcal H^\alpha$, for some exponent $\alpha>0$ which is independent of the functions $\{\vphi_j\}$.
Lemmata \ref{lem:MLambdaBound} and  \ref{cor:EEC} can be recast in terms of the operator $\Lop$, as follows.
\begin{corollary}\label{lem:HboundLop}
	Let $(d,m)\in\frak{U}$. There exists $\alpha>0$, such that $\Lop[\vphi_1,\dots,\vphi_m](g)\in\mathcal H^{\alpha}$, for any $\{\vphi_j\}_{j=1}^m\subset C^\infty(\sph{d-1})$ and $g\in L^2(\sph{d-1})$. Moreover, the following estimate holds:
	\begin{equation}\label{eq:boundLop}
	\norma{\Lop[\vphi_1,\dots,\vphi_m](g)}_{\mathcal H^{\alpha}}\leq C\norma{g}_{L^2(\sph{d-1})},
	\end{equation}
	where $C<\infty$ depends only on $d,m$, and on the functions $\{\vphi_j\}_{j=1}^m$. 
\end{corollary}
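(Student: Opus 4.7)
The plan is to derive this directly from the two preceding lemmata, splitting the argument into the ``interior'' case $(d,m)\in\mathfrak{U}\setminus\EEC$ and the ``boundary'' case $(d,m)\in\EEC$, and choosing $\alpha$ appropriately in each. There is no new analytic content to develop; the statement is essentially a bookkeeping reformulation of Lemmata \ref{lem:MLambdaBound} and \ref{cor:EEC} phrased in terms of the single linear operator $\Lop$.

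In the interior case, I would fix any $\alpha\in(0,\min\{1,\alpha_{d,m}\})$, where $\alpha_{d,m}=\tfrac{1}{2}(d-1)(m-2)-1>0$ because $(d,m)\notin\EEC$, and invoke Lemma \ref{lem:MLambdaBound}. Its conclusion is already formulated in the shape required by \eqref{eq:boundLop}: the right-hand side of \eqref{eq:HnormMsmoothcase} is the product of $\|g\|_{L^2(\sph{d-1})}$ with a constant depending on the $\vphi_j$'s through their $L^2$-norms and through $\|\vphi_1\sigma_{d-1}\ast\dotsm\ast\vphi_m\sigma_{d-1}\|_{\Lambda_{\alpha_{d,m}}(\R^d)}$. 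Since each $\vphi_j\in C^\infty(\sph{d-1})$, Proposition \ref{lem:HolderregularityConvo} ensures the latter convolution belongs to $\Lambda_{\alpha_{d,m}}(\R^d)$, so both factors are finite, and the required bound with a constant $C=C(d,m,\{\vphi_j\})$ follows.

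In the boundary case, I would take $\alpha>0$ to be the exponent produced by Lemma \ref{cor:EEC}, which depends only on $d$ and $m$. Because $C^\infty(\sph{d-1})\subset \textup{Lip}(\sph{d-1})$ and each $\|\vphi_j\|_{\textup{Lip}(\sph{d-1})}$ is finite, Lemma \ref{cor:EEC} yields
\[
\norma{\Lop[\vphi_1,\dots,\vphi_m](g)}_{\mathcal H^{\alpha}}
=\norma{\Mop(\vphi_1,\dots,\vphi_m,g)}_{\mathcal H^{\alpha}}
\lesssim \prod_{j=1}^m \norma{\vphi_j}_{\textup{Lip}(\sph{d-1})}\,\norma{g}_{L^2(\sph{d-1})},
\]
which is again of the form \eqref{eq:boundLop}, with $C$ depending on $d,m$ and on the $\vphi_j$ through their Lipschitz norms.

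Since in either case the exponent $\alpha$ depends only on $d$ and $m$, and the estimate is linear in $g$, the corollary follows at once. The ``hard part'' is not in this proof but in the two lemmata invoked; here the only mildly delicate point is to observe that both of those results have been written in a form compatible with the operator norm bound \eqref{eq:boundLop}, so that assembling them into the statement for $\Lop$ requires no additional work beyond the case split and the observation that smoothness of the $\vphi_j$'s controls the finite quantities appearing on the right-hand sides.
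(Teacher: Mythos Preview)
Your proposal is correct and is precisely the approach the paper takes: the corollary is presented there without a separate proof, as a direct recasting of Lemmata \ref{lem:MLambdaBound} and \ref{cor:EEC} in terms of $\Lop$, via exactly the case split $(d,m)\in\mathfrak{U}\setminus\EEC$ versus $(d,m)\in\EEC$ that you describe.
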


We shall find ourselves  in the need to expand the expressions $(\Theta-I)\Mop(f_1,\dots,f_{m+1})$ and $(\Theta-I)^2\Mop(f_1,\dots,f_{m+1})$, after a suitable decomposition $f_j=\vphi_{j,0}+\vphi_{j,1}$, $1\leq j\leq m+1$, has been performed. A model case for this situation is summarized in the following result.
The list of $\{\vphi_j\}$ with the $i$-th term removed will be denoted by $[\vphi_1,\dots,\mathring\vphi_i,\dots,\vphi_{m+1}]:=[\vphi_1,\dots,\vphi_{i-1},\vphi_{i+1},\dots,\vphi_{m+1}]$.

\begin{lemma}\label{lem:forBootstraping}
	Let $(d,m)\in\mathfrak{U}$, let $\eps\in(0,1)$, and let $\{f_j\}_{j=1}^{m+1}\subset L^2(\sph{d-1})$. 
	For each $j$, decompose $f_j=\vphi_{j,0}+\vphi_{j,1}$, with $\norma{\vphi_{j,0}}_{L^2(\sph{d-1})}<\eps\norma{f_j}_{L^2(\sph{d-1})}$ and $\vphi_{j,1}\in C^{\infty}(\sph{d-1})$. Then, for any $\Theta\in\mathrm{SO}(d)$, the following estimates hold:
	\begin{equation}\label{eq:multilinearBoundSplit}
	\begin{split}
	\norma{(\Theta-I)\Mop(f_1,\dotsc,f_{m+1}&)}_{L^2(\sph{d-1})}\\
	&\lesssim\sum_{i=1}^{m+1}\norma{(\Theta-I)\Lop[\vphi_{1,1},\dotsc,\mathring\vphi_{i,1},\dots,\vphi_{m+1,1}](\vphi_{i,0})}_{L^2(\sph{d-1})}\\
	&\quad+\sum_{i=1}^{m+1}\eps\norma{(\Theta-I)\vphi_{i,0}}_{L^2(\sph{d-1})}\prod_{j=1,j\neq i}^{m+1}\norma{f_j}_{L^2(\sph{d-1})}\\
	&\quad+\sum_{i=1}^{m+1}\norma{(\Theta-I)\vphi_{i,1}}_{L^2(\sph{d-1})}\prod_{j=1,j\neq i }^{m+1}\norma{f_j}_{L^2(\sph{d-1})},
	\end{split}
	\end{equation}
and
\begin{equation}\label{eq:2ndDiffBoundSplit}
\begin{split}
&\norma{(\Theta-I)^2\Mop(f_1,\dotsc,f_{m+1})}_{L^2(\sph{d-1})}\\
&\lesssim\sum_{i=1}^{m+1}\norma{(\Theta-I)\Lop[\vphi_{1,1},\dotsc,\mathring\vphi_{i,1},\dotsc,\vphi_{m+1,1}]((\Theta-I)\vphi_{i,0})}_{L^2(\sph{d-1})}\\
&\quad+\sum_{i=1}^{m+1}\eps\norma{(\Theta-I)^2\vphi_{i,0}}_{L^2(\sph{d-1})}\prod_{j=1,j\neq i}^{m+1}\norma{f_j}_{L^2(\sph{d-1})}\\
&\quad+\sum_{i=1}^{m+1}\norma{(\Theta-I)^2\vphi_{i,1}}_{L^2(\sph{d-1})}\prod_{j=1,j\neq i}^{m+1}\norma{f_j}_{L^2(\sph{d-1})}\\
&\quad+\sum_{\substack{1\leq i<j\leq m+1\\(\eps_i,\eps_j)\in\{0,1\}^2 }} \norma{(\Theta-I)\vphi_{i,\eps_i}}_{L^2(\sph{d-1})}\norma{(\Theta-I)\vphi_{j,\eps_j}}_{L^2(\sph{d-1})}\prod_{k=1,k\notin\{i,j\}}^{m+1}\norma{f_k}_{L^2(\sph{d-1})}.
\end{split}
\end{equation}

\end{lemma}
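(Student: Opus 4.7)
The strategy for both estimates is to expand the difference operator using the product rule \eqref{eq:expansionM}, substitute the decomposition $f_j=\vphi_{j,0}+\vphi_{j,1}$, and then recognize the main contributions as copies of $(\Theta-I)\Lop[\dots]$ via a second application of \eqref{eq:expansionM} together with the symmetry \eqref{eq:symmetryM}. All other terms are controlled by combining the basic $L^2$ bound \eqref{eq:basicL2} with the $\mathrm{SO}(d)$-invariance of $\sigma_{d-1}$, which ensures $\norma{\Theta g}_{L^2(\sph{d-1})}=\norma{g}_{L^2(\sph{d-1})}$.

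For \eqref{eq:multilinearBoundSplit}, applying \eqref{eq:expansionM} to $(\Theta-I)\Mop(f_1,\dots,f_{m+1})$ and writing $(\Theta-I)f_i=(\Theta-I)\vphi_{i,0}+(\Theta-I)\vphi_{i,1}$ splits the expansion into a piece controlled directly by \eqref{eq:basicL2} (producing the third sum in the right-hand side) and a piece carrying $(\Theta-I)\vphi_{i,0}$ at the $i$-th slot. Decomposing each remaining $f_j=\vphi_{j,0}+\vphi_{j,1}$ in the latter, each of the $2^m-1$ mixed products carries a factor $\norma{\vphi_{j_0,0}}_{L^2}\leq\eps\norma{f_{j_0}}_{L^2}$ with $j_0\neq i$, producing the second sum via \eqref{eq:basicL2}. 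The unique pure term,
\[ \Mop(\vphi_{1,1},\ldots,\vphi_{i-1,1},(\Theta-I)\vphi_{i,0},\Theta\vphi_{i+1,1},\ldots,\Theta\vphi_{m+1,1}), \]
is precisely the $k=i$ summand that appears when \eqref{eq:expansionM} is applied to $(\Theta-I)\Mop(\vphi_{1,1},\ldots,\vphi_{i-1,1},\vphi_{i,0},\vphi_{i+1,1},\ldots,\vphi_{m+1,1})$, which by \eqref{eq:symmetryM} equals $(\Theta-I)\Lop[\vphi_{1,1},\ldots,\mathring\vphi_{i,1},\ldots,\vphi_{m+1,1}](\vphi_{i,0})$; the remaining $k\neq i$ summands from this expansion carry a factor $\norma{\vphi_{i,0}}_{L^2}\leq\eps\norma{f_i}_{L^2}$ and are absorbed into the third sum.

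For \eqref{eq:2ndDiffBoundSplit}, \eqref{eq:expansionM} is applied twice, producing a double sum indexed by $(i,k)$. When $k\neq i$, each summand is bounded in $L^2$ by $\norma{(\Theta-I)f_i}_{L^2}\norma{(\Theta-I)f_k}_{L^2}\prod_{j\neq i,k}\norma{f_j}_{L^2}$ (using $\norma{\Theta(\Theta-I)f_k}_{L^2}=\norma{(\Theta-I)f_k}_{L^2}$), and inserting $f_i=\vphi_{i,0}+\vphi_{i,1}$ and $f_k=\vphi_{k,0}+\vphi_{k,1}$ directly yields the fourth sum. When $k=i$, the summand is $\Mop(f_1,\dots,(\Theta-I)^2 f_i,\Theta^2 f_{i+1},\dots,\Theta^2 f_{m+1})$, and the argument of the first part is repeated: the $(\Theta-I)^2\vphi_{i,1}$ contribution yields the third sum, the mixed $\vphi_{j_0,0}$-terms yield the second sum, and the pure term is matched against $(\Theta-I)\Lop[\vphi_{1,1},\ldots,\mathring\vphi_{i,1},\ldots,\vphi_{m+1,1}]((\Theta-I)\vphi_{i,0})$ via another application of \eqref{eq:expansionM}. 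An extra complication is that the double expansion produces $\Theta^2\vphi_{j,1}$ at positions $j>i$, whereas the single expansion of $\Lop$ only produces $\Theta\vphi_{j,1}$; the discrepancy is eliminated by a further telescoping over those positions, each step contributing an extra $\norma{(\Theta-I)\vphi_{j,1}}_{L^2}$ factor that fits into the fourth sum after invoking $\norma{(\Theta-I)^2\vphi_{i,0}}_{L^2}\leq 2\norma{(\Theta-I)\vphi_{i,0}}_{L^2}$.

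The main technical difficulty is not conceptual but combinatorial: the multiple multilinear expansions produce many residual terms, which must be systematically classified. The guiding principle is that every residual either (i) contains two distinct $(\Theta-I)\vphi_{\cdot,\cdot}$ factors, fitting into the fourth sum, or (ii) contains a single $(\Theta-I)\vphi_{\cdot,\cdot}$ factor together with a free $\vphi_{j_0,0}$ supplying the $\eps$ needed for the second sum (or, when no $\eps$ is needed, directly into the third sum). A systematic verification of this classification completes the proof.
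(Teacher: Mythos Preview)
Your proof is correct and rests on the same three ingredients as the paper's: the product rule \eqref{eq:expansionM}, the symmetry \eqref{eq:symmetryM}, and the basic $L^2$ bound \eqref{eq:basicL2}. The only real difference is the order of operations. You apply $(\Theta-I)$ (or $(\Theta-I)^2$) first via \eqref{eq:expansionM} and then insert the decompositions $f_j=\vphi_{j,0}+\vphi_{j,1}$, whereas the paper expands $\Mop(f_1,\dots,f_{m+1})=\sum_{\vec\eps\in\{0,1\}^{m+1}}\Mop(\vphi_{1,\eps_1},\dots,\vphi_{m+1,\eps_{m+1}})$ first, groups the summands according to whether $\vec\eps$ contains zero, one, or at least two zeros, and only then applies the difference operator. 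In the paper's ordering the ``one zero'' block is already $\Lop[\vphi_{1,1},\dots,\mathring\vphi_{i,1},\dots,\vphi_{m+1,1}](\vphi_{i,0})$, so $(\Theta-I)\Lop[\dots](\vphi_{i,0})$ appears directly rather than having to be reassembled from the $k=i$ summand of a second application of \eqref{eq:expansionM}. For \eqref{eq:2ndDiffBoundSplit} the paper moreover uses symmetry to place $\vphi_{i,0}$ in the \emph{last} slot before invoking \eqref{eq:expansionM}, so that the $k=m+1$ term is exactly $\Lop[\dots]((\Theta-I)\vphi_{i,0})$ with no entries after it; this sidesteps the $\Theta^2$ versus $\Theta$ mismatch that you handle by an additional telescoping step. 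Both routes arrive at the same bounds, and your careful treatment of the residual classification is entirely valid.
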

\noindent Estimates \eqref{eq:multilinearBoundSplit} and \eqref{eq:2ndDiffBoundSplit} exhibit a certain degree of asymmetry with respect to the role played by the functions $\vphi_{i,0}$ and $\vphi_{i,1}$. This is in order to ensure that the less smooth terms $\norma{(\Theta-I)\vphi_{i,0}}_{L^2(\sph{d-1})}$ and $\norma{(\Theta-I)^2\vphi_{i,0}}_{L^2(\sph{d-1})}$  always carry a mitigating factor of $\eps$. 

\begin{proof}[Proof of Lemma \ref{lem:forBootstraping}]
	Decompose each $f_j=\vphi_{j,0}+\vphi_{j,1}$ as in the statement of the lemma.
	Substituting this into $g:=\Mop(f_1,\dotsc,f_{m+1})$, and using the multilinearity of $\Mop$ together with the permutation symmetry \eqref{eq:symmetryM}, we have that
	\begin{align*}
	g&=\sum_{(\eps_1,\dotsc,\eps_{m+1})\in\{0,1\}^{m+1}} \Mop(\vphi_{1,\eps_1},\dotsc,\vphi_{m+1,\eps_{m+1}})\\
	&=\Mop(\vphi_{1,1},\dots,\vphi_{m+1,1})+\sum_{i=1}^{m+1}\Lop[\vphi_{1,1},\dotsc,\mathring\vphi_{i,1},\dotsc,\vphi_{m+1,1}](\vphi_{i,0})\\
	&\quad+\sum_{\substack{(\eps_1,\dotsc,\eps_{m+1})\in\{0,1\}^{m+1}\\\eps_1+\dots+\eps_{m+1}\leq m-1 }} \Mop(\vphi_{1,\eps_1},\dotsc,\vphi_{m+1,\eps_{m+1}}).
	\end{align*}
	The first, second and third summands in the latter expression correspond to those cases in which exactly none, one, or at least two of the $\eps_i$'s  are equal to 0, respectively. Therefore,
	\begin{align}
	(\Theta-I)g=&(\Theta-I)\Mop(\vphi_{1,1},\dots,\vphi_{m+1,1})+\sum_{i=1}^{m+1}(\Theta-I)\Lop[\vphi_{1,1},\dotsc,\mathring\vphi_{i,1},\dotsc,\vphi_{m+1,1}](\vphi_{i,0})\notag\\
	\label{eq:secondMterm}
	&+\sum_{\substack{(\eps_1,\dotsc,\eps_{m+1})\in\{0,1\}^{m+1}\\\eps_1+\dots+\eps_{m+1}\leq m-1 }} (\Theta-I)\Mop(\vphi_{1,\eps_1},\dotsc,\vphi_{m+1,\eps_{m+1}}).
	\end{align}
	In order to $L^2$-bound the terms coming from the latter sum in \eqref{eq:secondMterm}, we appeal to identity \eqref{eq:expansionM} for each summand, and obtain a further sum of terms of the form 
	$$\Mop(\vphi_{1,\eps_1},\dotsc,\vphi_{i-1,\eps_{i-1}},(\Theta-I)\vphi_{i,\eps_{i}},\Theta \vphi_{i+1,\eps_{i+1}},\dots,\Theta\vphi_{m+1,\eps_{m+1}} ).$$ 
	The corresponding $L^2$-norms can be bounded via the basic estimate \eqref{eq:basicL2}, yielding:
	\begin{align}
	\norma{\Mop(\vphi_{1,\eps_1},\dotsc,\vphi_{i-1,\eps_{i-1}},(\Theta-I)\vphi_{i,\eps_{i}},\Theta \vphi_{i+1,\eps_{i+1}},\ldots,\Theta\vphi_{m+1,\eps_{m+1}})}_{L^2(\sph{d-1})}\notag\\
	\lesssim\norma{(\Theta-I)\vphi_{i,\eps_{i}}}_{L^2(\sph{d-1})}\prod_{j=1,\,j\neq i}^{m+1}\norma{\vphi_{j,\eps_j}}_{L^2(\sph{d-1})}.\label{eq:firsttermsum}
	\end{align}
	As noted before, the condition $\eps_1+\dots+\eps_{m+1}\leq m-1$ implies the existence of at least two distinct indices $i'\neq j'$, such that $\eps_{i'}=\eps_{j'}=0$. In this way, \eqref{eq:firsttermsum} is bounded by
	$$\eps\norma{(\Theta-I)\vphi_{i,0}}_{L^2(\sph{d-1})}\prod_{j=1,j\neq i}^{m+1}\norma{f_j}_{L^2(\sph{d-1})}$$
	if $\eps_i=0$, or even better by 
	$$\eps^2\norma{(\Theta-I)\vphi_{i,1}}_{L^2(\sph{d-1})}\prod_{j=1,j\neq i}^{m+1}\norma{f_j}_{L^2(\sph{d-1})}$$
	if $\eps_i=1$. Finally, observe that
	\[ \norma{(\Theta-I)\Mop(\vphi_{1,1},\dots,\vphi_{m+1,1})}_{L^2(\sph{d-1})}\leq \sum_{i=1}^{m+1}\norma{(\Theta-I)\vphi_{i,1}}_{L^2(\sph{d-1})}\prod_{j=1,j\neq i}^{m+1}\norma{f_j}_{L^2(\sph{d-1})}.\]
	Adding up all the contributions, we obtain \eqref{eq:multilinearBoundSplit}.
	Considering now \eqref{eq:2ndDiffBoundSplit}, we start from \eqref{eq:secondMterm}, apply $\Theta-I$ to both sides, and obtain
	\begin{equation}\label{eq:secondDiffg}
	\begin{split}
	(\Theta-I)^2g=&(\Theta-I)^2\Mop(\vphi_{1,1},\dots,\vphi_{m+1,1})\\
	&\quad+\sum_{i=1}^{m+1}(\Theta-I)^2 \Mop(\vphi_{1,1},\dotsc,\mathring\vphi_{i,1},\dotsc,\vphi_{m+1,1},\vphi_{i,0})\\
	&\quad+\sum_{\substack{(\eps_1,\dotsc,\eps_{m+1})\in\{0,1\}^{m+1}\\\eps_1+\dots+\eps_{m+1}\leq m-1 }} (\Theta-I)^2\Mop(\vphi_{1,\eps_1},\dotsc,\vphi_{m+1,\eps_{m+1}}).
	\end{split}
	\end{equation}
	Using \eqref{eq:expansionM} twice together with the basic estimate \eqref{eq:basicL2}, the first term on the latter right-hand side can be bounded as follows:
	\begin{align*}
	&\norma{(\Theta-I)^2\Mop(\vphi_{1,1},\dots,\vphi_{m+1,1})}_{L^2(\sph{d-1})}\\
	&\lesssim\sum_{1\leq i<j\leq m+1} \norma{(\Theta-I)\vphi_{i,1}}_{L^2(\sph{d-1})}\norma{(\Theta-I)\vphi_{j,1}}_{L^2(\sph{d-1})}\prod_{k:k\notin\{i,j\}}\norma{f_k}_{L^2(\sph{d-1})}\\
	&\quad+\sum_{i=1}^{m+1}\norma{(\Theta-I)^2\vphi_{i,1}}_{L^2(\sph{d-1})}\prod_{j:j\neq i}\norma{f_j}_{L^2(\sph{d-1})}.
	\end{align*}
	An upper bound similar to the preceding one also applies to each term from the third sum in \eqref{eq:secondDiffg}, but this can be refined as follows: 
		\begin{align*}
	&\norma{(\Theta-I)^2\Mop(\vphi_{1,\eps_1},\dotsc,\vphi_{m+1,\eps_{m+1}})}_{L^2(\sph{d-1})}\\
	&\lesssim\sum_{1\leq i<j\leq m+1} \norma{(\Theta-I)\vphi_{i,\eps_i}}_{L^2(\sph{d-1})}\norma{(\Theta-I)\vphi_{j,\eps_j}}_{L^2(\sph{d-1})}\prod_{k:k\notin\{i,j\}}\norma{f_k}_{L^2(\sph{d-1})}\\
	&\quad+\sum_{i=1,\eps_i=1}^{m+1}\eps^2\norma{(\Theta-I)^2\vphi_{i,1}}_{L^2(\sph{d-1})}\prod_{j:j\neq i}\norma{f_j}_{L^2(\sph{d-1})}\\
	&\quad+\sum_{i=1,\eps_i=0}^{m+1}\eps\norma{(\Theta-I)^2\vphi_{i,0}}_{L^2(\sph{d-1})}\prod_{j:j\neq i}\norma{f_j}_{L^2(\sph{d-1})}.
	\end{align*}
	Lastly,  each of the terms coming from the second sum in \eqref{eq:secondDiffg} can be bounded as follows: 
	\begin{align*}
	&\norma{(\Theta-I)^2\Mop(\vphi_{1,1},\dotsc,\mathring\vphi_{i,1},\dotsc,\vphi_{m+1,1},\vphi_{i,0})}_{L^2(\sph{d-1})}\\
	&\lesssim\sum_{\substack{1\leq j<k\leq m+1\\ j\neq i, k\neq i }} \eps\norma{(\Theta-I)\vphi_{j,1}}_{L^2(\sph{d-1})}\norma{(\Theta-I)\vphi_{k,1}}_{L^2(\sph{d-1})}\prod_{\ell\notin\{j,k\}}\norma{f_\ell}_{L^2(\sph{d-1})}\\
	&\quad+\sum_{j=1,j\neq i}^{m+1}\norma{(\Theta-I)\vphi_{i,0}}_{L^2(\sph{d-1})}\norma{(\Theta-I)\vphi_{j,1}}_{L^2(\sph{d-1})}\prod_{k\notin\{i,j\}}\norma{f_k}_{L^2(\sph{d-1})}\\
	&\quad+\norma{(\Theta-I)\Lop[\vphi_{1,1},\dotsc,\mathring\vphi_{i,1},\dotsc,\vphi_{m+1,1}]((\Theta-I)\vphi_{i,0})}_{L^2(\sph{d-1})}.
	\end{align*}
	Adding up all the contributions yields \eqref{eq:2ndDiffBoundSplit}. This completes the proof of the lemma.
\end{proof}

\subsection{Proof of Theorem \ref{thm:smoothnessTheorem}}

We are now ready to start with the proof of Theorem \ref{thm:smoothnessTheorem} in earnest.
As a first step, we establish an initial regularity kick. 
Henceforth we  assume the parameter $\lambda$ in equation \eqref{eq:generalEL} to be nonzero,
in which case $\lambda$ can be absorbed into the function $a$; see the final remark in \S\ref{sec:finalrmk} below. We are thus interested in solutions of the equation
\begin{equation}\label{eq:newgeneralEL}
a\cdot \Mop(R^{k_1}(f),\ldots,R^{k_{m+1}}(f))=f,\quad\sigma_{d-1}\text{-a.e. on }\sph{d-1}.
\end{equation}

\begin{proposition}\label{prop:regularityGain}
	Let $(d,m)\in\mathfrak{U}$ and $(k_1,\ldots,k_{m+1})\in\{0,1\}^{m+1}$. Assume that  $a\in \Lambda_\kappa(\sph{d-1})$, for some  $\kappa\in(0,1)$. Then, given any complex-valued solution $f\in L^2(\mathbb S^{d-1})$ of equation \eqref{eq:newgeneralEL}, there exists $s>0$ such that $f\in \mathcal H^s$.
\end{proposition}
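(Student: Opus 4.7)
The plan is to directly verify the defining estimate \eqref{eq:mathcalHsNorm} for $\mathcal H^s$ with some $s>0$: namely, to bound $\|f\circ\Theta-f\|_{L^2}$ by a positive power of $|\Theta-I|$ along the one-parameter subgroups $\Theta=e^{tX_{i,j}}$, $|t|\leq 1$. Writing $\mathcal M:=\Mop(R^{k_1}(f),\dotsc,R^{k_{m+1}}(f))$ and applying $(\Theta-I)$ to \eqref{eq:newgeneralEL} via the Leibniz-type identity $(\Theta-I)(a\cdot h)=((\Theta-I)a)\cdot\Theta h+a\cdot(\Theta-I)h$, I would split
\[(\Theta-I)f=((\Theta-I)a)\cdot\Theta\mathcal M+a\cdot(\Theta-I)\mathcal M.\]
The first summand is easy: $\|(\Theta-I)a\|_{L^\infty}\leq \|a\|_{\Lambda_\kappa}|\Theta-I|^\kappa$ and the basic bound \eqref{eq:basicL2} (together with the fact that $R$ and $\Theta$ are $L^2$-isometries) gives $\|\Theta\mathcal M\|_{L^2}\lesssim \|f\|_{L^2}^{m+1}$, so this piece contributes $O(|\Theta-I|^\kappa)$. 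The entire difficulty therefore reduces to controlling $\|(\Theta-I)\mathcal M\|_{L^2}$ by a positive power of $|\Theta-I|$.

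For a parameter $\eps>0$ to be chosen below, the plan is to approximate $f=\vphi_0+\vphi_1$ with $\|\vphi_0\|_{L^2}<\eps\|f\|_{L^2}$ and $\vphi_1\in C^\infty(\sph{d-1})$, set $\vphi_{j,\ell}:=R^{k_j}(\vphi_\ell)$ and $f_j:=R^{k_j}(f)$, and apply estimate \eqref{eq:multilinearBoundSplit} of Lemma \ref{lem:forBootstraping}. Three types of summands appear. First, the $\Lop$-term $\|(\Theta-I)\Lop[\vphi_{1,1},\dotsc,\mathring\vphi_{i,1},\dotsc,\vphi_{m+1,1}](\vphi_{i,0})\|_{L^2}$: by Corollary \ref{lem:HboundLop}, $\Lop$ maps $L^2$ continuously into $\mathcal H^\alpha$ for some universal $\alpha=\alpha(d,m)>0$ with an operator norm $C(\vphi_1)$, so this contributes at most $C(\vphi_1)\eps|\Theta-I|^\alpha\|f\|_{L^2}$. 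Second, the smooth term $\|(\Theta-I)\vphi_{i,1}\|_{L^2}\lesssim_{\vphi_1}|\Theta-I|$, directly from $\vphi_1\in C^\infty$. Third, the self-referential term $\eps\|(\Theta-I)\vphi_{i,0}\|_{L^2}$: since $R$ is an $L^2$-isometry commuting with composition by $\Theta$ (a consequence of the linearity of $\Theta$), this equals $\eps\|(\Theta-I)\vphi_0\|_{L^2}$, and writing $\vphi_0=f-\vphi_1$ it is further bounded by $\eps\bigl(\|(\Theta-I)f\|_{L^2}+C(\vphi_1)|\Theta-I|\bigr)$.

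Collecting everything yields an inequality of the schematic form
\[\|(\Theta-I)f\|_{L^2}\leq A_1|\Theta-I|^\kappa+A_2(\vphi_1)\eps|\Theta-I|^\alpha+A_3(\vphi_1)|\Theta-I|+A_4\eps\|(\Theta-I)f\|_{L^2},\]
with constants depending on $d,m,\|a\|_{\Lambda_\kappa},\|f\|_{L^2}$ but not on $\Theta$. Fixing $\eps$ small enough that $A_4\eps\leq 1/2$ (which in turn fixes $\vphi_1$ and the constants $A_2,A_3$) absorbs the last term into the left-hand side; since $|\Theta-I|\leq 1$, setting $s:=\min\{\kappa,\alpha\}>0$ produces $\|(\Theta-I)f\|_{L^2}\lesssim|\Theta-I|^s$ uniformly over $\Theta=e^{tX_{i,j}}$ with $|t|\leq 1$, so that \eqref{eq:mathcalHsNorm} gives $f\in\mathcal H^s$. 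The main obstacle sits in the first step above: the smoothing gain $|\Theta-I|^\alpha$ for $\Lop$ rests on Corollary \ref{lem:HboundLop}, which in the boundary regime $(d,m)\in\EEC$ relies on the delicate H\"older-type estimates for two-, three- and four-fold convolution measures established in \S\ref{sec:Holder} and \S\ref{sec:HsEEC} (whereas outside the boundary it follows from the cleaner Lemma \ref{lem:MLambdaBound}); without this universal exponent $\alpha>0$ the $\eps$-bootstrap could not close.
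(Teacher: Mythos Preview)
Your proof is correct and follows essentially the same approach as the paper's: split $f$ into a small rough part and a smooth part, apply $(\Theta-I)$ to the equation, invoke Lemma \ref{lem:forBootstraping} and Corollary \ref{lem:HboundLop}, and absorb the self-referential term. The only cosmetic difference is that the paper writes the equation for the rough part $g_\eps=f-\vphi_\eps$ and bounds $\|(\Theta-I)g_\eps\|_{L^2}$ directly (so the absorbed term is already $\eps\|(\Theta-I)g_\eps\|_{L^2}$), whereas you keep $f$ on the left and pass through the triangle inequality $\|(\Theta-I)\vphi_0\|_{L^2}\leq\|(\Theta-I)f\|_{L^2}+\|(\Theta-I)\vphi_1\|_{L^2}$; the two bookkeepings are equivalent.
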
 

\begin{proof}
	Let $f\in L^2(\mathbb{S}^{d-1})$ be a complex-valued solution of  \eqref{eq:newgeneralEL}, and let $\eps\in(0,1)$ be a small constant, to be chosen in the course of the argument. 
	We may decompose $f=g_\eps+\vphi_\eps$, where $\norma{g_\eps}_{L^2}<\eps\norma{f}_{L^2}$, and $\vphi_\eps\in C^\infty$. In this way, we have that $\norma{\vphi_\eps}_{L^2}\leq (1+\eps)\norma{f}_{L^2}\leq 2\norma{f}_{L^2}$; it is important that the latter bound is independent of $\eps$.
	By multilinearity of $\Mop$, no generality is lost in assuming that $f$ is $L^2$-normalized,  $\norma{f}_{L^2}=1$. 
In  \eqref{eq:newgeneralEL}, we further suppose that ${k_i}=0$, for every $1\leq i\leq m+1$. This assumption is made for notational purposes only, since the exact same argument applies in general.\footnote{Note that the operator $R$ is a linear isometry, and that $\norma{(\Theta-I)f}_{L^2}=\norma{(\Theta-I)f_\star}_{L^2}$, for every $\Theta\in \mathrm{SO}(d)$.
} Substituting $f=g_\eps+\vphi_\eps$ into the right-hand side of \eqref{eq:newgeneralEL}, we then see that the function $g_\eps$ satisfies the equation
	\[ g_\eps=a\cdot\Mop(f,\dots,f)-\vphi_\eps. \]
	Given $\Theta\in \textrm{SO}(d)$, apply $\Theta-I$ to both sides of the latter identity, yielding
	\begin{align*}
	(\Theta-I)g_\eps&=(\Theta-I)a\cdot \Theta\Mop(f,\dots,f)+a\cdot (\Theta-I)\Mop(f,\dots,f)-(\Theta-I)\vphi_\eps.
	\end{align*}
	Consequently,
	\begin{align*}
	\norma{(\Theta-I)g_\eps}_{L^2(\sph{d-1})}&\leq \norma{(\Theta-I)a}_{L^\infty(\sph{d-1})}\norma{ \Mop(f,\dots,f)}_{L^2(\sph{d-1})}+\norma{(\Theta-I)\vphi_\eps}_{L^2(\sph{d-1})}\\
	&\quad+\norma{a}_{L^\infty(\sph{d-1})}\norma{(\Theta-I)\Mop(f,\dots,f)}_{L^2(\sph{d-1})}.
	\end{align*}
	We estimate the third summand on the right-hand side of the latter inequality with the help of Lemma \ref{lem:forBootstraping}, yielding 
	\begin{align*}
	\norma{(\Theta-I)&g_\eps}_{L^2(\sph{d-1})}\lesssim \norma{(\Theta-I)a}_{L^\infty(\sph{d-1})}+(1+\norma{a}_{L^\infty(\sph{d-1})})\norma{(\Theta-I) \vphi_{\eps}}_{L^2(\sph{d-1})}\\
	&\quad+\norma{a}_{L^\infty(\sph{d-1})}\Bigl(\norma{(\Theta-I)\Lop[\vphi_{\eps},\dotsc,\vphi_{\eps}](g_{\eps})}_{L^2(\sph{d-1})}+\eps\norma{(\Theta-I) g_{\eps}}_{L^2(\sph{d-1})}\Bigr).
	\end{align*}
	We may now choose $\eps\in(0,1)$ small enough, depending on $d, m$, and on $\norma{a}_{L^\infty}$, so that the last term on the right-hand side can be absorbed into the left-hand side, yielding
	\begin{align*}
	\norma{(\Theta-I)g_\eps}_{L^2(\sph{d-1})}&\lesssim\norma{(\Theta-I)a}_{L^\infty(\sph{d-1})}
	+(1+\norma{a}_{L^\infty(\sph{d-1})})\norma{(\Theta-I)\vphi_\eps}_{L^2(\sph{d-1})}\\
	&\quad+\norma{a}_{L^\infty(\sph{d-1})} \norma{(\Theta-I)\Lop[\vphi_{\eps},\dotsc,\vphi_{\eps}](g_\eps)}_{L^2(\sph{d-1})}.
	\end{align*}
Choose $s\in(0,1)$ in such a way that $s\leq\kappa$ and $\Lop[\vphi_{\eps},\dotsc,\vphi_{\eps}]$ is bounded from $L^2$ to $\mathcal H^s$, as promised by Corollary \ref{lem:HboundLop}. Such an $s$ can be chosen independently of the function $\vphi_\eps$, and therefore does not depend on  $\eps$ either (but the implicit constant may depend on $\eps$, which we now take as fixed). Setting $\Theta=e^{tX_{i,j}}$, for some $1\leq i<j\leq d$, multiplying by $\ab{t}^{-s}$, and taking the supremum over $\ab{t}\in[0,1]$, yields
	\begin{multline}\label{eq:gisregular} 
	\sup_{\ab{t}\leq 1}\ab{t}^{-s}\norma{(e^{tX_{i,j}}-I)g_\eps}_{L^2(\sph{d-1})}\\
	\lesssim \norma{a}_{\Lambda_s(\sph{d-1})}+(1+\norma{a}_{L^\infty(\sph{d-1})})\norma{\vphi_\eps}_{\mathcal H^s}+C_\eps\norma{a}_{L^\infty(\sph{d-1})}\norma{g_\eps}_{L^2(\sph{d-1})}<\infty. 
	\end{multline}
	Here we are using that the $\Lambda_s$-norm can be controlled by the $\Lambda_\kappa$-norm since $s\leq\kappa$.
	Estimate \eqref{eq:gisregular} implies that $g_\eps\in \mathcal H^s$, and therefore $f\in \mathcal H^s$ as well.
	The proof of the proposition is now complete.
\end{proof}

\begin{remark}\label{eq:freeGain}
	If $(d,m)\in\mathfrak{U}\setminus \EEC$, then there is an automatic gain in the initial regularity of any complex-valued $f\in L^2(\sph{d-1})$ solution of equation \eqref{eq:newgeneralEL}. Indeed, we claim that in that case $f$ necessarily coincides with a continuous function on $\mathbb S^{d-1}$. To see why this must be so, start by considering the case $d, m\geq 3$. Writing $m+1=(m-1)+2$, where $m-1\geq 2$, we see that the convolution product on the left-hand side of \eqref{eq:generalEL} can be written as
	\[ (R^{k_{1}}(f)\sigma_{d-1}\ast \dotsm\ast  R^{k_{m-1}}(f)\sigma_{d-1})\ast(R^{k_m}(f)\sigma_{d-1}\ast R^{k_{m+1}}(f)\sigma_{d-1}). \]
	Since each of the two functions in the preceding convolution belongs to $L^2(\R^d)$, their convolution defines a continuous function of bounded support on $\R^d$. It follows that its restriction to the unit sphere also defines a continuous function on $\sph{d-1}$, as claimed. An analogous argument works for the case $d=2$ and $m\geq 5$.
\end{remark}

The second main step  is a bootstrapping procedure which will complete the proof of Theorem \ref{thm:smoothnessTheorem}. 
Indeed, in light of Remark \ref{rem:SobolevHolderIntegerSphere},
Propositions \ref{prop:regularityGain} and \ref{prop:bootstrapingprop} together imply that a solution $f$ of equation \eqref{eq:newgeneralEL} (and therefore of equation \eqref{eq:generalEL} if $\lambda\neq 0$) satisfies $f\in H^r$, for every $r\geq 0$.
From Sobolev embedding, see e.g.\@ \cite[Theorem 2.7]{He99}, it then follows that $f\in C^\infty(\sph{d-1})$.

\begin{proposition}\label{prop:bootstrapingprop}
	Let $(d,m)\in\mathfrak{U}$.
	Let $(k_1,\ldots,k_{m+1})\in\{0,1\}^{m+1}$, $\lambda\in\Co\setminus\{0\}$, and $a\in C^\infty(\sph{d-1})$. Then there exists $\alpha>0$ with the following property. Let $f$ be a solution of equation \eqref{eq:newgeneralEL} satisfying $f\in\mathcal{H}^s$, for some $s>0$. Then $f\in\mathcal{H}^t$, for every $t\in[0,s+\min\{s-\lfloor s\rfloor,\alpha\}]\setminus \Z$.
\end{proposition}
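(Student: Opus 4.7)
The plan is to refine the strategy of Proposition \ref{prop:regularityGain}, now exploiting the known $\mathcal{H}^s$-regularity of $f$. Fix $\alpha>0$ as provided by Corollary \ref{lem:HboundLop}, and set $\beta=\min\{s-\lfloor s\rfloor,\alpha\}$. By multilinearity of $\Mop$, I may normalize $\|f\|_{L^2(\sph{d-1})}=1$, absorb $\lambda$ into $a$, and take $(k_1,\ldots,k_{m+1})=(0,\ldots,0)$ throughout, since $R$ is an $L^2$-isometry and the general case is identical. Using the inclusions $\mathcal{H}^t\subseteq \mathcal{H}^{t'}$ for $t'\leq t$ within the same integer interval, it is enough to prove $f\in \mathcal{H}^{s+\beta}$ under the running assumption $s+\beta\notin\Z$, the integer-threshold case being accessed by taking $\beta$ slightly smaller.

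For the base case $s\in(0,1)$, decompose $f=\varphi_0+\varphi_1$ with $\|\varphi_0\|_{\mathcal{H}^s}<\eps$ and $\varphi_1\in C^\infty(\sph{d-1})$, for a small parameter $\eps>0$ to be chosen (possible via spherical-harmonic truncation). Apply $(\Theta-I)^2$ with $\Theta=e^{rX_{i,j}}$, $|r|\leq 1$, to both sides of $f=a\cdot\Mop(f,\ldots,f)$, and expand using Leibniz. The Leibniz terms hitting $a$ contribute $O(|r|^2)$, leaving $a\cdot(\Theta-I)^2\Mop(f,\ldots,f)$, to which Lemma \ref{lem:forBootstraping}, estimate \eqref{eq:2ndDiffBoundSplit}, is directly applicable. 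The critical contribution on the right-hand side is $\|(\Theta-I)\Lop[\varphi_1,\ldots,\varphi_1]((\Theta-I)\varphi_0)\|_{L^2}$: since $\|(\Theta-I)\varphi_0\|_{L^2}\leq \eps|r|^s$, Corollary \ref{lem:HboundLop} yields $\|\Lop((\Theta-I)\varphi_0)\|_{\mathcal{H}^\alpha}\lesssim \eps|r|^s$, and the outer $(\Theta-I)$ supplies an additional $|r|^\alpha$, making this term $\lesssim \eps|r|^{s+\alpha}$.

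The remaining terms in \eqref{eq:2ndDiffBoundSplit} split as: smooth-factor contributions $O(|r|^2)$; mixed first-difference products $O(\eps|r|^{s+1})$; purely-$\varphi_0$ products $O(\eps^2|r|^{2s})\leq O(\eps^2|r|^{s+\beta})$ since $\beta\leq s$; and the delicate term $\eps\|(\Theta-I)^2\varphi_0\|_{L^2}$, which via $\varphi_0=f-\varphi_1$ becomes $\eps\|(\Theta-I)^2 f\|_{L^2}+O(|r|^2)$. For $\eps$ small enough, depending only on $d,m,\|a\|_{L^\infty}$, the $\eps$-weighted piece is absorbed into the left-hand side, and the collected bounds yield $\|(\Theta-I)^2 f\|_{L^2}\lesssim |r|^{s+\beta}$ uniformly in $|r|\leq 1$ and in $X_{i,j}$. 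The second-difference characterization of $\mathcal{H}^{s+\beta}$ valid in $(0,2)\setminus\{1\}$ (cf.\ the subsection on equivalent definitions of $\mathcal{H}^s$ referenced in the paper) then delivers $f\in \mathcal{H}^{s+\beta}$.

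For $\lfloor s\rfloor=k\geq 1$, I differentiate $f=a\cdot\Mop(f,\ldots,f)$ by applying a composition $Y$ of $k$ vector fields $X_{i,j}$. Leibniz together with \eqref{eq:derivativeMop} produce $Yf=(m+1)\,a\cdot\Mop(f,\ldots,f,Yf)+R$, where $R$ collects $\Mop(Y_1 f,\ldots,Y_{m+1}f)$-type terms (with derivatives of $a$ as multipliers) satisfying $\max_j|Y_j|<k$; since each $Y_j f\in \mathcal{H}^{s-|Y_j|}\subseteq\mathcal{H}^{s-k+1}$, inequality \eqref{eq:HsboundforM} places $R\in \mathcal{H}^{s-k+1}\subseteq\mathcal{H}^{s-k+\beta}$. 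I then apply the same three-step analysis to $(\Theta-I)^2 Yf$, with decompositions $f=\varphi_0+\varphi_1$ in $\mathcal{H}^s$ and $Yf=\psi_0+\psi_1$ in $\mathcal{H}^{s-k}$: the leading term gains $|r|^{(s-k)+\alpha}$ from combining the $\mathcal{H}^{s-k}$-smoothness of $\psi_0$ with the $\Lop$-smoothing of Corollary \ref{lem:HboundLop}, absorption of $\eps\|(\Theta-I)^2 Yf\|_{L^2}$ works identically, and the second-difference characterization yields $Yf\in \mathcal{H}^{s-k+\beta}$, hence $f\in \mathcal{H}^{s+\beta}$. The main obstacle is the absorption step, which must be performed uniformly over $\Theta=e^{rX_{i,j}}$ and over all compositions $Y$ of order $k$, together with the verification that the second-difference characterization remains in force when $s+\beta$ crosses an integer---in which case the characterization is invoked for $Yf$ rather than for $f$ directly.
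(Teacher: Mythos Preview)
Your approach is essentially the paper's: decompose $f$ as smooth plus small, apply $(\Theta-I)^2$, invoke Lemma~\ref{lem:forBootstraping} and Corollary~\ref{lem:HboundLop}, absorb the $\eps$-weighted second difference, and conclude via the second-difference characterization of $\mathcal H^{s+\beta}$; for $\lfloor s\rfloor=k\geq 1$, first differentiate $k$ times and run the same scheme on $Yf$. The structure, the key lemmata, and the order of the steps all match.

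There is one genuine issue in your write-up. You decompose $f=\varphi_0+\varphi_1$ with $\|\varphi_0\|_{\mathcal H^s}<\eps$ and $\varphi_1\in C^\infty$, justified ``via spherical-harmonic truncation''. This is not available in general: the space $\mathcal H^s$ carries a $\sup_{|t|\leq 1}$ in its norm \eqref{eq:mathcalHsNorm}, so it is a Nikol'ski\u{\i}--Besov space of type $B^s_{2,\infty}$, in which $C^\infty$ is \emph{not} dense (the closure of smooth functions is the proper ``little'' subspace). The paper avoids this by decomposing only in $L^2$: $\|g_\eps\|_{L^2}<\eps$ with $\varphi_\eps\in C^\infty$, and then simply uses that $g_\eps=f-\varphi_\eps\in\mathcal H^s$ with a (possibly large, $\eps$-dependent) norm. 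This is also exactly what Lemma~\ref{lem:forBootstraping} is set up for. Fortunately your argument survives this correction unchanged: the only place where smallness of $\varphi_0$ is genuinely needed is the absorption of $\eps\|(\Theta-I)^2\varphi_0\|_{L^2}$ into the left-hand side, and there $L^2$-smallness is all that is used. The extra $\eps$'s you attach to the $\Lop$-term and to the first-difference products are superfluous---those terms are already $O(|r|^{s+\beta})$ from the $\mathcal H^s$-membership of $\varphi_0$ (and the $\mathcal H^\alpha$-bound for $\Lop$, whose operator constant may depend on $\eps$ once $\eps$ is fixed). The same remark applies to your higher-order step: decompose $f$ and $Yf$ with $L^2$-small remainders, not $\mathcal H^s$- and $\mathcal H^{s-k}$-small ones.
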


\begin{proof}
	We make a few initial simplifications.
	Firstly, we consider the special case $a\equiv 1$ only, since the general case $a\in C^\infty(\sph{d-1})$ brings no additional complications, as shown by the proof of Proposition \ref{prop:regularityGain}. Secondly, we further  assume that $k_i=0$, for every $1\leq i\leq m+1$; this considerably simplifies the forthcoming notation, but changes nothing fundamental in the analysis. Thirdly, we start by supposing that $s\in(0,1)$. The case $s\geq 1$ will be dealt with at a later stage in the proof.
	
	Assume $\norma{f}_{L^2}=1$, and let $\eps\in(0,1)$, to be chosen in the course of the argument. Decompose $f=g_\eps+\vphi_\eps$, with $\vphi_\eps\in C^\infty(\sph{d-1})$ and $\norma{g_\eps}_{L^2}<\eps$. In particular, $\norma{\vphi_\eps}_{L^2}\leq 2$. Since $f\in\mathcal H^s$, it follows that $g_\eps\in\mathcal H^s$ as well. The equation satisfied by $g_\eps$ is
	\[ g_\eps=\Mop(f,\dots,f)-\vphi_\eps. \]
	Given $\Theta\in \textrm{SO}(d)$, we have that 
	\[ (\Theta-I)^2g_\eps=(\Theta-I)^2\Mop(f,\dots,f)-(\Theta-I)^2\vphi_\eps,  \]
	and therefore
	\[ \norma{(\Theta-I)^2g_\eps}_{L^2(\sph{d-1})}\leq \norma{(\Theta-I)^2\Mop(f,\dots,f)}_{L^2(\sph{d-1})}+\norma{(\Theta-I)^2\vphi_\eps}_{L^2(\sph{d-1})}. \]
	Using Lemma \ref{lem:forBootstraping} to estimate the first term on the right-hand side of the preceding inequality, we obtain
	\begin{align*}
	\norma{(\Theta-I)^2g_\eps&}_{L^2(\sph{d-1})}\lesssim \norma{(\Theta-I)\vphi_\eps}_{L^2(\sph{d-1})}^2+
	\norma{(\Theta-I)^2\vphi_\eps}_{L^2(\sph{d-1})}\\
	&\quad+\norma{(\Theta-I)\vphi_\eps}_{L^2(\sph{d-1})}\norma{(\Theta-I)g_\eps}_{L^2(\sph{d-1})}
	+\norma{(\Theta-I)g_\eps}_{L^2(\sph{d-1})}^2\\
	&\quad+\norma{(\Theta-I)\Lop[\vphi_{\eps},\dotsc,\vphi_{\eps}]((\Theta-I) g_{\eps})}_{L^2(\sph{d-1})}+\eps\norma{(\Theta-I)^2g_\eps}_{L^2(\sph{d-1})}^2.
	\end{align*}
	Now choose $\eps\in(0,1)$ small enough, depending on $d,m$, in such a way that the last term on the latter left-hand side can be absorbed into the right-hand side. With such a choice of $\eps$, the following inequality holds:
	\begin{equation}\label{eq:ineqSecondDiff}
	\begin{split}
	\norma{(\Theta-I)^2g_\eps}_{L^2(\sph{d-1})}&\lesssim \norma{(\Theta-I)\vphi_\eps}_{L^2(\sph{d-1})}^2+
	\norma{(\Theta-I)^2\vphi_\eps}_{L^2(\sph{d-1})}\\
	&\quad+\norma{(\Theta-I)\vphi_\eps}_{L^2(\sph{d-1})}\norma{(\Theta-I)g_\eps}_{L^2(\sph{d-1})}
	+\norma{(\Theta-I)g_\eps}_{L^2(\sph{d-1})}^2\\
	&\quad+\norma{(\Theta-I)\Lop[\vphi_{\eps},\dotsc,\vphi_{\eps}]((\Theta-I) g_{\eps})}_{L^2(\sph{d-1})}.
	\end{split}
	\end{equation}
	Now that $\eps$ has been fixed, Corollary \ref{lem:HboundLop} implies that the operator $\Lop[\vphi_{\eps},\dotsc,\vphi_{\eps}]$ is bounded from $L^2$ to $\mathcal H^\alpha$, for some
	$\alpha\in(0,1)$ independent of $\eps$.
		
	Set $\delta=\min\{s,\alpha\}$, where $\alpha$  is as in the previous paragraph.
	In particular, $\Lop[\vphi_{\eps},\dotsc,\vphi_{\eps}]$ is bounded from $L^2$ to $\mathcal H^\delta$, with operator norm that may depend on $\eps$. 
	Henceforth we consider $\Theta=\Theta(t)=e^{tX_{k,\ell}}$, $1\leq k<\ell\leq d$, and $\ab{t}\leq 1$. 
	The following estimate holds:
	\begin{align*}
	\norma{(\Theta-I)\Lop[\vphi_{\eps},\dotsc,\vphi_{\eps}&]((\Theta-I)g_\eps)}_{L^2(\sph{d-1})}\\
	&\leq \ab{t}^{\delta}\sup_{\ab{\tau}\leq 1}\ab{\tau}^{-\delta}\norma{(\Theta(\tau)-I)
	\Lop[\vphi_{\eps},\dotsc,\vphi_{\eps}]((\Theta(t)-I)g_\eps)}_{L^2(\sph{d-1})}\\
	&\leq \ab{t}^{\delta}\norma{\Lop[\vphi_{\eps},\dotsc,\vphi_{\eps}]((\Theta(t)-I)g_\eps)}_{\mathcal H^\delta}\\
	&\leq C_\eps\ab{t}^{\delta}\norma{(\Theta-I)g_\eps}_{L^2(\sph{d-1})}\\
	&\leq C_\eps\ab{t}^{\delta+s}\norma{g_\eps}_{\mathcal H^s}.
	\end{align*}
	Multiplying \eqref{eq:ineqSecondDiff} by $\ab{t}^{-(s+\delta)}$ yields
	\begin{equation*}
	\begin{split}
	&\ab{t}^{-(s+\delta)}\norma{(\Theta-I)^2g_\eps}_{L^2(\sph{d-1})}\\
	&\lesssim \ab{t}^{-\delta}\norma{(\Theta-I)\vphi_\eps}_{L^2(\sph{d-1})}\ab{t}^{-s}\norma{(\Theta-I)\vphi_\eps}_{L^2(\sph{d-1})}+
	\ab{t}^{-(s+\delta)}\norma{(\Theta-I)^2\vphi_\eps}_{L^2(\sph{d-1})}\\
	&\quad+\ab{t}^{-\delta}\norma{(\Theta-I)\vphi_\eps}_{L^2(\sph{d-1})}\ab{t}^{-s}\norma{(\Theta-I)g_\eps}_{L^2(\sph{d-1})}\\
	&\quad+\ab{t}^{-\delta}\norma{(\Theta-I)g_\eps}_{L^2(\sph{d-1})}\ab{t}^{-s}\norma{(\Theta-I)g_\eps}_{L^2(\sph{d-1})}
	+C_\eps\norma{g_\eps}_{\mathcal H^s}.
	\end{split}
	\end{equation*}
	Now take the supremum over $\ab{t}\leq 1$, and use the facts that $\vphi_\eps\in\mathcal H^r$ for all $0\leq r\notin\Z$, and $g_\eps\in\mathcal H^s\cap \mathcal H^\delta$ (recall that $\delta\leq s$). Invoking the characterization of the $\mathcal H^{s+\delta}$-norm by means of second differences as detailed in \S \ref{subseq:secondDifferences} below, which applies since $s+\delta\in(0,2)$, we obtain that
	\begin{equation*}
	\begin{split}
	\sup_{\ab{t}\leq 1}\ab{t}^{-(s+\delta)}\norma{(\Theta-I)^2g_\eps}_{L^2(\sph{d-1})}&\lesssim \norma{\vphi_\eps}_{\mathcal H^\delta}\norma{\vphi_\eps}_{\mathcal H^s}+
	\norma{\vphi_\eps}_{\mathcal H^{s+\delta}}+\norma{\vphi_\eps}_{\mathcal H^\delta}\norma{g_\eps}_{\mathcal H^s}\\
	&\quad+\norma{g_\eps}_{\mathcal H^\delta}\norma{g_\eps}_{\mathcal H^s}
	+C_\eps\norma{g}_{\mathcal H^s}<\infty.
	\end{split}
	\end{equation*}
In this way, again via second differences, we see that $g_\eps\in\mathcal H^{s+\delta}$, and therefore $f\in \mathcal H^{s+\delta}$ as well.\footnote{If $s+\delta=1$, then $\mathcal{H}^{s+\delta}$ is not defined, but, by using any $\delta'<\delta$ in the reasoning above, the conclusion is that $g_\eps\in\mathcal H^{t}$, for every $t<1$, and therefore $f \in\mathcal H^{t}$, for every $t<1$.}
This concludes the proof of the proposition in the special case when $s\in (0,1)$.
	
Repeated applications of the previous step reveal that if $f\in \mathcal H^s$ for some $s\in (0,1)$, then $f\in \mathcal H^{1+\gamma}$ for some $\gamma\in (0,1)$. We complete the proof of the proposition by induction. In order to treat  exponents $s=k+\gamma$, with $k\in\N$ and $\gamma\in (0,1)$, we use the product rule \eqref{eq:derivativeMop}, and differentiate $k$ times  identity \eqref{eq:newgeneralEL} with respect to $X\in\{X_{i,j}\colon 1\leq i<j\leq d\}$, thus  obtaining an equation for $X^kf\in\mathcal H^\gamma$. Decomposing $X^kf=g_\eps+\vphi_\eps$, with $\vphi_\eps\in C^\infty(\sph{d-1})$ and $\norma{g_\eps}_{L^2}\leq \eps\norma{X^kf}_{L^2}$, we can use the same method as before to show that $g_\eps\in \mathcal H^{t}$, for any $t\in[s,s+\min\{\gamma,\alpha\}]\setminus\Z$. In a similar way, we may analyze the mixed derivatives $Yf:=Y_1\dotsc Y_k f$, where $Y_\ell\in\{X_{i,j}: 1\leq i<j\leq d\}$, $1\leq \ell\leq k$. 
In what follows, we provide the details.
	
For simplicity, we only consider powers of the same vector field $X$, but note that the exact same method would apply to a more general vector field $Y$ as in the previous paragraph. 
The equation satisfied by $X^k f$ is of the  form
	\begin{equation}\label{eq:EqXk}
	 X^kf=\sum_{\substack{\vec{k}:=(k_1,\dotsc,k_{m+1})\in\N_0^{m+1}\\
			k_1+\dotsb+k_{m+1}=k}}c_{\vec{k}}\,\Mop(X^{k_1}f,\dotsc,X^{k_{m+1}}f),
			\end{equation}
	for some constants $c_{\vec{k}}>0$. Note that  $X^{k_j}f\in\mathcal H^{1+\gamma}$ if $k_j<k$. Thus we are led to splitting the sum in \eqref{eq:EqXk} into two parts, one of them containing precisely those summands which carry the term $X^kf$. There are $m+1$ of them, and so
	\begin{equation}\label{eq:equationXkf}
	X^kf=\sum_{\substack{\vec{k}\in K}}c_{\vec{k}}\,\Mop(X^{k_1}f,\dotsc,X^{k_{m+1}}f)
	+(m+1)\Mop(f,\dotsc,f,X^{k}f),
	\end{equation}
	where $(k_1,\dotsc,k_{m+1})\in K$ if and only if $k_{j}<k$, for every $1\leq j\leq m+1$, and $k_1+\dots+k_{m+1}=k$. 
	The first term on the right-hand side of \eqref{eq:equationXkf} can be easily bounded in $\mathcal H^{1+\gamma}$ with \eqref{eq:HsboundforM}, yielding
	\[ \sum_{\substack{\vec{k}\in K}}c_{\vec{k}}\norma{\Mop(X^{k_1}f,\dotsc,X^{k_{m+1}}f)}_{\mathcal H^{1+\gamma}}\lesssim \norma{f}_{\mathcal H^s}^{m+1}. \]
	To handle the second term, let $\eps\in(0,1)$, and decompose $f=\vphi_0+\vphi_1$, $X^kf=\psi_{0}+\psi_{1}$, with $\vphi_{1},\psi_{1}\in C^\infty(\sph{d-1})$ and $\norma{\vphi_0}_{L^2}<\eps\norma{f}_{L^2},\norma{\psi_{0}}_{L^2}<\eps\norma{X^kf}_{L^2}$. Since $f\in\mathcal H^s$, we have that $\vphi_0\in\mathcal H^s$ and $\psi_{0}\in\mathcal H^\gamma$. Now take $\delta\in(0,1)$ satisfying $\delta\leq \min\{\gamma,\alpha\}$;
	recall that $\gamma=s-\lfloor s\rfloor$, and that $\alpha$ was chosen immediately following \eqref{eq:ineqSecondDiff}. The equation satisfied by $\psi_{0}$ may be derived from \eqref{eq:equationXkf}. 
	Applying $(\Theta-I)^2$ to both sides of that equation, and invoking Lemma \ref{lem:forBootstraping}, we find that, if $\eps>0$ is small enough, then
	\begin{align*}
	\norma{(\Theta-I)^2&\psi_{0}}_{L^2(\sph{d-1})}\lesssim \sum_{\substack{\vec{k}\in K}}c_{\vec{k}}\norma{(\Theta-I)^2\Mop(X^{k_1}f,\dotsc,X^{k_{m+1}}f)}_{L^2(\sph{d-1})}\\
	&\quad+(\norma{(\Theta-I)^2\vphi_0}_{L^2(\sph{d-1})}+\norma{(\Theta-I)^2\vphi_1}_{L^2(\sph{d-1})})\norma{X^kf}_{L^2(\sph{d-1})}\\
	&\quad+(\norma{(\Theta-I)\vphi_0}_{L^2(\sph{d-1})}+\norma{(\Theta-I)\vphi_1}_{L^2(\sph{d-1})})^2\norma{X^kf}_{L^2(\sph{d-1})}\\
	&\quad+(\norma{(\Theta-I)\vphi_0}_{L^2(\sph{d-1})}+\norma{(\Theta-I)\vphi_1}_{L^2(\sph{d-1})})\norma{(\Theta-I)\psi_{0}}_{L^2(\sph{d-1})}\\
	&\quad+(\norma{(\Theta-I)\vphi_0}_{L^2(\sph{d-1})}+\norma{(\Theta-I)\vphi_1}_{L^2(\sph{d-1})})\norma{(\Theta-I)\psi_{1}}_{L^2(\sph{d-1})}\\
	&\quad+C_\eps\ab{t}^\delta \norma{(\Theta-I)\psi_{0}}_{L^2(\sph{d-1})}+\norma{(\Theta-I)^2\psi_{1}}_{L^2(\sph{d-1})}.
	\end{align*}
	Consequently, by means of second differences, we obtain
	\[ \sup_{0<\ab{t}\leq 1}\ab{t}^{-(\delta+\gamma)}\norma{(\Theta-I)^2\psi_{0}}_{L^2(\sph{d-1})}<\infty, \]
	and as a result $\psi_{0}\in \mathcal H^{\gamma+\delta}$. It follows that $X^kf\in\mathcal H^{\gamma+\delta}$ and, since  $X\in\{X_{i,j}: 1\leq i<j\leq d\}$ was arbitrary,\footnote{Again, if $s+\delta\in\Z$, then the conclusion is that $f\in \mathcal H^t$, for every $t\in[0,s+\delta]\setminus\Z$.} $f\in\mathcal H^{s+\delta}$.
	The proof of the proposition is now complete.
\end{proof}

\subsection{Second differences}\label{subseq:secondDifferences}
Given $s\in(0,2)$, we define the space $\mathscr{H}^s=\mathscr{H}^s(\sph{d-1})$ of all functions $f\in L^2(\mathbb{S}^{d-1})$, for which the norm 
\begin{equation}\label{eq:normHHs} 
\norma{f}_{\mathscr{H}^s}=\norma{f}_{L^2(\sph{d-1})}+\sum_{1\leq i<j\leq d}\sup_{\ab{t}\leq 1}\ab{t}^{-s}\norma{(e^{tX_{i,j}}-I)^2f}_{L^2(\mathbb{S}^{d-1})} 
\end{equation}
is finite.
We see that
\[(e^{tX_{i,j}}-I)^2f=f\circ e^{2tX_{i,j}}-2f\circ e^{tX_{i,j}}+f\]
resembles a second difference of $f$. From the definition, it is immediate that  $\norma{f}_{\mathscr{H}^s}\leq 2\norma{f}_{\mathcal{H}^s}$ provided $s\in(0,1)$, and so  $\mathcal H^s\subseteq\mathscr{H}^s$. The reverse inclusion also holds.
Moreover, if $s\in (0,2)\setminus\{1\}$, then $\mathcal H^s=\mathscr{H}^s$, and the two norms given by \eqref{eq:mathcalHsNormBig} and \eqref{eq:normHHs} are equivalent.
These assertions have all appeared in the literature;
in what follows, we provide precise references.

Let us discuss the Euclidean case first. 
Given $s\in(0,1)$, we defined the H\"older space $\Lambda_s(\R^d)$ to contain precisely those functions $f:\R^d\to\Co$ for which the norm 
\[ \norma{f}_{L^\infty(\R^d)}+\sup_{\ab{t}>0}\ab{t}^{-s}\norma{f(x+t)-f(x)}_{L_x^\infty(\R^d)}\]
is finite, whereas for $s=k+\delta$, $1\leq k\in\N$, $\delta\in (0,1)$, we have that $f\in \Lambda_s(\R^d)$ if $f\in C^k(\R^d)$ and $\partial^{\alpha}f\in \Lambda_\delta(\R^d)$, for all multi-indices $\alpha\in\N_0^d$ with $\ab{\alpha}=k$.
Given $s\in(0,2)$, consider the norm (defined in terms of second differences),
\[ \norma{f}_{L^\infty(\R^d)}+\sup_{\ab{t}>0}\ab{t}^{-s}\norma{f(x+2t)-2f(x+t)+f(x)}_{L_x^\infty(\R^d)}, \]
and the corresponding space of functions for which the latter norm is finite. 
These two spaces coincide if $s\in(0,2)\setminus\{1\}$, as dictated by the classical equivalence between  H\"older and Zygmund spaces, the latter being defined through higher differences; precise references  include \cite[Ch.\@ V, Prop.\@ 8]{St70} and \cite[Ch.\@ 2, \S 2.6]{Tr92}. More generally, one may consider an $L^p$-norm in $x$, $1\leq p\leq \infty$, and possibly an additional $L^q$-norm in $t$, $1\leq q\leq\infty$;
see \cite[Ch.\@ V, Prop.\@ 8']{St70} and \cite[Ch.\@ 2, \S 2.6]{Tr92}.

For the case of  the unit sphere $\sph{d-1}$, the equivalence between the $\mathcal H^s$- and the $\mathscr H^s$-norms, and therefore the equality of the two corresponding spaces, can be found in \cite{Gr74,Gr77}. These works rely on harmonic extensions, in a similar spirit to the aforementioned chapter in \cite{St70}. Of particular relevance are Propositions 4.1 and 4.3 in \cite{Gr74}, and Proposition 1.8 in \cite{Gr77}. In the former article \cite{Gr74}, the function space $\Lambda(\alpha;p,q)$ is defined for $\alpha>0$, $1\leq p,q\leq\infty$, and shown to be equivalent to a variant thereof using first- and second-order differences; the special case $(p,q)=(2,\infty)$ and $\alpha=s\in(0,1)$ of this equivalence is used to establish that the spaces $\mathcal H^s$ and $\mathscr H^s$ coincide whenever $s\in(0,1)$. In the latter article \cite{Gr77}, spaces of index $\alpha=k+\gamma$, $k\in\N$, are related to those of index $\gamma$ in a precise way; in turn, this is used to establish the equivalence between the spaces $\mathcal H^s$ and $\mathscr H^s$ whenever $s\in(1,2)$. It should be pointed out that the norms in terms of first and second differences considered in \cite{Gr74} are slightly different from the ones which we are using to define $\mathcal H^s$ and $\mathscr H^s$. However, the norms are seen to be equivalent; see \cite[Cor.\@ 3.11]{DX10}. See also \cite{Co83} and \cite[Theorems 3.1 and 3.3]{LR94}

We proceed to describe an alternative approach to the equivalence discussed in the previous paragraph which perhaps requires less effort from the unfamiliar reader. 

Firstly, the equivalence between $\mathcal{H}^s$ and $\mathscr{H}^s$ when $s\in(0,1)$ follows directly from the combinatorial proof of \cite[Lemma 1.1]{Her68}, stated in \cite{Her68} for the case of $\R^d$. For the convenience of the reader, we provide a brief sketch of the argument. As mentioned already,  the estimate $\norma{f}_{\mathscr{H}^s}\leq 2\norma{f}_{\mathcal{H}^s}$ follows easily from the definitions. For the reverse inequality, consider the following identity, which is valid for every $t\in\R$, $X\in\{X_{i,j}:1\leq i<j\leq d \}$, and $m\in\N$:
\[ 2^m(e^{tX}-I)=(e^{2^mtX}-I)-\sum_{i=0}^{m-1}2^{m-1-i}(e^{2^itX}-I)^2. \]
Applying this operator to a function $f\in\mathscr{H}^s$, 
taking the $L^2(\sph{d-1})$-norm on both sides, and invoking the triangle inequality, 
 yields 
\begin{align*}
2^m\norma{(e^{tX}-I)f}_{L^2(\sph{d-1})}&\leq \norma{(e^{2^{m}tX}-I)f}_{L^2(\sph{d-1})}
+\sum_{i=0}^{m-1}2^{m-1-i}\norma{(e^{2^itX}-I)^2f}_{L^2(\sph{d-1})}.
\end{align*}
Dividing by $2^m$, using that $\norma{(e^{2^{m}tX}-I)f}_{L^2(\sph{d-1})}\leq 2\norma{f}_{L^2(\sph{d-1})}\leq 2\norma{f}_{\mathscr{H}^2}$,
 and letting $m\to\infty$, we then obtain
\begin{equation*}
\norma{(e^{tX}-I)f}_{L^2(\sph{d-1})}\leq \frac{1}{2}\sum_{i=0}^\infty 2^{-i} \norma{(e^{2^itX}-I)^2f}_{L^2(\sph{d-1})}. 
\end{equation*}
Multiplying by $\ab{t}^{-s}$ and taking the supremum over $t\in\R$ shows that, when $s<1$, the following holds:
\begin{align}
\sup_{t\in\R}\ab{t}^{-s}\norma{(e^{tX}-I)f}_{L^2(\sph{d-1})}&\leq \frac{1}{2}\biggl(\sum_{i=0}^\infty 2^{-i(1-s)}\biggr) \sup_{t\in\R}\ab{t}^{-s}\norma{(e^{tX}-I)^2f}_{L^2(\sph{d-1})}\nonumber \\
\label{eq:ineqSingleDouble}
&=\frac{1}{2(1-2^{-(1-s)})}\sup_{t\in\R}\ab{t}^{-s}\norma{(e^{tX}-I)^2f}_{L^2(\sph{d-1})}.
\end{align}
On the other hand, 
by $2\pi$-periodicity of $e^{tX}$ and $SO(d)$-invariance of $\sigma_{d-1}$, \[\sup_{t\in\R}\ab{t}^{-s}\norma{(e^{tX}-I)^kf}_{L^2(\sph{d-1})}\simeq \sup_{\ab{t}<1}\ab{t}^{-s}\norma{(e^{tX}-I)^kf}_{L^2(\sph{d-1})}\]
for $k\in\{1,2\}$.
This together with \eqref{eq:ineqSingleDouble} yields $\norma{f}_{\mathcal{H}^s}\leq C\norma{f}_{\mathscr{H}^s}$, for some $C<\infty$.

Secondly, the equivalence between  $\mathcal H^s$ and $\mathscr H^s$ when $s\in(1,2)$ can be obtained via the techniques in \cite[\S 3]{DX11} (especially Theorem 3.6) and \cite[\S 2.3]{DX10}, which rely on the modulus of smoothness and Marchaud-type inequalities. Indeed, the equivalence of the norms $\norma{\cdot}_{\mathcal{W}_p^{r,\alpha}}$ and $\norma{\cdot}_{H_p^{r+\alpha}}$ given by \cite[Theorem 3.6]{DX11} provides the answer after specializing to $(\ell,r\,p,\alpha)=(1,1,2,s-1)$, for in this case
$\norma{f}_{\mathcal{H}^s}\simeq \norma{f}_{\mathcal{W}_p^{r,\alpha}}+\norma{f}_{\mathcal{H}^{s-1}}$, $\norma{f}_{H_p^{r+\alpha}}\simeq \norma{f}_{\mathscr{H}^s}$ and, as already remarked, $\norma{f}_{\mathcal{H}^{s-1}}\simeq \norma{f}_{\mathscr{H}^{s-1}}\lesssim\norma{f}_{\mathscr{H}^{s}}$. The argument is straightforward but lengthy; thus the reader is directed to the aforementioned references.

\subsection{One final remark}\label{sec:finalrmk}
Our proof of Theorem \ref{thm:smoothnessTheorem} does not in general handle the case when $\la=0$ in \eqref{eq:generalEL}.
An exception corresponds to the case when $m=2k$ is an even integer, $\vec k\in\{0,1\}^{m+1}$ satisfies $k_1+\dots+k_{m+1}=k-1$,
and $a>0$ on $\sph{d-1}$ (or, more generally, $a=0$ on a set of $\sigma_{d-1}$-measure zero), 
which corresponds to the Euler--Lagrange equation \eqref{eq:simpleEL} with $\la=0$. In this case, by multiplying both sides of \eqref{eq:generalEL} by $\overline{f}$ and integrating over $\sph{d-1}$, one concludes that $\norma{\widehat{f\sigma}_{d-1}}_{L^{m+2}(\R^d)}=0$, which clearly forces $f=\mathbf{0}$. 
It remains unclear whether one should expect general solutions of \eqref{eq:generalEL} to be smooth when $\la=0$.

\section*{Acknowledgements}
D.O.S.\@ is supported by the EPSRC New Investigator Award ``Sharp Fourier Restriction Theory'', grant no.\@ EP/T001364/1,
and is grateful to
Michael Christ, Giuseppe Negro, and Po-Lam Yung
for valuable discussions during the preparation of this work.
The authors thank the anonymous referees for a careful reading of the manuscript and valuable suggestions.

\end{document}